\documentclass[a4paper,12pt]{article} 
\usepackage{natbib}
\usepackage[finnish,english]{babel}
\usepackage[utf8]{inputenc}
\usepackage[T1]{fontenc}
\usepackage{lmodern}
\usepackage{amsmath} 
\usepackage{amsthm}
\usepackage{amssymb}
\usepackage{bm}
\usepackage{mathtools}
\usepackage{fancyhdr}
\usepackage{mathrsfs}
\usepackage{graphicx} 
%\usepackage[colorlinks=true,linkcolor=black,citecolor=black]{hyperref}
%\usepackage{hyperref}
%\selectlanguage{english}

% In the preamble you must have
% \usepackage[finnish]{babel}
% for the finnsih versions, or
% \usepackage[finnish,english]{babel}
% for the english versions.

% Proclamations

% Add
% \numberwithin{theoremaux}{section}
% to the preamble to number Theorems etc.
% within a section.

% Suppress QED symbol in proof

% Theorem
\theoremstyle{plain}

\newtheorem{theoremaux}{Theorem}
\newenvironment{theorem}{\medskip\begin{theoremaux}}{\end{theoremaux}}

% Proposition

\newtheorem{propositionaux}[theoremaux]{Proposition}

% Lemma

\newtheorem{lemmaaux}[theoremaux]{Lemma}
\newenvironment{lemma}{\medskip\begin{lemmaaux}}{\end{lemmaaux}}

% Corollary

\newtheorem{corollaryaux}[theoremaux]{Corollary}
\newenvironment{corollary}{\medskip\begin{corollaryaux}}{\end{corollaryaux}}

% Conjecture

\newtheorem{conjectureaux}[theoremaux]{Conjecture}

% Assumption

\newtheorem{assumptionaux}[theoremaux]{Assumption}
\newenvironment{assumption}{\medskip\begin{assumptionaux}}{\end{assumptionaux}}

% Definition
\theoremstyle{definition}

\newtheorem{definitionaux}[theoremaux]{Definition}
  % End of Definition

% Example

\newtheorem{exampleaux}[theoremaux]{Example}
  % End of Example

\newenvironment{example}{\medskip\begin{exampleaux}}{\end{exampleaux}\medskip}

\newtheorem{cexampleaux}[theoremaux]{Counterexample}

% Exercise

\newtheorem{exerciseaux}{Exercise}

% Problem

%\newtheorem{problemaux}{Problem}
%\newenvironment{problem}[1][]{\medskip\begin{problemaux}[#1] \setcounter{equation}{0} \renewcommand{\theequation}{\theproblemaux.\arabic{equation}}}{\end{problemaux}}

\newtheorem{problemaux}{Problem}

% Remark, Hint, Case

\theoremstyle{remark}

\input{CustomCommands}
\usepackage[left=1in,top=1in,bottom=1.5in,right=1in]{geometry}

%\catcode `‰=13 \catcode `ƒ=13 \catcode `ˆ=13 \catcode `÷=13 \catcode `Â=13 \catcode `≈=13
%\def ‰{\"a} \def ƒ{\"A} \def ˆ{\"o} \def ÷{\"O} \def Â{\aa} \def ≈{\AA}

\selectlanguage{english}

\pagestyle{fancy}

\renewcommand{\subsectionmark}[1]{} 
\fancyhead{}
%\fancyhead[RO,LE]{\textup{\today}\hspace{2em}\textup{\thepage}}
%\fancyhead[RE,LO]{\textup{\today}\hspace{2em}\textup{\thepage}}
%\markboth{\textup{\today}\hspace{2em}\textup{\thepage}}{\thesection}
\lhead{\nouppercase{\rightmark}} 
\rhead{\textup{\thepage}}
\fancyfoot{}

\theoremstyle{definition}

\newcommand{\Omi}{\mathcal{O}}

\newcommand{\Igw}{I_A} % The index set for $i\gw_k$

\newcommand{\Y}{\C^p}

\newcommand{\Ma}{M_{\tilde{\ga}/\ga}}
\newcommand{\Mb}{M_{\tilde{\gb}/\gb}}
\newcommand{\Mc}{M_{\tilde{\gg}/\gg}}

%\newcommand{\pmat}[1]{\begin{pmatrix}#1\end{pmatrix}}

%  \newcommand{\eps}{\varepsilon}
%\newcommand{\keyterm}[1]{\textit{#1}}

%\theoremstyle{theorem}
%\newtheorem{theorem}{Theorem}

%use empty lines between paragraphs rather than indentation
%\setlength{\parindent}{0pt} \setlength{\parskip}{2ex}

% NATBIB stuff
\bibpunct{[}{]}{,}{n}{}{,}

\title{Robustness of Strong Stability of Semigroups}
\author{Lassi Paunonen\thanks{Tampere University of Technology, PO.Box 553, 33101 Tampere, Finland, \texttt{lassi.paunonen@tut.fi}}}
\date{~}

\begin{document}

\maketitle
\vspace{-8ex}

\thispagestyle{plain}

\begin{abstract}
In this paper we study the preservation of strong stability of strongly continuous semigroups on Hilbert spaces. In particular, we study a situation where the generator of the semigroup has a finite number of spectral points on the imaginary axis and the norm of its resolvent operator is polynomially bounded near these points. We characterize classes of finite rank perturbations preserving the strong stability of the semigroup. In addition, we improve recent results on preservation of polynomial stability of a semigroup under finite rank perturbations of its generator. Theoretic results are illustrated with an example where we consider the preservation of the strong stability of a multiplication semigroup.
\end{abstract}

\smallskip

{\small
\noindent\textbf{Keywords:} Strongly continuous semigroup, strong stability, polynomial stability, robustness.
}

\section{Introduction}

It is a well-known fact that the exponential stability of a strongly continuous semigroup $T(t)$ is preserved under all sufficiently small perturbations of its infinitesimal generator~$A$. However, in a situation where $T(t)$ is not exponentially stable, but merely \keyterm{strongly stable}, i.e.,
\eq{
\lim_{t\rightarrow \infty}\, \norm{T(t)x} =0,
\qquad \forall x\in X,
} 
no general conditions for the preservation the stability of $T(t)$ are known.
On the contrary, it is acknowledged that strong stability may be extremely sensitive to even arbitrarily small perturbations
of its infinitesimal generator. 

Recently in~\cite{Pau12,Pau13a} it was shown that a subclass of strongly stable semigroups, the so-called \keyterm{polynomially stable semigroups}, do indeed possess good robustness properties. The key observation was that in the case of polynomial stability, the size of the perturbation $A+BC$ should not be measured using the regular operator norms $\norm{B}$ and $\norm{C}$, but instead using graph norms $\norm{(-A)^\gb B}$ and $\norm{(-A^\ast)^\gg C^\ast}$ for suitable exponents $\gb$ and $\gg$.
The polynomially stable semigroups have a 
characteristic property that their generators have no spectrum on the imaginary axis $i\R$. Therefore, many of the strongly stable semigroups encountered in applications are beyond the scope of the perturbation results in~\cite{Pau12,Pau13a}.
In this paper we study the
robustness properties of semigroups whose generators do have spectrum on the imaginary axis. 
In particular, we consider a situation where $A$ has a finite number of spectral points on the imaginary axis, and the norm of the resolvent operator of $A$ is polynomially bounded near these points. 
%It turns out
We show
that the semigroups of this type have surprising robustness properties.

The results presented in this paper again demonstrate that for a strongly stable semigroup $T(t)$, the size of the perturbation should not be measured using the regular operator norm, but instead using suitable graph norms related to the generator $A$. 
Our main results reveal large and easily characterizable classes of finite rank perturbations that preserve the strong stability of $T(t)$.
The results can be applied, for example, in the study of linear partial differential equations, and in the control of infinite-dimensional linear systems.

To the author's knowledge, robustness properties of strong stability of semigroups with spectrum on the imaginary axis have not been studied previously in the literature. Some results on preservation of strong stability of compact semigroups can be found in~\cite{caramanrobstrongstab}.
However, any strongly stable compact semigroup is actually exponentially stable~\cite[Ex. V.1.6(4)]{engelnagel}.
%However, a strongly stable compact semigroup is always exponentially stable.
%Very few results on strong stability of semigroups can be found in the literature, and to the author's knowledge, the 
%The preservation of strong stability of semigroups has been studied previously by Caraman~\cite{caramanrobstrongstab}, who considered preservation of the strong stability of a compact semigroup $T(t)$. 
%However, a strongly stable compact semigroup is always exponentially stable.
%for a compact semigroup the strong stability implies exponential stability.
%under bounded perturbations commuting with the generator $A$.

To illustrate our conditions for the preservation of stability, 
we begin by stating our main result in a situation where
%$T(t)$ is strongly stable and
$A$ has a single imaginary spectral point $\gs(A)\cap i\R = \set{0}$ belonging to the continuous spectrum of $A$. 
We further assume that there exists $\ga\geq 1$ 
  such that 
  \eq{
  \sup_{0< \abs{\gw}\leq 1} \abs{\gw}^\ga \norm{R(i\gw,A)}<\infty
  } 
  and $\sup_{\abs{\gw}>1} \norm{R(i\gw,A)}<\infty$. These assumptions are satisfied, for example, if $A$ generates a strongly stable analytic semigroup with $0\in\gs_c(A)$, and $\abs{\gl}\norm{R(\gl,A)}\leq M$ outside some sector in $\C^-$.
  Since $A$ is injective and $\ran(A)$ is dense, the operator $-A$ has a densely defined inverse $(-A)\inv$. Furthermore, $(-A)\inv$ and $(-A^\ast)\inv$ are sectorial operators, and thus for $\gb,\gg\geq 0$ the fractional powers $(-A)^{-\gb}: \ran( (-A)^{\gb})\subset X\rightarrow X$ and $(-A^\ast)^{-\gg}: \ran( (-A^\ast)^{\gg})\subset X\rightarrow X$ are well-defined.
We 
%can therefore
consider finite rank perturbations of the form $A+BC$, where $B\in \Lin(\C^p,X)$, and $C\in \Lin(X,\C^p)$ satisfy
%\begin{subequations}
  \eqn{
  \label{eq:pertcondintro}
  \ran(B) \subset \ran( (-A)^\gb),
%  = \Dom( (-A)^{-\gb}) , \\
  \qquad
  \ran(C^\ast) \subset \ran( (-A^\ast)^\gg) 
%  = \Dom( (-A^\ast)^{-\gg})
  }
%\end{subequations}
for some $\gb,\gg\geq 0$.
We choose to measure the size of the perturbation $BC$ 
using the graph norms 
$\norm{B} + \norm{(-A)^{-\gb}B}$ and 
$\norm{C} + \norm{(-A^\ast)^{-\gg}C^\ast}$. Theorem~\ref{thm:stabpertintro} shows that
this is exactly the right choice for the purposes of studying the preservation of the strong stability of $T(t)$. 

\begin{theorem} 
  \label{thm:stabpertintro}
  Let $\gb+\gg=\ga$. There exists $\gd>0$ such that if $B$ and $C$ satisfy~\eqref{eq:pertcondintro} and 
\eq{
\norm{B}+\norm{(-A)^{-\gb}B}<\gd, \quad \mbox{and} \quad \norm{C}+ \norm{(-A^\ast)^{-\gg} C^\ast}<\gd,
}
  then the semigroup generated by $A+BC$ is strongly stable.
\end{theorem}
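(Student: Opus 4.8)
The plan is to verify the hypotheses of the Arendt--Batty--Lyubich--Vu theorem for the semigroup $S(t)$ generated by $A+BC$: that $S(t)$ is bounded, that $\gs(A+BC)\cap i\R$ is countable, and that $(A+BC)^\ast$ has no eigenvalues on $i\R$. Since $BC$ is a bounded finite rank operator, $A+BC$ certainly generates a $C_0$-semigroup, so the real content lies in locating the spectrum, excluding boundary eigenvalues, and establishing the uniform boundedness of $S(t)$.

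The central tool is the finite rank resolvent identity
\eqn{
R(\gl,A+BC) = R(\gl,A) + R(\gl,A)B\bigl[I - CR(\gl,A)B\bigr]\inv CR(\gl,A),
}
valid for $\gl\in\rho(A)$ whenever the $p\times p$ matrix $I-CR(\gl,A)B$ is invertible. Hence a point $i\gw$ with $\gw\neq 0$ lies in $\rho(A+BC)$ as soon as $\det\bigl(I-CR(i\gw,A)B\bigr)\neq 0$, for which $\norm{CR(i\gw,A)B}<1$ suffices. The heart of the proof is therefore the uniform estimate $\sup_{\gw\neq 0}\norm{CR(i\gw,A)B}<1$, and this is exactly where the two graph norms enter. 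Writing $\wt B=(-A)^{-\gb}B$ and $\wt C^\ast=(-A^\ast)^{-\gg}C^\ast$, so that $B=(-A)^\gb\wt B$ and $C=\wt C(-A)^\gg$ with all of $\norm{\wt B},\norm{\wt C},\norm{B},\norm{C}$ bounded by $\gd$, I would split the imaginary axis into two regimes. For $\abs{\gw}>1$ the resolvent is uniformly bounded by hypothesis, so $\norm{CR(i\gw,A)B}\le \norm{C}\,\norm{R(i\gw,A)}\,\norm{B}\le M\gd^2$ directly, using only $\norm{B},\norm{C}$. For $0<\abs{\gw}\le 1$ one instead uses the factorization $CR(i\gw,A)B=\wt C\,(-A)^\gg R(i\gw,A)(-A)^\gb\,\wt B$ together with the bound $\norm{R(i\gw,A)}\le M\abs{\gw}^{-\ga}$; since $\gb+\gg=\ga$, the two fractional powers compensate exactly the blow-up $\abs{\gw}^{-\ga}$, and, applied to the fixed finite rank operators $\wt B$ and $\wt C$, keep this contribution bounded by $C\gd^2$ uniformly in $\gw$. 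Choosing $\gd$ small then forces $\sup_{\gw\neq0}\norm{CR(i\gw,A)B}<1$.

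Carrying out the same estimate for $\re\gl>0$, where $\norm{R(\gl,A)}\le M/\re\gl$, shows $I-CR(\gl,A)B$ is invertible throughout the open right half-plane, so $\gs(A+BC)\subset\closure{\C^-}$ and $\gs(A+BC)\cap i\R\subseteq\set{0}$; in particular the imaginary spectrum is finite, hence countable. To rule out $0$ as an eigenvalue of $(A+BC)^\ast$ (and of $A+BC$), I would argue directly from the resolvent identity and the fact that $0$ lies in the continuous spectrum of $A$: an imaginary eigenvector would, through $I-CR(i\gw,A)B$, force a nontrivial relation that the range conditions~\eqref{eq:pertcondintro} and the injectivity of $A$ and $A^\ast$ exclude. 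With boundedness, countability of $\gs(A+BC)\cap i\R$, and the absence of imaginary adjoint eigenvalues in hand, the Arendt--Batty--Lyubich--Vu theorem yields strong stability.

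The step I expect to be the main obstacle is the boundedness of $S(t)$: uniform boundedness of $R(\cdot,A+BC)$ on $i\R\setminus\set{0}$ does not by itself imply a bounded semigroup, so I would pass to the time domain. Writing the perturbed orbit by variation of parameters and setting $z(t)=CS(t)x$ gives the convolution equation $z = f + k\ast z$ with $f(t)=CT(t)x$ and $k(t)=CT(t)B$, whose Laplace symbol is precisely $I-CR(i\gw,A)B$. On a Hilbert space the uniform invertibility of this symbol established above, combined with the $L^2$-admissibility of the finite rank $B$ and $C$ that the range conditions guarantee, lets one control $z$ in $L^2(0,\infty;\C^p)$ by Plancherel and thereby bound $S(t)x$ uniformly in $t$. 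Making these admissibility estimates rigorous near $\gw=0$, where the domains of the fractional powers must be tracked carefully, is the delicate part; everything else follows fairly mechanically from the resolvent identity and the exponent balance $\gb+\gg=\ga$.
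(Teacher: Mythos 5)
Your overall architecture --- the Sherman--Morrison--Woodbury identity for the spectrum, a uniform bound on $\norm{CR(\gl,A)B}$ obtained by splitting $i\R$ into a neighbourhood of $0$ and its complement, exclusion of $0$ as an eigenvalue via the range conditions, and then Arendt--Batty--Lyubich--V\~{u} --- matches the paper's Sections~3--4. But two steps do not go through as written. First, at what you call the heart of the proof you factor $CR(i\gw,A)B=\wt{C}\,(-A)^{\gg}R(i\gw,A)(-A)^{\gb}\,\wt{B}$ and assert that the middle operator compensates the blow-up $\abs{\gw}^{-\ga}$ exactly. However, $(-A)^{\ga}R(i\gw,A)$ is not a bounded operator once $\ga>1$: already $(-A)^{1+\tilde{\ga}}R(i\gw,A)=(-A)^{\tilde{\ga}}-i\gw(-A)^{\tilde{\ga}}R(i\gw,A)$ contains the unbounded summand $(-A)^{\tilde{\ga}}$, and the theorem must cover all $\ga\geq1$ (the multiplication example of Section~\ref{sec:multex} has $\ga=2$). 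The compensation therefore cannot be read off from a single three-factor norm estimate; one has to peel off integer powers one at a time via $(-A)R_\gl=I-\gl R_\gl$, control the resulting cross terms $\norm{CB_l}$ and $\norm{B_m}\norm{\tilde{C}_l}$ with the Moment Inequality, and only then apply a fractional power $\tilde{\ga}<1$ to the resolvent --- this is the content of Lemmas~\ref{lem:Okbdd}--\ref{lem:ARboundzero} and is where most of the work in the spectral part actually lies. Similarly, ruling out $0\in\gs_p(A+BC)$ requires the explicit factorization $(-A)^{\gb_1}(I-(-A)^{-\gb_1}BC(-A)^{-\gg_1})(-A)^{\gg_1}\phi=0$ with $\gb_1+\gg_1=1$ as in Lemma~\ref{lem:ABCinj}; your appeal to ``a nontrivial relation that the range conditions exclude'' is the right idea but not yet an argument.

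Second, and more seriously, the uniform boundedness step. Your time-domain route closes the convolution equation $z=f+k\ast z$ with $z=CS(\cdot)x$ only if $f=CT(\cdot)x\in L^2(0,\infty;\C^p)$ with $\norm{f}_{L^2}\lesssim\norm{x}$ and if $B$ admits an infinite-time $L^2$-admissibility estimate for the input map. Neither is available for a merely bounded semigroup with bounded finite-rank $B$ and $C$: by Plancherel, infinite-time admissibility of $\iprod{\cdot}{c}$ amounts to $\sup_{\xi>0}\int_{\R}\abs{\iprod{R(\xi+i\eta,A)x}{c}}^2\,d\eta<\infty$ \emph{without} a factor of $\xi$, whereas boundedness of $T(t)$ only yields the $\xi$-weighted bound $\sup_{\xi>0}\xi\int_{\R}\norm{R(\xi+i\eta,A)x}^2\,d\eta<\infty$. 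The range conditions~\eqref{eq:pertcondintro} govern behaviour near $\gw=0$ and do nothing to supply the missing factor of $\xi$ globally, so the admissibility you invoke is not ``guaranteed'' --- it fails in general. The paper sidesteps this entirely by staying in the frequency domain: uniform boundedness of the perturbed semigroup is verified through Gomilko's $\xi$-weighted resolvent criterion (Theorem~\ref{thm:unifbddconds}), for which the per-vector integrals are automatically finite (Lemma~\ref{lem:BCfinrankint}), and everything reduces to $\sup_{\xi>0}\xi\int_{\R}\norm{R(\xi+i\eta,A)B}^2\norm{CR(\xi+i\eta,A)}^2\,d\eta<\infty$, proved by distributing the weights with the Moment Inequality and H\"older's inequality (Lemma~\ref{lem:RBCRest}). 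Unless you can derive the admissibility estimates from some additional hypothesis, the time-domain argument does not close, and this is a genuine gap rather than a routine technicality.
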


In Section~\ref{sec:mainresults} we state Theorem~\ref{thm:stabpertintro} in a more general situation where $\gs(A)\cap i\R = \set{i\gw_k}_{k\in\Igw}$ for a finite set $\Igw$ of indices. In this case, the preservation of stability requires that for all $k\in\Igw$ the graph norms
$\norm{B} + \norm{(i\gw_k-A)^{-\gb}B}$ and 
$\norm{C} + \norm{(-i\gw_k-A^\ast)^{-\gg}C^\ast}$ are sufficiently small.

In addition to studying the preservation of strong stability, we also improve the results concerning robustness of polynomial stability presented in~\cite{Pau12,Pau13a}. In these references it was shown that the polynomial stability of a semigroup generated by $A$ is preserved under a finite rank perturbation $A+BC$ if for some $\gb,\gg\geq 0$ satisfying $\gb+\gg\geq \ga$ we have
\eqn{
\label{eq:BCpolstabrancondintro}
  \ran(B)\subset \Dom( (-A)^\gb), \quad \mbox{and} \quad \ran(C^\ast)\subset \Dom( (-A^\ast)^\gg) ,
}
and if the graph norms $\norm{(-A)^\gb B}$ and $\norm{(-A^\ast)^\gg C^\ast}$ are small enough. 
However, in these results one of the associated exponents $\gb,\gg\geq 0$ was required to an integer, or alternatively, larger than or equal to $\ga$. 
The techniques used in this paper allow us to remove these restrictions on the exponents. In particular, we show that for arbitrary exponents $\gb,\gg\geq 0$ satisfying $\gb+\gg\geq \ga$ the polynomial stability of a semigroup generated by $A$ is preserved provided that the perturbation satisfies~\eqref{eq:BCpolstabrancondintro} and the corresponding graph norms are small enough.

The paper is organized as follows. In Section~\ref{sec:mainresults} we state our main results on the preservation of strong and polynomial stability. The result on robustness of strong stability is proved in parts throughout Sections~\ref{sec:specpert} and~\ref{sec:stabpert}. Section~\ref{sec:polpert} contains the proof of the result on the preservation of polynomial stability. In Section~\ref{sec:multex} we illustrate the theoretic results with an example. In particular, we study the robustness properties of a strongly stable multiplication semigroup. Section~\ref{sec:conclusions} contains concluding remarks.

We conclude this section by applying Theorem~\ref{thm:stabpertintro} to study the preservation of the strong stability of a semigroup generated by a bounded diagonal operator.

\begin{example}
  Let $X=\lp[2](\C)$ and define $A\in \Lin(X)$ by
  \eq{
  A= \sum_{k=1}^\infty -\frac{1}{k} \iprod{\cdot}{e_k}e_k
  }
  where $e_k$ are the natural basis vectors. The operator generates a strongly stable semigroup $T(t)$ and satisfies $\gs(A)\cap i\R = \set{0}\subset \gs_c(A)$. Since for $\gw\neq 0$ we have $\norm{R(i\gw,A)} = \dist(i\gw,\gs(A))^{-1} = \abs{\gw}^{-1}$, the assumptions of Theorem~\ref{thm:stabpertintro} are satisfied for $\ga=1$.
  The operator $-A$ has an unbounded self-adjoint inverse, and for $\gb\geq 0$ its fractional powers 
%  $(-A)^{-\gb}$
are given by
  \eq{
  (-A)^{-\gb} x = \sum_{k=1}^\infty k^\gb \iprod{x}{e_k}e_k, 
  \qquad 
  x\in
%  \Dom( (-A)^{-\gb})
  \ran( (-A)^{\gb})
  = \Bigl\{x\in X \,\Bigm|\,\sum_{k=1}^\infty k^{2\gb} \abs{\iprod{x}{e_k}}^2<\infty\Bigr\}.
%  (-A)\inv x = \sum_{k=1}^\infty k \iprod{x}{e_k}e_k, 
%  \qquad 
%  x\in \Dom( (-A)\inv) = \Setm{x\in X}{\sum_{k=1}^\infty k^2 \abs{\iprod{x}{e_k}}^2<\infty}.
  }
%  For $\gb\geq 0$ we have 
  If we consider a rank one perturbation $A + \iprod{\cdot}{c}b$ with $b,c\in X$, then Theorem~\ref{thm:stabpertintro} in particular states that the semigroup generated by the perturbed operator is strongly stable if $\norm{b}$ and $\norm{c}$ are small, and for some $\gb,\gg\geq 0$ satisfying $\gb+\gg=1$ the norms
  \eq{
  \norm{(-A)^{-\gb} b}^2 = \sum_{k=1}^\infty k^{2\gb} \abs{\iprod{b}{e_k}}^2
  \qquad \mbox{and} \qquad
  \norm{(-A^\ast)^{-\gg} c}^2 = \sum_{k=1}^\infty k^{2\gg} \abs{\iprod{c}{e_k}}^2
  }
  are finite and small.
\end{example}

If~$X$ and~$Y$ are Banach spaces and~$A:X\rightarrow Y$ is a linear operator, we denote by~$\Dom(A)$, $\ran(A)$, and $\ker(A)$ the domain, the range, and the kernel of~$A$, respectively. 
The space of bounded linear operators from~$X$ to~$Y$ is denoted by~$\Lin(X,Y)$. 
If \mbox{$A:\Dom(A)\subset X\rightarrow X$,} then~$\gs(A)$, $\gs_p(A)$, $\gs_c(A)$ and~$\rho(A)$ denote the spectrum, the point spectrum, the continuous spectrum and the \mbox{resolvent} set of~$A$, respectively. For~$\gl\in\rho(A)$ the resolvent operator is given by \mbox{$R(\gl,A)=(\gl -A)^{-1}$}. 
The inner product on a Hilbert space is denoted by $\iprod{\cdot}{\cdot}$.

For a function $f:\R \rightarrow \R$ and for $\ga\geq 0$ we use the notation 
\eq{
f(\gw)=\Omi \left( \abs{\gw}^\ga \right)
}
if there exist constants $M>0$ and $\gw_0\geq 0$ such that
$\abs{f(\gw)}\leq M \abs{\gw}^\ga$
for all $\gw\in \R$ with $\abs{\gw}\geq \gw_0$.

\section{Main Results}
\label{sec:mainresults}

In this section we present our main results. 
It is well-known that if the semigroup generated by $A$ is strongly stable, then $A$ may have no eigenvalues on the imaginary axis, and therefore operators $A-i\gw$ are injective for all $\gw\in\R$.
Moreover, since $X$ is a Hilbert space, the Mean Ergodic Theorem~\cite{arendtbatty} applied to operators $A-i\gw$ shows that
\eq{
X = \ker(A-i\gw) \oplus \overline{\ran(A-i\gw)} = \overline{\ran(A-i\gw)}.
}
Therefore, the part of the spectrum of $A$ that is on the imaginary axis belongs to the continuous spectrum.

In the following we formulate our assumptions on the unperturbed operator $A$ as well as on the components $B$ and $C$ of the perturbing operator. The main assumption is that the intersection $\gs(A)\cap i\R =\set{i\gw_k}_{k\in\Igw}$ is finite, and the norm of the resolvent operator is polynomially bounded near the points $i\gw_k$.

\begin{assumption} 
  \label{ass:Astandass} 
  Let $X$ be a Hilbert space. Assume that the operators $A: \Dom(A)\subset X\rightarrow X$, $B\in \Lin(\Y,X)$, and $C \in \Lin(X,\Y)$ satisfy the following for some $\ga\geq 1$, $\gb,\gg\geq 0$, and $M_A>0$.
  \begin{itemize}
    \item[\textup{1.}]
      The operator $A$ generates a strongly stable semigroup, and  $\gs(A)\cap i\R = \set{i\gw_k}_{k\in \Igw}$ for a finite set $\Igw$ of indices and $d_A=\min_{k\neq l}\abs{\gw_k-\gw_l}>0$.
      Moreover, there exists $0<\eps_A\leq \max \set{1,d_A/3}$ such that
      \eqn{
        \label{eq:Aresolventgrowthorder}
	\sup_{0<\abs{\gw-\gw_k}\leq \eps_A} \abs{\gw-\gw_k}^\ga \norm{R(i\gw,A)}\leq M_A,
      }
      for all $k\in\Igw$
      and $\norm{R(i\gw,A)}\leq M_A$ whenever $\abs{\gw-\gw_k}>\eps_A$ for all $k\in \Igw$.
    \item[\textup{2.}] We have $\ran(B)\subset \ran( (i\gw_k-A)^\gb)$ and $\ran(C^\ast)\subset \ran( (-i\gw_k-A^\ast)^\gg)$ for every $k\in\Igw$.
  \end{itemize}
\end{assumption}

The Riesz Representation Theorem implies that there exists $\set{b_j}_{j=1}^p\subset X$ and $\set{c_j}_{j=1}^p\subset X$ such that
\eq{
BC = \sum_{j=1}^p \iprod{\cdot}{c_j}b_j.
}
The second part of Assumption~\ref{ass:Astandass} is therefore equivalent to requiring $\set{b_j}_{j=1}^p\subset \ran( (i\gw_k-A)^\gb)$ and $\set{c_j}_{j=1}^p\subset \ran( (i\gw_k-A)^\gg)$ for every $k\in\Igw$.
This immediately implies that $(i\gw_k-A)^{-\gb} B$ and $(-i\gw_k-A^\ast)^{-\gg} C^\ast$ are bounded operators.

 Our first main result concerns the preservation of strong stability.

\begin{theorem}
  \label{thm:stabpert}
  Let Assumption~\textup{\ref{ass:Astandass}} be satisfied with $\ga = \gb+\gg$. There exists $\gd>0$ such that if $\norm{B}<\gd$, $\norm{C}<\gd$ and
  \eq{
    \norm{(i\gw_k-A)^{-\gb} B}<\gd, \qquad  \norm{(-i\gw_k-A^\ast)^{-\gg}C^\ast}<\gd
  }
  for all $k\in \Igw$, then
  the semigroup generated by $A+BC$ is strongly stable. 

  In particular, the spectrum of $A+BC$ satisfies $\gs(A+BC)\cap i\R = \gs_c(A+BC)\cap i\R \subset \set{i\gw_k}_{k\in\Igw}$, and for all $k\in\Igw$
  \eq{
  \sup_{0<\abs{\gw-\gw_k}\leq\eps_A}\abs{\gw-\gw_k}^\ga\norm{R(i\gw,A+BC)} <\infty.
  }
\end{theorem}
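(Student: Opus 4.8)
The plan is to establish the spectral statement first (the ``in particular'' clause) and then read off strong stability from it via the Arendt--Batty--Lyubich--V\~u theorem. Fix $k\in\Igw$. Since $A-i\gw_k$ generates the bounded semigroup $e^{-i\gw_k t}T(t)$, the operator $i\gw_k-A$ is a non-negative (injective, densely invertible) operator whose fractional powers are well defined, and the second part of Assumption~\ref{ass:Astandass} lets me factor $B=(i\gw_k-A)^\gb\tilde B$ and $C=\tilde C\,(i\gw_k-A)^\gg$, where $\tilde B=(i\gw_k-A)^{-\gb}B$ and $\tilde C^\ast=(-i\gw_k-A^\ast)^{-\gg}C^\ast$ are bounded with norm $<\gd$. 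All the estimates below are produced near each $i\gw_k$ by this factorization and, away from the points $i\gw_k$, directly from the uniform bound $\norm{R(i\gw,A)}\le M_A$.

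The computational heart of the argument is the estimate
\[
\Norm{(i\gw_k-A)^\gg R(i\gw,A)(i\gw_k-A)^\gb}\le M,\qquad 0<\abs{\gw-\gw_k}\le\eps_A,
\]
together with its one-sided versions $\norm{(i\gw_k-A)^\gg R(i\gw,A)}\le M\abs{\gw-\gw_k}^{-\gb}$ and $\norm{R(i\gw,A)(i\gw_k-A)^\gb}\le M\abs{\gw-\gw_k}^{-\gg}$. Because $\gb+\gg=\ga$ while $\norm{R(i\gw,A)}\le M_A\abs{\gw-\gw_k}^{-\ga}$, these assert that the fractional powers exactly absorb the resolvent blow-up. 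I would prove them from the Balakrishnan representation of the fractional powers of $i\gw_k-A$ and the moment (interpolation) inequality, balancing the $\abs{\gw-\gw_k}$ factors against the $\ga$ powers in the resolvent bound. This is the step I expect to be the main obstacle: the fractional powers do not commute with $R(i\gw,A)$, and the interpolation must be controlled uniformly all the way down to the singular point $i\gw_k$.

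Granting this estimate, the remainder of the spectral statement is Sherman--Morrison--Woodbury bookkeeping. For $\gw\neq\gw_k$ we have $i\gw\in\rho(A)$, and $i\gw\in\rho(A+BC)$ if and only if the $p\times p$ matrix $I-CR(i\gw,A)B$ is invertible. Writing $CR(i\gw,A)B=\tilde C\,(i\gw_k-A)^\gg R(i\gw,A)(i\gw_k-A)^\gb\,\tilde B$ near $i\gw_k$ and estimating $CR(i\gw,A)B$ directly where $\norm{R(i\gw,A)}\le M_A$, the estimate and the bounds $\norm{\tilde B},\norm{\tilde C},\norm{B},\norm{C}<\gd$ give $\norm{CR(i\gw,A)B}<1$ for every $\gw\neq\gw_k$ once $\gd$ is small. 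Hence $i\R\setminus\set{i\gw_k}_{k\in\Igw}\subset\rho(A+BC)$, which proves $\gs(A+BC)\cap i\R\subset\set{i\gw_k}_{k\in\Igw}$. Substituting the resolvent identity $R(i\gw,A+BC)=R(i\gw,A)+R(i\gw,A)B\,[I-CR(i\gw,A)B]^{-1}\,CR(i\gw,A)$ and bounding the correction term by the two one-sided estimates (orders $\abs{\gw-\gw_k}^{-\gg}$ and $\abs{\gw-\gw_k}^{-\gb}$, the middle factor being uniformly bounded) yields a product of order $\abs{\gw-\gw_k}^{-\ga}$, which gives the claimed polynomial bound on $R(i\gw,A+BC)$.

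Finally I would verify the hypotheses of the Arendt--Batty--Lyubich--V\~u theorem for $A+BC$. Countability of $\gs(A+BC)\cap i\R$ is immediate from the finite inclusion above. To exclude point spectrum at $i\gw_k$, suppose $(A+BC)x=i\gw_k x$; the factorization and injectivity of $i\gw_k-A$ reduce this to the finite-dimensional equation $\eta=\tilde C\,(i\gw_k-A)^{\ga-1}\tilde B\,\eta$ for $\eta=Cx$, which for small $\gd$ forces $\eta=0$ and hence $x=0$, so $i\gw_k\notin\gs_p(A+BC)$. Applying the same argument to $(A+BC)^\ast=A^\ast+C^\ast B^\ast$, whose data satisfy the mirror image of Assumption~\ref{ass:Astandass}, gives $\gs_p((A+BC)^\ast)\cap i\R=\emptyset$; combined with the Mean Ergodic Theorem on the Hilbert space $X$, this also identifies any surviving boundary spectrum as continuous. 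The remaining hypothesis, boundedness of the semigroup generated by $A+BC$, I would obtain from the resolvent estimates via the Hilbert-space (Gomilko--Shi--Feng) resolvent characterization of bounded semigroups, controlling the $L^2$-norms of $R(\cdot,A+BC)x$ on vertical lines through the resolvent identity and the finite-rank structure. The theorem then yields strong stability. I expect the eigenvalue exclusion when $\ga>1$ and the boundedness to be the two most delicate points here, since in both the unbounded factor $(i\gw_k-A)^{\ga-1}$ is no longer tamed by an intervening resolvent.
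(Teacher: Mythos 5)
Your overall architecture (Sherman--Morrison--Woodbury, exclusion of point spectrum at the points $i\gw_k$, Gomilko's resolvent characterization of uniform boundedness, then Arendt--Batty--Lyubich--V\~{u}) matches the paper's. The gap is in the step you yourself call the computational heart: the claimed uniform bound $\norm{(i\gw_k-A)^\gg R(i\gw,A)(i\gw_k-A)^\gb}\le M$ for $0<\abs{\gw-\gw_k}\le\eps_A$ is false whenever $\ga=\gb+\gg>1$ and $A$ is unbounded. The fractional powers \emph{do} commute with the resolvent (they are all functions of the sectorial operator $i\gw_k-A$), so that operator equals $(i\gw_k-A)^{\ga}R(i\gw,A)$, and the resolvent absorbs only one power: $(i\gw_k-A)R(i\gw,A)=I-(i\gw-i\gw_k)R(i\gw,A)$ is bounded, but $(i\gw_k-A)^{\ga}R(i\gw,A)=(i\gw_k-A)^{\ga-1}\bigl(I-(i\gw-i\gw_k)R(i\gw,A)\bigr)$ contains the unbounded summand $(i\gw_k-A)^{\ga-1}$. (Test with a normal multiplication operator: $\sup_{\mu\in\gs(A)}\abs{\mu-i\gw_k}^\ga/\abs{i\gw-\mu}\gtrsim\abs{\mu-i\gw_k}^{\ga-1}\to\infty$.) The hypothesis $\ran(B)\subset\ran((i\gw_k-A)^\gb)$ constrains $B$ only with respect to the \emph{negative} powers, i.e.\ near the singular point, and gives no control of $(i\gw_k-A)^{\gg-1}B$; so $CR(i\gw,A)B$ cannot be bounded by sandwiching a norm-bounded middle operator. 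The same defect invalidates the one-sided versions when $\gb>1$ or $\gg>1$, and your eigenvector reduction to $\eta=\tilde C(i\gw_k-A)^{\ga-1}\tilde B\eta$, since $(i\gw_k-A)^{\ga-1}\tilde B=(i\gw_k-A)^{\gg-1}B$ need not even be defined. Note that Section~\ref{sec:multex} of the paper uses the bound $\norm{(-A)^2R(\gl,A)}\le M_1$ only because there $A$ is bounded, and states explicitly that this shortcut is not available in the general proof.

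What does work, and what the paper's Lemmas~\ref{lem:ARboundzero} and~\ref{lem:RBCRest} implement, is to peel off the integer parts of $\gb$ and $\gg$ one unit at a time via $(i\gw_k-A)R(\gl,A)=-(\gl-i\gw_k)R(\gl,A)+I$, each step trading one power of $(i\gw_k-A)$ for a factor $\abs{\gl-i\gw_k}$ in front of the resolvent plus a remainder term involving the intermediate graph norms $\norm{(i\gw_k-A)^{-l}B}$ and $\norm{(-i\gw_k-A^\ast)^{-l}C^\ast}$, which the hypotheses control through the moment inequality (Corollary~\ref{cor:normrels}); only the residual fractional power $\tilde{\ga}=\ga-\floor{\ga}<1$ is estimated in operator norm, via $\abs{\gl-i\gw_k}^{\floor{\ga}}\norm{(i\gw_k-A)^{\tilde{\ga}}R(\gl,A)}\le M_1$ (Lemma~\ref{lem:ARfracbnd}), which is the correct bounded substitute for your estimate. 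The remainder terms are precisely what must then be fed into the H\"older/vertical-line integrals for Gomilko's criterion. For the eigenvalue exclusion the paper instead splits $1=\gb_1+\gg_1$ with $\gb_1\le\gb$, $\gg_1\le\gg$ and factors $i\gw_k-A-BC$ through $I-(i\gw_k-A)^{-\gb_1}BC_{\gg_1}$, avoiding any positive unbounded power. For $\ga=1$ your estimates are essentially correct and your argument would go through; for $\ga>1$ the proposal cannot be repaired without replacing its central estimate by this bookkeeping.
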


The proof of Theorem~\ref{thm:stabpert} is divided into two parts. In Section~\ref{sec:specpert} we study the change of the spectrum of $A$. In Section~\ref{sec:stabpert} we complete the proof of Theorem~\ref{thm:stabpert} by showing that the uniform boundedness of $T(t)$ is preserved under the perturbations.

We remark that the polynomial growth condition for the resolvent was assumed to be satisfied for $\ga\geq 1$. The following lemma shows that this assumption does not result in any loss of generality.

\begin{lemma}
  If $\gs(A)\cap i\R\neq \varnothing$, then $\ga\geq 1$ in the condition~\textup{\eqref{eq:Aresolventgrowthorder}}.
\end{lemma}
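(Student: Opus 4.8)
The plan is to combine the standard lower bound on the resolvent norm in terms of the distance to the spectrum with the assumed polynomial upper bound~\eqref{eq:Aresolventgrowthorder}, and to derive a contradiction in the case $\ga<1$.

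First I would recall the elementary inequality
\[
  \norm{R(\gl,A)} \geq \frac{1}{\dist(\gl,\gs(A))}, \qquad \gl\in\rho(A),
\]
which holds for any closed operator: if $\abs{\mu-\gl}<1/\norm{R(\gl,A)}$, then the Neumann series for $R(\mu,A)$ converges and hence $\mu\in\rho(A)$; consequently every spectral point lies at distance at least $1/\norm{R(\gl,A)}$ from $\gl$.

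Next, since $\gs(A)\cap i\R\neq\varnothing$, pick any $k\in\Igw$, so that $i\gw_k\in\gs(A)$. Because $i\gw_k$ is a spectral point we have $\dist(i\gw,\gs(A))\leq\abs{i\gw-i\gw_k}=\abs{\gw-\gw_k}$ for every $\gw\in\R$, and therefore
\[
  \norm{R(i\gw,A)} \geq \frac{1}{\abs{\gw-\gw_k}}, \qquad 0<\abs{\gw-\gw_k}\leq\eps_A.
\]
Multiplying by $\abs{\gw-\gw_k}^\ga$ and invoking~\eqref{eq:Aresolventgrowthorder} yields $\abs{\gw-\gw_k}^{\ga-1}\leq M_A$ for all such $\gw$.

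Finally, I would let $\abs{\gw-\gw_k}\to 0^+$. If $\ga<1$, the exponent $\ga-1$ is negative, so $\abs{\gw-\gw_k}^{\ga-1}\to+\infty$, contradicting the uniform bound $M_A$. Hence $\ga\geq 1$. There is no genuine obstacle here: the whole argument rests on the lower resolvent estimate, which is completely standard, and the only point requiring care is the observation that the spectrum of $A$ is no farther from $i\gw$ than the fixed point $i\gw_k$.
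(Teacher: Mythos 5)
Your proof is correct and follows essentially the same route as the paper: combine the standard lower bound $\norm{R(i\gw,A)}\geq \dist(i\gw,\gs(A))^{-1}\geq \abs{\gw-\gw_k}^{-1}$ with the assumed upper bound~\eqref{eq:Aresolventgrowthorder} to get $\abs{\gw-\gw_k}^{\ga-1}\leq M_A$, which forces $\ga\geq 1$ as $\gw\to\gw_k$. The only difference is that you also supply the Neumann-series justification of the lower resolvent estimate, which the paper simply cites.
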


\begin{proof}
  Let $k\in \Igw$.
  Since $i\gw_k\in \gs(A)$, for $i\gw$ near $i\gw_k$ we have $\dist(i\gw,\gs(A))\leq \abs{\gw-\gw_k}$. Thus for all such $\gw$ satisfying $0<\abs{\gw-\gw_k}\leq \eps_A$ the standard estimate~\cite[Cor. IV.1.14]{engelnagel} $\norm{R(\gw,A)}\geq \dist(\gl,\gs(A))^{-1}$ implies 
  \eq{
  \frac{1}{\abs{\gw-\gw_k}}\leq \frac{1}{\dist(i\gw,\gs(A))} \leq \norm{R(i\gw,A)} \leq \frac{M_A}{\abs{\gw-\gw_k}^\ga},
  }
  which further implies $\abs{\gw-\gw_k}^{\ga-1} \leq M_A$. However, for small $\abs{\gw-\gw_k}$ this is only possible if $\ga\geq 1$.
\end{proof}

Our second main result concerns the preservation of polynomial stability of a semigroup.
The semigroup $T(t)$ generated by $A$ on the Hilbert space $X$ is called \keyterm{polynomially stable} if $T(t)$ is uniformly bounded, if $\gs(A)\cap i\R=\varnothing$, and if there exists $\ga>0$ and $M\geq 1$ such that 
\eq{
\norm{T(t)A\inv} \leq \frac{M}{t^{1/\ga}}, \qquad \forall t>0.
}
The following theorem gives conditions for the preservation of the polynomial stability under bounded finite rank perturbations. The theorem extends the results in~\cite{Pau12,Pau13a} by allowing the exponents $\gb\geq 0$ and $\gg\geq 0$ to be real numbers.

\begin{theorem}
  \label{thm:Apolpert}
  Assume $T(t)$ generated by $A$ is polynomially stable with exponent $\ga>0$, and $\gb,\gg\geq 0$ are such that $\gb+\gg\geq \ga$.
  There exists $\delta>0$ such that if
  $B\in \Lin(\C^p,X)$ and $C\in \Lin(X,\C^p)$ satisfy
  \eqn{
  \label{eq:BCpolstabrancond}
  \ran(B)\subset \Dom( (-A)^\gb), \quad \mbox{and} \quad \ran(C^\ast)\subset \Dom( (-A^\ast)^\gg)
  }
  and
  $\norm{(-A)^\gb B}<\gd$ and $\norm{(-A^\ast)^{\gg} C^\ast}<\gd$, then the semigroup generated by $A+BC$ is polynomially stable with the same exponent $\ga$.
\end{theorem}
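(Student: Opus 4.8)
The plan is to follow the same architecture as the perturbation arguments in \cite{Pau12,Pau13a}, isolating the one place where the integrality (or $\geq\ga$) restriction on the exponents was actually used, and replacing it by an interpolation estimate valid for all real $\gb,\gg\geq 0$. Since $BC$ is a bounded finite rank operator, $A+BC$ automatically generates a \Csg, so the task reduces to verifying the three hypotheses of the resolvent characterization of polynomial stability (Borichev--Tomilov): uniform boundedness of the perturbed semigroup, $\gs(A+BC)\cap i\R=\varnothing$, and $\norm{R(i\gw,A+BC)}=\Omi(\abs{\gw}^\ga)$. Because $\gs(A)\cap i\R=\varnothing$, for every $\gw\in\R$ the Sherman--Morrison--Woodbury identity $R(i\gw,A+BC)=R(i\gw,A)+R(i\gw,A)B\,[I-CR(i\gw,A)B]^{-1}CR(i\gw,A)$ holds provided the $p\times p$ matrix $I-CR(i\gw,A)B$ is invertible, so the entire spectral and resolvent analysis on $i\R$ is controlled by this finite dimensional object.

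The single new ingredient I would prove is the smoothing estimate $\norm{(-A)^{-\theta}R(i\gw,A)}=\Omi(\abs{\gw}^{\ga-\theta})$, uniformly in $\gw\in\R$, for every real $\theta\in[0,\lceil\ga\rceil]$. First I would factor the perturbation using Assumption~\ref{ass:Astandass}: writing $B_\gb=(-A)^\gb B$ and $C_\gg=(-A^\ast)^\gg C^\ast$ (both bounded, with norm $<\gd$) gives $B=(-A)^{-\gb}B_\gb$ and $C=C_\gg^\ast(-A)^{-\gg}$, hence $CR(i\gw,A)B=C_\gg^\ast(-A)^{-(\gb+\gg)}R(i\gw,A)B_\gb$, where $(-A)^{-\gb}$, $(-A)^{-\gg}$ and $R(i\gw,A)$ all commute. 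For integer $\theta=m$ the estimate follows by iterating the resolvent identity: from $A R(i\gw,A)=i\gw R(i\gw,A)-I$ one gets $(-A)^{-1}R(i\gw,A)=-\tfrac{1}{i\gw}\bigl(R(i\gw,A)+(-A)^{-1}\bigr)$, so each factor of $(-A)^{-1}$ lowers the resolvent growth exponent by one, and induction yields $\norm{(-A)^{-m}R(i\gw,A)}\leq C\abs{\gw}^{\ga-m}$ for $\abs{\gw}\geq1$ and $m\leq\ga+1$. The non-integer case is then obtained by the moment inequality for the injective nonnegative operator $-A$ (which is sectorial, since $0\in\rho(A)$): with $u=R(i\gw,A)x$ and an integer $m\geq\ga$, $\norm{(-A)^{-\theta}u}\leq C\norm{u}^{1-\theta/m}\norm{(-A)^{-m}u}^{\theta/m}$, and inserting $\norm{u}\leq M\abs{\gw}^\ga\norm{x}$ together with the integer bound gives exactly the exponent $\ga-\theta$ after the arithmetic cancels. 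I expect this interpolation step, carried out for \emph{real} $\theta$ purely through the moment inequality (so that no bounded imaginary powers / BIP assumption is needed), to be the crux that removes the integrality restriction of the earlier results.

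With this estimate the remaining spectral steps are routine. Since $(-A)^{-(\gb+\gg)}=(-A)^{-(\gb+\gg-\ga)}(-A)^{-\ga}$ and $\gb+\gg\geq\ga$, the key bound gives $\sup_{\gw\in\R}\norm{(-A)^{-(\gb+\gg)}R(i\gw,A)}\leq M'<\infty$ (the small-$\gw$ range being handled by continuity and $i\R\subset\rho(A)$), whence $\norm{CR(i\gw,A)B}\leq M'\norm{C_\gg}\,\norm{B_\gb}\leq M'\gd^2$. Choosing $\gd$ so that $M'\gd^2<1$ forces $I-CR(i\gw,A)B$ to be invertible for all $\gw\in\R$, i.e. $\gs(A+BC)\cap i\R=\varnothing$, with the inverse uniformly bounded. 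Feeding the intermediate estimates $\norm{R(i\gw,A)B}\leq C\abs{\gw}^{\ga-\gb}\norm{B_\gb}$ and $\norm{CR(i\gw,A)}\leq C\abs{\gw}^{\ga-\gg}\norm{C_\gg}$ into the resolvent formula yields a second term of order $\abs{\gw}^{2\ga-\gb-\gg}\leq\abs{\gw}^\ga$, so $\norm{R(i\gw,A+BC)}=\Omi(\abs{\gw}^\ga)$.

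What remains, and what I would flag as the genuinely delicate non-spectral point, is the uniform boundedness of the semigroup generated by $A+BC$; resolvent bounds alone do not deliver it. Here I would reproduce the time-domain argument of \cite{Pau12,Pau13a}: the variation-of-parameters formula $T_{BC}(t)x=T(t)x+\int_0^t T(t-s)BC\,T_{BC}(s)x\,ds$ combined with the fact that $\ran(B)\subset\Dom((-A)^\gb)$ produces extra polynomial decay of $T(t)B$ (via $\norm{T(t)(-A)^{-\gb}}$), which for $\gd$ small enough closes a Gronwall/Neumann-series bound on $\norm{T_{BC}(t)}$. Once $T_{BC}(t)$ is bounded, $\gs(A+BC)\cap i\R=\varnothing$ and $\norm{R(i\gw,A+BC)}=\Omi(\abs{\gw}^\ga)$ together invoke the Borichev--Tomilov characterization to conclude that $A+BC$ generates a polynomially stable semigroup with the same exponent $\ga$. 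Thus the proof rests on two pillars — the real-exponent interpolation estimate (the new contribution) and the preservation of boundedness (inherited from the earlier framework) — with the interpolation step being the main obstacle to overcome.
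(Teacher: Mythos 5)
Your spectral analysis and the growth bound $\norm{R(i\gw,A+BC)}=\Omi(\abs{\gw}^\ga)$ are sound, and the interpolation estimate $\norm{(-A)^{-\theta}R(i\gw,A)}=\Omi(\abs{\gw}^{\ga-\theta})$ you build is essentially the same moment-inequality mechanism the paper relies on (the paper simply imports the invertibility of $I-CR(\gl,A)B$ on $\overline{\C^+}$ from \cite[Cor. 7]{Pau13a} and the bound $\sup_{\gl\in\overline{\C^+}}\norm{R(\gl,A)(-A)^{-\ga}}<\infty$ from \cite{BorTom10}). The genuine gap is in the step you yourself flag as delicate: the uniform boundedness of the perturbed semigroup. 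The time-domain variation-of-parameters/Gronwall argument you propose does not close for the exponent range the theorem covers. From $\ran(B)\subset\Dom((-A)^\gb)$ one only gets $\norm{T(t)B}\leq\norm{T(t)(-A)^{-\gb}}\,\norm{(-A)^\gb B}\lesssim (1+t)^{-\gb/\ga}$, which is integrable on $(0,\infty)$ only when $\gb>\ga$. In the typical admissible case $\gb=\gg=\ga/2$ the kernel $\norm{T(t-s)B}$ is not integrable, a plain Gronwall estimate returns exponential growth, and there is no evident way to feed the $\gg$-smoothing of $C^\ast$ into the convolution, since it acts on the unknown semigroup $T_{BC}(s)$. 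Combining the two one-sided smoothings is precisely the difficulty behind the restriction you are trying to remove, and your sketch does not address it. (Note also that \cite{Pau12,Pau13a} are not time-domain arguments, so there is no existing Gronwall argument to "reproduce".)

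The paper resolves this in the frequency domain: uniform boundedness on a Hilbert space is characterized by the square-function condition of Theorem~\ref{thm:unifbddconds}, so after Sherman--Morrison--Woodbury the whole problem reduces to showing $\sup_{\xi>0}\xi\int_{\R}\norm{R_\gl B}^2\norm{CR_\gl}^2\,d\eta<\infty$ with $R_\gl=R(\xi+i\eta,A)$. This is proved by choosing $0<\gb_1\leq\gb$ and $0<\gg_1\leq\gg$ with $\gb_1+\gg_1=\ga$, using the moment inequality together with $\norm{R(\gl,A)(-A)^{-\ga}}\leq M_R$ to obtain
\[
\norm{R_\gl B}\,\norm{CR_\gl}\leq \tilde{M}\,\norm{R_\gl (-A)^{\gb_1}B}^{1-\gb_1/\ga}\,\norm{R_\gl^\ast (-A^\ast)^{\gg_1}C^\ast}^{1-\gg_1/\ga},
\]
and then applying H\"older's inequality with the conjugate exponents $q=1/(1-\gb_1/\ga)$ and $r=1/(1-\gg_1/\ga)$ to split the integral into two square functions of finite-rank operators, each finite by Lemma~\ref{lem:BCfinrankint}. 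That H\"older step --- distributing the two fractional smoothings inside the $L^2$ integral --- is the actual crux of removing the integrality restriction; your proposal identifies the right interpolation tool but deploys it only on pointwise resolvent bounds, leaving the square-function estimate, and hence the uniform boundedness, unproven.
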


\section{Perturbation of the Spectrum}
\label{sec:specpert}

In this section we show that under the conditions of Theorem~\ref{thm:stabpert} the spectrum of the perturbed operator satisfies $\gs(A+BC)\subset \overline{\C^+}$ and $\gs(A+BC)\cap i\R\subset \set{i\gw_k}_{k\in\Igw}$ are not eigenvalues of $A+BC$.  
On its own, this result is valid 
under weaker assumptions than those in Theorem~\ref{thm:stabpert}.
In particular, we can assume $\gs(A)\cap i\R = \set{i\gw_k}_{k\in\Igw}$ for a countable set $\Igw$ of indices
if the points $\gw_k$ have a finite gap $d_A=\inf_{k\neq l}\abs{\gw_k-\gw_l}>0$. 
Moreover, the perturbation $BC$ does not need to be of finite rank. Instead, for some Banach space $Y$ we can consider $B\in \Lin(Y,X)$ and $C\in \Lin(X,Y)$ such that $(i\gw_k-A)^{-\gb}B$ and $(-i\gw_k-A^\ast)^{-\gg}C^\ast$ are bounded operators for all $k\in\Igw$.

\begin{theorem}
  \label{thm:specpert}
  Let Assumption~\textup{\ref{ass:Astandass}} be satisfied with $\ga=\gb+\gg$. 
  There exists $\gd>0$ such that if $\norm{B}<\gd$, $\norm{C}<\gd$, and
  \eq{
    \norm{(i\gw_k-A)^{-\gb} B}<\gd, \qquad  \norm{(-i\gw_k-A^\ast)^{-\gg}C^\ast}<\gd,
  }
  for all $k\in\Igw$, then
  $\overline{\C^+}\setminus \set{i\gw_k}_{k\in\Igw}\subset \rho(A+BC)$ and
  $i\gw_k\notin \gs_p(A+BC)$ for all $k\in\Igw$.
  In particular, under the above conditions we have
  \eq{
\sup_{\gl\in \overline{\C^+}\setminus \set{i\gw_k}_k} \; \norm{(I-CR(\gl,A)B)\inv}<\infty.
  }
\end{theorem}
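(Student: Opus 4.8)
The plan is to reduce the invertibility of $\gl-(A+BC)$ to a finite-dimensional condition on $\Y=\C^p$ and then to control the resulting $p\times p$ operator family uniformly over $\closure{\Cp}\setminus\set{i\gw_k}_k$. First, since $T(t)$ is strongly stable it is uniformly bounded (apply the uniform boundedness principle to $\set{T(t)x}_{t}$ for each $x$), so $\Cp\subset\rho(A)$ with $\norm{R(\gl,A)}\leq M/\re\gl$; together with Assumption~\textup{\ref{ass:Astandass}} this gives $\closure{\Cp}\setminus\set{i\gw_k}_k\subset\rho(A)$. For such $\gl$ I would write
\eq{
\gl-(A+BC)=(\gl-A)\bigl(I-R(\gl,A)BC\bigr),
}
and, since $R(\gl,A)B\in\Lin(\Y,X)$ and $C\in\Lin(X,\Y)$, use the identity that $I_X-\bigl(R(\gl,A)B\bigr)C$ is invertible if and only if $I_{\Y}-C\,R(\gl,A)B$ is invertible, with $(I_X-R(\gl,A)BC)\inv=I_X+R(\gl,A)B\,(I_{\Y}-CR(\gl,A)B)\inv C$. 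Thus it suffices to establish $\norm{CR(\gl,A)B}\leq\tfrac12$ for every $\gl\in\closure{\Cp}\setminus\set{i\gw_k}_k$: this yields $\gl\in\rho(A+BC)$ together with the Neumann bound $\norm{(I-CR(\gl,A)B)\inv}\leq 2$, which is exactly the claimed uniform estimate.

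The hypotheses supply bounded operators $B_k=(i\gw_k-A)^{-\gb}B$ and $C_k=\bigl((-i\gw_k-A^\ast)^{-\gg}C^\ast\bigr)^\ast$ with $\norm{B_k},\norm{C_k}<\gd$ and factorizations $B=(i\gw_k-A)^\gb B_k$, $C=C_k(i\gw_k-A)^\gg$ on the appropriate domains. Before estimating, I would extend the resolvent bound from $i\R$ into $\closure{\Cp}$: from $R(i\gw+s,A)=R(i\gw,A)\bigl(I+sR(i\gw,A)\bigr)\inv$ for small $s>0$ (Neumann series), combined with $\norm{R(\gl,A)}\leq M/\re\gl$, one obtains constants $M',\tilde M>0$ such that $\norm{R(\gl,A)}\leq M'$ whenever $\dist(\gl,\set{i\gw_k}_k)>\eps_A$, and $\norm{R(\gl,A)}\leq\tilde M/\abs{\gl-i\gw_k}^\ga$ whenever $0<\abs{\gl-i\gw_k}\leq\eps_A$. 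Away from the $i\gw_k$ this gives immediately $\norm{CR(\gl,A)B}\leq\norm{C}\,M'\,\norm{B}\leq M'\gd^2$, which is small.

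The crux is the regime $0<\abs{\gl-i\gw_k}\leq\eps_A$, where I would factor
\eq{
CR(\gl,A)B=C_k\,(i\gw_k-A)^\gg R(\gl,A)(i\gw_k-A)^\gb\,B_k,
}
reducing everything to a uniform bound on the middle family. Since the fractional powers commute with $R(\gl,A)$ and $\gb+\gg=\ga$, this middle operator agrees with $(i\gw_k-A)^\ga R(\gl,A)$ on $\Dom((i\gw_k-A)^\ga)$, and its operator norm is governed by $\sup_{z\in\gs(A)}\abs{(i\gw_k-z)^\ga/(\gl-z)}$. The scalar inequality
\eq{
\abs{i\gw_k-z}^\ga\leq 2^{\ga-1}\bigl(\abs{\gl-i\gw_k}^\ga+\abs{\gl-z}^\ga\bigr),
}
together with $\abs{\gl-i\gw_k}^\ga\norm{R(\gl,A)}\leq\tilde M$ and the fact that $\abs{\gl-z}^{\ga-1}$ is bounded on the (bounded) region since $\ga\geq1$, controls this supremum by a constant $K$. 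The \textbf{main obstacle} is precisely the passage from this \emph{pointwise} spectral bound to a genuine \emph{operator-norm} bound on $(i\gw_k-A)^\ga R(\gl,A)$: I would carry this out through the sectorial functional calculus of $i\gw_k-A$ and the integral representation of its fractional powers, realizing the inequality above at the operator level after the bounded factors $B_k,C_k$ have been moved outside — a naive triangle-inequality split through $(i\gw_k-A)^\ga=\bigl((\gl-A)-(\gl-i\gw_k)\bigr)^\ga$ fails here because it produces unbounded pieces. Granting the bound $K$, choosing $\gd<\min\set{1,(2K)^{-1/2},(2M')^{-1/2}}$ forces $\norm{CR(\gl,A)B}\leq\tfrac12$ throughout $\closure{\Cp}\setminus\set{i\gw_k}_k$.

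Finally, the points $i\gw_k$ lie in $\gs(A)$, so the factorization in the first paragraph does not apply there and the exclusion $i\gw_k\notin\gs_p(A+BC)$ must be argued directly. If $(A+BC)x=i\gw_k x$ with $x\neq0$, then $(i\gw_k-A)x=Bv$ with $v=Cx$; since $i\gw_k\notin\gs_p(A)$ (the Mean Ergodic Theorem argument of Section~\ref{sec:mainresults}) one gets $v\neq0$, and inserting $Bv=(i\gw_k-A)^\gb B_k v$ and tracking the fractional-power domains — in the same spirit as the main estimate — exhibits $v$ as a nonzero fixed vector of a bounded operator whose norm the smallness of $\gd$ forces below $1$, a contradiction. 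Assembling the three regimes then yields $\closure{\Cp}\setminus\set{i\gw_k}_k\subset\rho(A+BC)$, $i\gw_k\notin\gs_p(A+BC)$, and $\sup_{\gl}\norm{(I-CR(\gl,A)B)\inv}\leq2$, completing the proof.
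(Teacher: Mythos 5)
There is a genuine gap at the heart of your argument, namely in the regime $0<\abs{\gl-i\gw_k}\leq\eps_A$. You reduce everything to an operator-norm bound on the ``middle family'' $(i\gw_k-A)^{\ga}R(\gl,A)$, but for $\ga>1$ and unbounded $A$ this operator is genuinely unbounded: $R(\gl,A)$ regularizes by exactly one power of $A$, so $(i\gw_k-A)R(\gl,A)=-(\gl-i\gw_k)R(\gl,A)+I$ is bounded, while applying $(i\gw_k-A)$ a second (or fractional further) time produces an unbounded piece. No functional calculus can produce the constant $K$ you need, and the symptom already shows in your scalar estimate: after dividing $\abs{i\gw_k-z}^{\ga}\leq 2^{\ga-1}(\abs{\gl-i\gw_k}^{\ga}+\abs{\gl-z}^{\ga})$ by $\abs{\gl-z}$ you are left with $\abs{\gl-z}^{\ga-1}$, and $z$ ranges over $\gs(A)$, which is an \emph{unbounded} subset of $\closure{\C^-}$ (it is $\gl$ that lives in a bounded region, not $z$), so this supremum is infinite for $\ga>1$. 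Even for normal $A$ the bound fails; for general sectorial $A$ one additionally cannot control $\norm{f(A)}$ by $\sup_{\gs(A)}\abs{f}$. A related domain problem occurs in your factorization $CR(\gl,A)B=C_k(i\gw_k-A)^{\gg}R(\gl,A)(i\gw_k-A)^{\gb}B_k$: for $\gg>1$ one needs $R(\gl,A)B$ to map into $\Dom((i\gw_k-A)^{\gg})$, i.e.\ $\ran(B)\subset\Dom((i\gw_k-A)^{\gg-1})$, which is smoothness in the \emph{opposite} direction from the hypothesis $\ran(B)\subset\ran((i\gw_k-A)^{\gb})$ and is not assumed.

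The paper avoids forming $(i\gw_k-A)^{\ga}R(\gl,A)$ altogether. In Lemma~\ref{lem:ARboundzero} it estimates $\abs{\iprod{CR(\gl,A)Bx}{y}}$ by peeling off one integer power at a time via $(i\gw_k-A)R_\gl=-(\gl-i\gw_k)R_\gl+I$, first on the $B$ side (producing $B_1,\dots,B_m$) and then, through the inner product, on the $C^\ast$ side (producing $\tilde C_1,\dots,\tilde C_n$); every intermediate object is bounded because the partially inverted factors $B_l,\tilde C_l$ stay attached, and the leftover fractional exponent $\tilde\ga<1$ is handled by the Moment Inequality together with the bound $\sup_{\gl\in\Omega_k}\abs{\gl-i\gw_k}^{\floor{\ga}}\norm{(i\gw_k-A)^{\tilde\ga}R(\gl,A)}<\infty$ of Lemma~\ref{lem:ARfracbnd} --- note the exponent there is strictly less than $1$, which is exactly why that operator \emph{is} bounded. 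Two smaller points: your derivation of $\norm{R(\gl,A)}\leq\tilde M/\abs{\gl-i\gw_k}^{\ga}$ on $\Omega_k$ via a Neumann series fails where $s\norm{R(i\gw,A)}\geq 1$, i.e.\ precisely near $i\gw_k$ (the paper's Lemma~\ref{lem:Okbdd} uses the resolvent identity and the Hille--Yosida bound $\re\gl\,\norm{R(\gl,A)}\leq M$ instead); and your eigenvalue-exclusion sketch should, as in Lemma~\ref{lem:ABCinj}, use auxiliary exponents with $\gb_1+\gg_1=1$ (not $\ga$) so that only a single full power of $(i\gw_k-A)$ ever acts on the eigenvector $\phi\in\Dom(A)$. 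The overall architecture --- Sherman--Morrison--Woodbury reduction, smallness of $\norm{CR(\gl,A)B}$, Neumann series for the uniform bound --- matches the paper, but the key estimate near the imaginary spectral points is not established by your argument.
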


We prove the theorem in parts. For the study of the change of the spectrum of $A$ we use the Shermann--Morrison--Woodbury formula given in the following lemma.

\begin{lemma}
  \label{lem:ShermanMorrisonWoodbury}
  Let $\gl\in\rho(A)$, $B  \in \Lin(Y,X)$, $C\in \Lin(X,Y)$. If $1\in \rho(CR(\gl,A)B)$, then $\gl\in\rho(A+BC )$ and
  \eq{
  R(\gl,A+BC) = R(\gl,A)+ R(\gl,A) B(I- CR(\gl,A)B)\inv C R(\gl,A).
  }
\end{lemma}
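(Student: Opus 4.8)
The plan is to verify directly that the operator
\eq{
S = R(\gl,A)+ R(\gl,A) B(I- CR(\gl,A)B)\inv C R(\gl,A)
}
is a two-sided inverse of $\gl-(A+BC)$. Since $B$ and $C$ are bounded, the perturbation $BC\in\Lin(X)$ is bounded, so $\Dom(A+BC)=\Dom(A)$ and $\gl-(A+BC)=(\gl-A)-BC$. The hypothesis $1\in\rho(CR(\gl,A)B)$ guarantees that $W:=(I-CR(\gl,A)B)\inv\in\Lin(Y)$ exists, so $S$ is a well-defined bounded operator on $X$; note also that $S$ maps into $\ran(R(\gl,A))=\Dom(A)$, as it must.

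First I would check that $((\gl-A)-BC)S=I$ on all of $X$. Abbreviating $R=R(\gl,A)$, one expands the product and uses $(\gl-A)R=I$ to reduce everything to a bracketed expression of the form $B\bigl[(I-CRB)WCR-CR\bigr]$; the defining relation $(I-CRB)W=I$ then collapses this to $B[CR-CR]=0$, leaving $I$.

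Next I would check that $S((\gl-A)-BC)=I$ on $\Dom(A)$. For $x\in\Dom(A)$ one uses $R(\gl-A)x=x$ and collects the remaining terms into $RB\bigl[W(I-CRB)Cx-Cx\bigr]$; the other defining relation $W(I-CRB)=I$ makes the bracket vanish, again returning $x$. Together these two identities show $\gl\in\rho(A+BC)$ with $R(\gl,A+BC)=S$, which is exactly the claimed formula.

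There is no serious obstacle here: the statement is an algebraic resolvent identity, and the only points requiring genuine care are the bookkeeping in the two expansions and the observation that the two one-sided computations must be carried out on the correct sets—all of $X$ for the right inverse and $\Dom(A)$ for the left inverse—since $\Dom(A)$ is in general a proper subset of $X$. Once both one-sided relations are established on their respective sets, invertibility of $\gl-(A+BC)$ follows.
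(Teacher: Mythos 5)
Your proof is correct: both one-sided computations check out (the right inverse reduces to $B[(I-CRB)WCR-CR]=0$ on all of $X$, the left inverse to $RB[W(I-CRB)C-C]x=0$ on $\Dom(A)$), and you are right to flag the domain bookkeeping, since $S$ maps into $\Dom(A)=\Dom(A+BC)$ and the left-inverse identity only makes sense there. The paper itself states this lemma without proof, citing it as the standard Sherman--Morrison--Woodbury resolvent identity, so there is no argument in the text to compare against; your direct verification is exactly the expected one.
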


The Moment Inequality~\cite[Prop. 6.6.4]{haasefuncalc} is one of our most important tools
in dealing with non-integer exponents $\ga$, $\gb$, and $\gg$. The following lemma collects the most frequently used estimates of this type.

\begin{lemma}
  \label{lem:Momentineq}
  Assume $A$ generates a uniformly bounded semigroup and $\gs_p(A)\cap i\R=\varnothing $.
Let $0<\tilde{\ga}<\ga$, $0<\tilde{\gb}<\gb$, and $0<\tilde{\gg}<\gg$. Then there exists 
  $\Ma, \Mb, \Mc\geq 1$ 
  such that for all $\gw\in \R$ we have
  \eq{
  \norm{(i\gw-A)^{\tilde{\ga}}x} 
  &\leq \Ma \norm{x}^{1-\tilde{\ga}/\ga}\norm{(i\gw-A)^{\ga}x}^{\tilde{\ga}/\ga} \qquad \forall x\in \Dom( (i\gw-A)^\ga)\\
  \norm{(i\gw-A)^{-\tilde{\gb}}x}   &\leq 
 \Mb \norm{x}^{1-\tilde{\gb}/\gb}\norm{(i\gw-A)^{-\gb}x}^{\tilde{\gb}/\gb} \qquad \forall x\in \Dom( (i\gw-A)^{-\gb})\\
  \norm{(-i\gw-A^\ast)^{-\tilde{\gg}}x}  &\leq 
 \Mc \norm{x}^{1-\tilde{\gg}/\gg}\norm{(-i\gw-A^\ast)^{-\gg}x}^{\tilde{\gg}/\gg} \qquad \forall x\in \Dom( (-i\gw-A^\ast)^{-\gg}).
  }
  If $B$ and $C$ satisfy the conditions of Assumption~\textup{\ref{ass:Astandass}}, then 
  $(i\gw_k - A)^{-\tilde{\gb}}B$ and $(-i\gw_k-A^\ast)^{-\tilde{\gg}}C^\ast$ are bounded operators and their norms satisfy 
\eq{
\norm{(i\gw_k-A)^{-\tilde{\gb}}B}   &\leq 
\Mb\norm{B}^{1-\tilde{\gb}/\gb}\norm{(i\gw_k-A)^{-\gb}B}^{\tilde{\gb}/\gb} \\
\norm{(-i\gw_k-A^\ast)^{-\tilde{\gg}}C^\ast}  &\leq 
\Mc\norm{C}^{1-\tilde{\gg}/\gg}\norm{(-i\gw_k-A^\ast)^{-\gg}C^\ast}^{\tilde{\gg}/\gg} .
}
\end{lemma}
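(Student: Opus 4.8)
The plan is to deduce all three inequalities from the standard Moment Inequality \cite[Prop.~6.6.4]{haasefuncalc} applied to the family of operators $i\gw - A$, $\gw\in\R$, the essential point being that the sectoriality data of these operators can be chosen \emph{independently of} $\gw$, so that the interpolation constants the inequality produces are uniform. First I would record that $A - i\gw$ generates the semigroup $e^{-i\gw t}T(t)$, which satisfies $\norm{e^{-i\gw t}T(t)} = \norm{T(t)}\le M$ for the same bound $M=\sup_{t\ge 0}\norm{T(t)}$ and every $\gw$. Hence $-(A-i\gw)=i\gw-A$ is sectorial of angle at most $\pi/2$, with sectoriality constant controlled by $M$ alone and in particular independent of $\gw$. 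Since $\gs_p(A)\cap i\R=\varnothing$ the operator $i\gw-A$ is injective, and the Mean Ergodic Theorem \cite{arendtbatty} applied to $A-i\gw$ shows that its range is dense; thus $i\gw-A$ is an injective sectorial operator with dense range, its fractional powers $(i\gw-A)^s$ are well defined for all $s\in\R$ with $(i\gw-A)^0=I$, and the two-sided moment inequality is available.

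With this in hand each inequality is a specialization. For the first I would apply the moment inequality on the scale $0<\tilde{\ga}<\ga$, interpolating between $(i\gw-A)^0=I$ and $(i\gw-A)^\ga$; this yields exactly the stated bound with a constant $\Ma$ that, by the previous paragraph, may be taken the same for all $\gw$. For the second I would run the same argument on the scale $-\gb<-\tilde{\gb}<0$, interpolating between $(i\gw-A)^{-\gb}$ and $(i\gw-A)^0=I$, which produces the exponents $1-\tilde{\gb}/\gb$ and $\tilde{\gb}/\gb$ together with the uniform constant $\Mb$. The third is the same statement for the adjoint: since $X$ is a Hilbert space, $(i\gw-A)^\ast=-i\gw-A^\ast$ is again injective and sectorial with dense range and with the same constants (the adjoint semigroup $T(t)^\ast$ has the same bound $M$), so interpolating on the scale $-\gg<-\tilde{\gg}<0$ gives $\Mc$.

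Finally, for the statements about $B$ and $C$ I would note that Assumption~\ref{ass:Astandass}.2 gives $\ran(B)\subset\ran((i\gw_k-A)^\gb)=\Dom((i\gw_k-A)^{-\gb})$, so that $(i\gw_k-A)^{-\gb}B$ and, for the same reason, $(i\gw_k-A)^{-\tilde{\gb}}B$ are everywhere defined on $\C^p$ and hence bounded. The norm estimate then follows by applying the second inequality at $\gw=\gw_k$ to each vector $x=Bu$ with $\norm{u}\le 1$ and using $\norm{Bu}\le\norm{B}$ and $\norm{(i\gw_k-A)^{-\gb}Bu}\le\norm{(i\gw_k-A)^{-\gb}B}$; as both exponents $1-\tilde{\gb}/\gb$ and $\tilde{\gb}/\gb$ are nonnegative, taking the supremum over $\norm{u}\le 1$ yields the claimed operator-norm bound, and the estimate for $C$ is identical, applied to $C^\ast$ and the third inequality. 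The one point that genuinely requires care — and the only place where more than a citation is needed — is the uniformity in $\gw$: I must verify that the constant furnished by \cite[Prop.~6.6.4]{haasefuncalc} depends on $i\gw-A$ only through its sectoriality constant and angle together with the fixed exponents, which is precisely what lets a single $\Ma$, $\Mb$, $\Mc$ serve for all $\gw\in\R$ simultaneously.
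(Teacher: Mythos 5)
Your proposal is correct and follows essentially the same route as the paper: establish that the families $(i\gw-A)_{\gw\in\R}$ and $(-i\gw-A^\ast)_{\gw\in\R}$ (equivalently their inverses) are uniformly sectorial of angle $\pi/2$ with constants controlled by the semigroup bound $M$ alone, invoke the Moment Inequality of \cite[Prop.~6.6.4]{haasefuncalc} for each fixed $\gw$, and observe that the interpolation constant depends only on the sectoriality data and the exponents, hence can be chosen independently of $\gw$ (the paper cites \cite[Prop.~2.6.11]{haasefuncalc} for this last point). The operator-norm estimates for $B$ and $C$ are obtained exactly as you describe, by applying the vector inequalities to $x=Bu$ and $x=C^\ast u$ and taking suprema.
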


\begin{proof}
  Let $M\leq 1$ be such that $\norm{T(t)}\leq M$ for all $t\geq 0$. Now $\re\gl\norm{R(\gl,i\gw-A)}\leq M$ for all $\gl\in \C^+$ by the Hille--Yosida Theorem. Using this it is easy to show that the families $(i\gw-A)_{\gw\in\R}$ and $ (-i\gw-A^\ast)_{\gw\in\R}$ of operators are uniformly sectorial of angle $\pi/2$~\cite[Sec. 2.1]{haasefuncalc}. Since $i\gw-A$ is injective, also $((i\gw-A)\inv)_{\gw\in\R}$ and $( (-i\gw-A^\ast)\inv)_{\gw\in\R}$ are uniformly sectorial of angle $\pi/2$ by~\cite[Prop. 2.1.1]{haasefuncalc}.

  For a fixed $\gw\in\R$ the first inequalities now follow from~\cite[Prop. 6.6.4]{haasefuncalc}. However, by~\cite[Prop. 2.6.11]{haasefuncalc} and the uniform sectoriality of the operator families it is possible to choose $\Ma$, $\Mb$, and $\Mc$ to be independent of $\gw\in\R$.

  The boundedness of the operators $(i\gw_k - A)^{-\tilde{\gb}}B$ and $(-i\gw_k-A^\ast)^{-\tilde{\gg}}C^\ast$ and the remaining inequalities follow directly from applying the first inequalities to $x=By\in \Dom( (i\gw_k-A)^{-\gb})$ and $x=C^\ast y\in \Dom( (-i\gw_k - A^\ast)^{-\gg})$, respectively.  \end{proof}

  The last two inequalities in Lemma~\ref{lem:Momentineq} give us a way of estimating the graph norms for exponents $0<\tilde{\gb}<\gb$ and $0<\tilde{\gg}<\gg$. In particular, the following corollary shows that the norm $\norm{(i\gw_k-A)^{-\tilde{\gb}}B}$ can be made arbitrarily small if $\norm{B}$ and $\norm{(i\gw_k-A)^{-\gb}B}$ are small enough, and analogously for the operator $C$.

\begin{corollary}
  \label{cor:normrels}
  Let Assumption~\textup{\ref{ass:Astandass}} be satisfied, and let $0<\tilde{\gb}<\gb$, $0<\tilde{\gg}<\gg$, and $k\in \Igw$. If for some $\gd>0$ the operators $B$ and $C$ satisfy $\norm{B}<\gd$, $\norm{(i\gw_k-A)^{-\gb}B}<\gd$, $\norm{C}<\gd$, and $\norm{(-i\gw_k-A^\ast)^{-\gg}C^\ast}<\gd$, then 
  \eq{
  \norm{(i\gw_k-A)^{-\tilde{\gb}}B}<\Mb\gd, \qquad
  \norm{(-i\gw_k-A^\ast)^{-\tilde{\gg}}C^\ast}<\Mc\gd.
  }
\end{corollary}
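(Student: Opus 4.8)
The plan is to read the statement off directly from the last two inequalities of Lemma~\ref{lem:Momentineq}, the only genuine content being the arithmetic of the exponents. First I would invoke Lemma~\ref{lem:Momentineq} — whose hypotheses are exactly Assumption~\ref{ass:Astandass} together with $0<\tilde{\gb}<\gb$ — to obtain
\[
\norm{(i\gw_k-A)^{-\tilde{\gb}}B}\leq \Mb\,\norm{B}^{1-\tilde{\gb}/\gb}\,\norm{(i\gw_k-A)^{-\gb}B}^{\tilde{\gb}/\gb}.
\]
Since $0<\tilde{\gb}<\gb$, both exponents $1-\tilde{\gb}/\gb$ and $\tilde{\gb}/\gb$ lie in $(0,1)$, so the map $t\mapsto t^{s}$ is strictly increasing on $[0,\infty)$ for each of them. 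Substituting the hypotheses $\norm{B}<\gd$ and $\norm{(i\gw_k-A)^{-\gb}B}<\gd$ into the right-hand side then bounds it strictly by $\Mb\,\gd^{1-\tilde{\gb}/\gb}\,\gd^{\tilde{\gb}/\gb}$.

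The second and final step is the observation that the two exponents sum to one, so the product of powers of $\gd$ collapses: $\gd^{1-\tilde{\gb}/\gb}\gd^{\tilde{\gb}/\gb}=\gd$. This yields $\norm{(i\gw_k-A)^{-\tilde{\gb}}B}<\Mb\gd$, which is the first claimed inequality. The bound $\norm{(-i\gw_k-A^\ast)^{-\tilde{\gg}}C^\ast}<\Mc\gd$ is obtained verbatim from the corresponding estimate in Lemma~\ref{lem:Momentineq}, using $0<\tilde{\gg}<\gg$, $\norm{C}<\gd$, and $\norm{(-i\gw_k-A^\ast)^{-\gg}C^\ast}<\gd$.

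There is essentially no obstacle here, as the corollary is a direct specialization of Lemma~\ref{lem:Momentineq}. The only point deserving a moment's care is the strictness of the conclusion: one chains the non-strict moment estimate ($\leq$) with the strict bound on its right-hand side, which is legitimate because $\gd>0$ and $\Mb,\Mc\geq 1$ guarantee that the dominating quantity $\Mb\gd$ (resp.\ $\Mc\gd$) stays strictly positive even in the degenerate case where $\norm{B}$, $\norm{C}$, or one of the full graph norms vanishes.
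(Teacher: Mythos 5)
Your proof is correct and is exactly the intended argument: the corollary is a direct substitution of the bounds $\norm{B},\norm{(i\gw_k-A)^{-\gb}B}<\gd$ (and the analogous ones for $C$) into the last two inequalities of Lemma~\ref{lem:Momentineq}, using that the exponents $1-\tilde{\gb}/\gb$ and $\tilde{\gb}/\gb$ sum to one. The paper states the corollary without a written proof precisely because it follows in this immediate way, so there is nothing to add.
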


We begin the proof of Theorem~\ref{thm:specpert} by showing that we can choose $\gd>0$ in such a way that $\norm{CR(\gl,A)B}\leq c<1$ for all $\gl\in \bigcup_k \Omega_k$, where
$\Omega_k = \setm{\gl\in \C}{\re\gl \geq 0, ~0< \abs{\gl-i\gw_k} \leq \eps_A}$ (see Figure~\ref{fig:Omegak}). 

\begin{figure}[ht]
  \begin{center}
    \includegraphics[width=80pt]{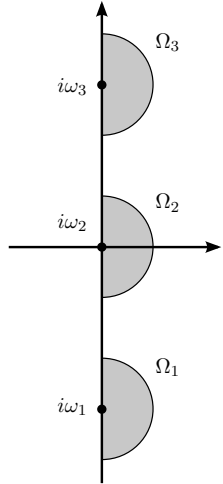}
%    \begin{overpic}[width=110pt]{Omegak} 
%      \put(10,15){\mbox{\small$i\gw_1$}}
%      \put(30,23){\mbox{\small$\Omega_1$}}
%      \put(10,53){\mbox{\small$i\gw_2$}}
%      \put(30,57){\mbox{\small$\Omega_2$}}
%      \put(10,81){\mbox{\small$i\gw_3$}}
%      \put(30,90){\mbox{\small$\Omega_3$}}
%    \end{overpic}
  \end{center}
  \caption{The domains $\Omega_k$.}
  \label{fig:Omegak}
\end{figure}

\begin{lemma}
  \label{lem:Okbdd}
  If Assumption~\textup{\ref{ass:Astandass}} is satisfied, then there exists $M_0\geq 1$ such that 
  \eq{
  \sup_{\gl\in \Omega_k} \abs{\gl-i\gw_k}^\ga \norm{R(\gl,A)}\leq M_0
  }
  for every $k\in \Igw$.
\end{lemma}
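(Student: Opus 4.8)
The plan is to transfer the resolvent bound \eqref{eq:Aresolventgrowthorder}, which Assumption~\ref{ass:Astandass} provides only on the imaginary axis, into the closed right half-plane region $\Omega_k$. The two tools are the Hille--Yosida estimate $\norm{R(\gl,A)}\le M/\re\gl$ for $\re\gl>0$ (available because strong stability forces $T(t)$ to be uniformly bounded, say $\norm{T(t)}\le M$), and a Neumann-series expansion of $R(\gl,A)$ around the orthogonal projection $i\gw$ of $\gl$ onto $i\R$. Since the constants produced below depend only on $M$, $M_A$, $\ga$, and $\eps_A$, the resulting $M_0$ will automatically be uniform in $k$.

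First I would fix $k\in\Igw$ and $\gl\in\Omega_k$, write $\gw=\im\gl$, and set $r=\abs{\gl-i\gw_k}$ and $h=\abs{\gw-\gw_k}$; then $\re\gl=\abs{\gl-i\gw}$, $0<r\le\eps_A$, $h\le r$, and $r^2=(\re\gl)^2+h^2$. I split $\Omega_k$ at the threshold $c:=1/(2M_A)$. On the region $\re\gl\le c\,h^\ga$ (which forces $h>0$, since $\re\gl\ge 0$ and $\gl\ne i\gw_k$, so the imaginary-axis bound applies with $0<h\le\eps_A$), the resolvent identity gives $R(\gl,A)=R(i\gw,A)\bigl(I+(\gl-i\gw)R(i\gw,A)\bigr)\inv$, and $\re\gl\,\norm{R(i\gw,A)}\le c\,h^\ga\cdot M_A/h^\ga=\tfrac12$, so the Neumann series converges and $\norm{R(\gl,A)}\le 2\norm{R(i\gw,A)}\le 2M_A/h^\ga$. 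Multiplying by $r^\ga$ and using $(r/h)^2=1+(\re\gl)^2/h^2\le 1+c^2 h^{2(\ga-1)}\le 1+c^2\max\set{1,\eps_A^{2(\ga-1)}}$ yields a bound on $r^\ga\norm{R(\gl,A)}$ independent of $\gl$ and $k$.

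Next I would treat the complementary region $\re\gl>c\,h^\ga$ purely with Hille--Yosida, $r^\ga\norm{R(\gl,A)}\le M\,r^\ga/\re\gl$, checking that $\re\gl\gtrsim r^\ga$ there by a further split. If $\re\gl\ge r/2$, then $r^\ga/\re\gl\le 2r^{\ga-1}\le 2\eps_A^{\ga-1}$ (using $\ga\ge1$ and $r\le\eps_A$). If instead $\re\gl<r/2$, then $h^2=r^2-(\re\gl)^2>\tfrac34 r^2$, so $h>\tfrac{\sqrt3}{2}r$ and hence $\re\gl>c\,h^\ga>c\,(\sqrt3/2)^\ga r^\ga$, giving $r^\ga/\re\gl<1/\bigl(c\,(\sqrt3/2)^\ga\bigr)$. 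Taking $M_0\ge1$ to be the largest of these three constants completes the argument.

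The main obstacle is precisely the intermediate regime $c\,h^\ga<\re\gl<r/2$: here $\re\gl$ is too large for the Neumann series around $i\gw$ to converge, yet a priori too small for Hille--Yosida to beat the factor $r^\ga$. The resolution is the geometric observation that once $\re\gl<r/2$ the projection distance $h$ is comparable to $r$, so the defining inequality $\re\gl>c\,h^\ga$ upgrades to $\re\gl\gtrsim r^\ga$, which is exactly what Hille--Yosida needs. I would also note that the edge case $\gw=\gw_k$ (so $h=0$ and $\gl$ lies on the real ray to the right of $i\gw_k$) satisfies $\re\gl=r\ge r/2$ and thus falls into the Hille--Yosida branch, never into the Neumann region, so no division by $h=0$ ever occurs.
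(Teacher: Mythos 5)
Your argument is correct, and every constant you produce depends only on $M$, $M_A$, $\ga$ and $\eps_A$, so the uniformity in $k$ comes for free as you claim; the edge cases ($\re\gl=0$ and $\gw=\gw_k$) are also handled cleanly. However, your route differs from the paper's. You partition $\Omega_k$ into three regions according to the relative sizes of $\re\gl$, $\abs{\gw-\gw_k}^\ga$ and $\abs{\gl-i\gw_k}$, invert $I+(\gl-i\gw)R(i\gw,A)$ by a Neumann series in the near-axis region (which is what forces the threshold $c=1/(2M_A)$), and bridge the intermediate regime with the geometric observation that $\re\gl<r/2$ makes $h$ comparable to $r$. The paper avoids all of this: after dispatching the two boundary cases, it uses the elementary inequality $\abs{\gl-i\gw_k}^\ga\le 2^{\ga/2}\bigl(\mu+\abs{\gw-\gw_k}^\ga\bigr)$ (valid since $\ga\ge1$ and $0<\mu\le\eps_A\le1$) to split the weight into two pieces, absorbs the $\mu$-piece by Hille--Yosida, and absorbs the $\abs{\gw-\gw_k}^\ga$-piece by a \emph{single, non-inverted} application of the resolvent identity $R(\gl,A)=R(i\gw,A)+\mu R(\gl,A)R(i\gw,A)$, using that $\mu\norm{R(\gl,A)}\le M$ holds everywhere in $\C^+$ — so no smallness condition and no case split on the size of $\re\gl$ is ever needed. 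Your version is longer but makes the mechanism more transparent (Neumann perturbation off the axis where the resolvent is large, Hille--Yosida where $\re\gl$ dominates); the paper's version is shorter because it never asks the perturbation term to be small, only bounded.
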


\begin{proof}
  Let $M>0$ be such that $\norm{T(t)}\leq M$. From Assumption~\ref{ass:Astandass} we have 
\eq{
 \sup_{0<\abs{\gw-\gw_k}\leq \eps_A} \abs{\gw-\gw_k}^\ga \norm{R(i\gw,A)} \leq M_A.
}
The Hille--Yosida Theorem~\cite[Thm. II.3.8]{engelnagel} implies that $\re\gl\norm{R(\gl,A)}\leq M$ whenever $\re\gl>0$.

Let $\gl = \mu+i\gw\in \Omega_k$. For $\mu=0$ the bound $\abs{\gl-i\gw_k}^\ga \norm{R(\gl,A)} = \abs{\gw-\gw_k}^\ga \norm{R(i\gw,A)}\leq M_A$ follows directly from~\eqref{eq:Aresolventgrowthorder}. 
On the other hand, if $\gw=\gw_k$ and $\gl=\mu>0$, then the Hille--Yosida Theorem implies
\eq{
\abs{\gl-i\gw_k}^\ga \norm{R(\gl,A)}
=\mu^\ga \norm{R(\gl,A)}
\leq \mu \norm{R(\gl,A)}
\leq M
}
since $\mu^\ga \leq \mu$ due to the fact that $\ga\geq 1$ and $0<\mu\leq \eps_A\leq 1$.
It remains to consider the case $\gl = \mu + i\gw \in \Omega_k$ with $\mu> 0$ and $\gw\neq \gw_k$. In particular, we then have $0<\abs{\gw-\gw_k} \leq \eps_A$ and $0<\mu\leq \eps_A\leq 1$. 
Since $\ga \geq 1 $ and $0<\mu\leq 1$, we have $\mu^\ga \leq \mu$ and
\eq{
\abs{\gl-i\gw_k}^\ga
&= (\mu^2 + (\gw-\gw_k)^2)^{\ga/2} 
\leq (2\max\set{\mu^2 , (\gw-\gw_k)^2})^{\ga/2}
= 2^{\ga/2} \max\set{\mu^\ga , \abs{\gw-\gw_k}^\ga}\\
&\leq 2^{\ga/2} (\mu^\ga + \abs{\gw-\gw_k}^\ga)
\leq 2^{\ga/2} (\mu + \abs{\gw-\gw_k}^\ga),
}
and thus using the resolvent identity $R(\gl,A) = R(i\gw,A) + \mu R(\gl,A)R(i\gw,A)$ we get
\eq{
\MoveEqLeft\abs{\gl-i\gw_k}^\ga \norm{R(\gl,A)} 
\leq 2^{\ga/2} (\mu + \abs{\gw-\gw_k}^\ga) \norm{R(\gl,A)} \\
&= 2^{\ga/2} \mu \norm{R(\gl,A)}+ 2^{\ga/2}\abs{\gw-\gw_k}^\ga \norm{R(i\gw,A) + \mu R(\gl,A)R(i\gw,A)} \\
&\leq 2^{\ga/2} M+ 2^{\ga/2}\abs{\gw-\gw_k}^\ga \norm{R(i\gw,A)} (1+ \mu \norm{R(\gl,A)} )\\
&\leq 2^{\ga/2} \left( M +  M_A (1+ M ) \right).
}
Since in each of the situations the bound for $\abs{\gl-i\gw_k}^\ga \norm{R(\gl,A)}$ is independent of $k\in \Igw$, this concludes the proof.
\end{proof}

\begin{lemma} 
  \label{lem:ARfracbnd}
  Let Assumption~\textup{\ref{ass:Astandass}} be satisfied and denote $\ga = n+\tilde{\ga}$ with $n\in\N$ and $0\leq \tilde{\ga}<1$. There exists $M_1\geq 1$ (not depending on $k\in\Igw$) such that 
  \eq{
  \sup_{\gl\in\Omega_k} \abs{\gl-i\gw_k}^n \norm{(i\gw_k-A)^{\tilde{\ga}}R(\gl,A)}\leq M_1
  }
  for all $k\in\Igw$.
\end{lemma}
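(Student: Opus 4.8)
The plan is to interpolate between the resolvent bound of \refLem{lem:Okbdd} and a bound for $(i\gw_k-A)R(\gl,A)$, using the Moment Inequality behind \refLem{lem:Momentineq}. I would first dispose of the degenerate case $\tilde{\ga}=0$: there $\ga=n$ and the claim reduces to $\abs{\gl-i\gw_k}^n\norm{R(\gl,A)}=\abs{\gl-i\gw_k}^\ga\norm{R(\gl,A)}\leq M_0$, which is exactly \refLem{lem:Okbdd}, so $M_1=M_0$ works. For the remaining case $0<\tilde{\ga}<1$ (recall $\ga\geq1$ forces $n\geq1$), the guiding idea is that applying $(i\gw_k-A)^{\tilde{\ga}}$ to the resolvent should improve its blow-up rate near $i\gw_k$ from $\abs{\gl-i\gw_k}^{-\ga}$ to $\abs{\gl-i\gw_k}^{-(\ga-\tilde{\ga})}=\abs{\gl-i\gw_k}^{-n}$.

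The two ingredients I would assemble are as follows. First, \refLem{lem:Okbdd} gives $\norm{R(\gl,A)}\leq M_0\abs{\gl-i\gw_k}^{-\ga}$ for $\gl\in\Omega_k$. Second, the resolvent identity yields $(i\gw_k-A)R(\gl,A)=I-(\gl-i\gw_k)R(\gl,A)$, so that
\eq{
\norm{(i\gw_k-A)R(\gl,A)}\leq 1+\abs{\gl-i\gw_k}\,\norm{R(\gl,A)}\leq 1+M_0\abs{\gl-i\gw_k}^{1-\ga}\leq (1+M_0)\abs{\gl-i\gw_k}^{1-\ga},
}
where the last step uses $\ga\geq1$ and $0<\abs{\gl-i\gw_k}\leq\eps_A\leq1$, so that $\abs{\gl-i\gw_k}^{1-\ga}\geq1$. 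I would then invoke the Moment Inequality for the uniformly sectorial family $(i\gw_k-A)$ (the uniform sectoriality of angle $\pi/2$ was established in the proof of \refLem{lem:Momentineq}) with intermediate exponent $\tilde{\ga}$ and \emph{top exponent $1$}: there is a constant $M'\geq1$, independent of $k$, with $\norm{(i\gw_k-A)^{\tilde{\ga}}x}\leq M'\norm{x}^{1-\tilde{\ga}}\norm{(i\gw_k-A)x}^{\tilde{\ga}}$ for all $x\in\Dom(A)$. Applying this to $x=R(\gl,A)y$, inserting the two bounds, and taking the supremum over $\norm{y}\leq1$ gives
\eq{
\norm{(i\gw_k-A)^{\tilde{\ga}}R(\gl,A)}\leq M'\bigl(M_0\abs{\gl-i\gw_k}^{-\ga}\bigr)^{1-\tilde{\ga}}\bigl((1+M_0)\abs{\gl-i\gw_k}^{1-\ga}\bigr)^{\tilde{\ga}}.
}

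It then remains only to read off the exponent of $\abs{\gl-i\gw_k}$, which is $-\ga(1-\tilde{\ga})+(1-\ga)\tilde{\ga}=\tilde{\ga}-\ga=-n$. Hence $\abs{\gl-i\gw_k}^n\norm{(i\gw_k-A)^{\tilde{\ga}}R(\gl,A)}\leq M'M_0^{1-\tilde{\ga}}(1+M_0)^{\tilde{\ga}}=:M_1$, a bound independent of both $\gl\in\Omega_k$ and $k\in\Igw$.

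The main obstacle I anticipate is a modelling choice rather than a hard estimate: one must resist applying \refLem{lem:Momentineq} verbatim, with top exponent $\ga$, because $(i\gw_k-A)^\ga R(\gl,A)$ behaves like $z^{\ga-1}$ at infinity and is therefore \emph{unbounded} for $\ga>1$, so that route stalls. Taking the top exponent to be $1$ is legitimate, since the family $(i\gw_k-A)$ is uniformly sectorial of angle $\pi/2$ and \cite[Prop. 6.6.4]{haasefuncalc} applies for any admissible pair of exponents; this is precisely what makes the integer part $n$ emerge from the resolvent identity and the fractional part $\tilde{\ga}$ from the interpolation. The only point genuinely needing care is the $k$-uniformity of the constant $M'$, which follows from the uniform sectoriality exactly as in the proof of \refLem{lem:Momentineq}.
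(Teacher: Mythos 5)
Your proposal is correct and follows essentially the same route as the paper: the degenerate case $\tilde{\ga}=0$ via \refLem{lem:Okbdd}, and for $0<\tilde{\ga}<1$ the Moment Inequality with top exponent $1$ applied to $R(\gl,A)$, combined with the identity $(i\gw_k-A)R(\gl,A)=I-(\gl-i\gw_k)R(\gl,A)$ and the bound of \refLem{lem:Okbdd}. The only (cosmetic) difference is that you substitute $\norm{R(\gl,A)}\leq M_0\abs{\gl-i\gw_k}^{-\ga}$ into both interpolation factors before reading off the exponent $-n$, whereas the paper keeps $\norm{R(\gl,A)}$ symbolic, splits via $(a+b)^{\tilde{\ga}}\leq 2^{\tilde{\ga}}(a^{\tilde{\ga}}+b^{\tilde{\ga}})$, and then checks $n/(1-\tilde{\ga})\geq\ga$; your bookkeeping is slightly cleaner but the argument is the same.
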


\begin{proof}
  By Lemma~\ref{lem:Okbdd} there exists $M_0\geq 1$ such that $\abs{\gl-i\gw_k}^\ga \norm{R(\gl,A)}\leq M_0$ for all $k\in\Igw$.  
Let $k\in\Igw$, $\gl\in \Omega_k$, and denote $R_\gl = R(\gl,A)$, $A_k = A-i\gw_k$, and $\gl_k=\gl-i\gw_k$ for brevity.

If $\ga=n$ and $\tilde{\ga}=0$, we have 
\eq{
\abs{\gl_k}^n \norm{(-A_k)^{\tilde{\ga}}R_\gl}
=\abs{\gl_k}^\ga \norm{R_\gl}\leq M_0.
}
Thus the claim is satisfied with $M_1=M_0$, which is independent of $k\in\Igw$.

If $0<\tilde{\ga}<1$, then 
by Lemma~\ref{lem:Momentineq} there exists a constant $M_{\tilde{\ga}}$ independent of $k\in\Igw$ such that $\norm{(-A_k)^{\tilde{\ga}}x}\leq M_{\tilde{\ga}} \norm{x}^{1-\tilde{\ga}}\norm{(-A_k)x}^{\tilde{\ga}}$ for all $x\in \Dom( A)$. This further implies $\norm{(-A_k)^{\tilde{\ga}}R_\gl}\leq M_{\tilde{\ga}} \norm{R_\gl}^{1-\tilde{\ga}}\norm{(-A_k)R_\gl}^{\tilde{\ga}}$. Using
\eq{
(-A_k)R_\gl=(i\gw_k-A)R_\gl = (i\gw_k-\gl +\gl - A )R_\gl = (i\gw_k-\gl) R_\gl + I = -\gl_k R_\gl + I
}
and the scalar inequality $(a+b)^{\tilde{\ga}} \leq 2^{\tilde{\ga}}(a^{\tilde{\ga}}+b^{\tilde{\ga}})$
we get 
\eq{
\MoveEqLeft \abs{\gl_k}^n  \norm{(-A_k)^{\tilde{\ga}} R_\gl} 
\leq M_{\tilde{\ga}}  \abs{\gl_k}^n \norm{R_\gl}^{1-\tilde{\ga}} \norm{(-A_k) R_\gl}^{\tilde{\ga}}
\leq M_{\tilde{\ga}} \abs{\gl_k}^n \norm{R_\gl}^{1-\tilde{\ga}} (1+\abs{\gl_k}\norm{R_\gl})^{\tilde{\ga}}\\
&\leq 2^{\tilde{\ga}}M_{\tilde{\ga}}  \abs{\gl_k}^n \norm{R_\gl}^{1-\tilde{\ga}} (1+\abs{\gl_k}^{\tilde{\ga}}\norm{R_\gl}^{\tilde{\ga}})
\leq 2^{\tilde{\ga}}M_{\tilde{\ga}} \left[ (\abs{\gl_k}^{\frac{n}{1-\tilde{\ga}}} \norm{R_\gl})^{1-\tilde{\ga}} +\abs{\gl_k}^{n+\tilde{\ga}}\norm{R_\gl} \right] .
}
Since $n= \floor{\ga}\geq 1$ we have 
\eq{
\frac{n}{1-\tilde{\ga}}
=\frac{n(n+\tilde{\ga})}{(1-\tilde{\ga})(n+\tilde{\ga})}
=\frac{n(n+\tilde{\ga})}{n-\tilde{\ga}(n-1) -\tilde{\ga}^2}
\geq\frac{n(n+\tilde{\ga})}{n}
=n+\tilde{\ga}
= \ga.
}
Since $\gl\in \Omega_k$, we have $\abs{\gl_k}\leq\eps_A\leq 1$, and thus $\abs{\gl_k}^{\frac{n}{1-\tilde{\ga}}}\leq \abs{\gl_k}^{\ga}$, and
\eq{
\MoveEqLeft \abs{\gl_k}^n  \norm{(-A_k)^{\tilde{\ga}} R_\gl} 
\leq 2^{\tilde{\ga}}M_{\tilde{\ga}} \left[ (\abs{\gl_k}^{\frac{n}{1-\tilde{\ga}}} \norm{R_\gl})^{1-\tilde{\ga}} +\abs{\gl_k}^{n+\tilde{\ga}}\norm{R_\gl} \right] \\
&\leq 2^{\tilde{\ga}}M_{\tilde{\ga}}  \left[ (\abs{\gl_k}^\ga \norm{R_\gl})^{1-\tilde{\ga}} +\abs{\gl_k}^\ga\norm{R_\gl} \right]
\leq 2^{\tilde{\ga}}M_{\tilde{\ga}}  \left[ M_0^{1-\tilde{\ga}} +M_0 \right]
\leq 2^{\tilde{\ga}+1}M_{\tilde{\ga}} M_0  
}
since it was assumed that $M_0\geq 1$. Therefore the claim holds with $M_1=2^{\tilde{\ga}+1}M_{\tilde{\ga}}M_0$, which is independent of $k\in\Igw$.  
\end{proof}

\begin{lemma}
  \label{lem:ARboundzero}
Let Assumption~\textup{\ref{ass:Astandass}} be satisfied with $\ga = \gb+\gg$ and let $0<c<1$. There exists $\gd>0$ such that if $\norm{B}<\gd$, $\norm{C}<\gd$,
  \eq{
  \norm{(i\gw_k-A)^{-\gb} B}<\gd, \quad \mbox{and} \quad \norm{(i\gw_k-A^\ast)^{-\gg}C^\ast}<\gd,
  }
  for all $k\in \Igw$,
  then $\norm{CR(\gl,A)B}\leq c<1$ for all $\gl\in \bigcup_{k\in\Igw}\Omega_k$.
\end{lemma}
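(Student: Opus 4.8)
The plan is to show that $\gd$ can be chosen so that $\norm{CR(\gl,A)B}\le K\gd^2$ holds for all $\gl\in\bigcup_{k\in\Igw}\Omega_k$ with a constant $K$ independent of $\gl$ and of $k$; taking $\gd\le\sqrt{c/K}$ then gives $\norm{CR(\gl,A)B}\le c<1$. The starting point is to factor the perturbation through the fractional powers centred at $i\gw_k$. Since $\ga=\gb+\gg$ and functions of $A$ commute with $R(\gl,A)$, one formally has
\eq{
CR(\gl,A)B = \bigl[C(i\gw_k-A)^{-\gg}\bigr]\,(i\gw_k-A)^{\ga}R(\gl,A)\,\bigl[(i\gw_k-A)^{-\gb}B\bigr],
}
where the two outer operators are $\tilde C:=C(i\gw_k-A)^{-\gg}$ (whose norm equals $\norm{(-i\gw_k-A^\ast)^{-\gg}C^\ast}<\gd$) and $\tilde B:=(i\gw_k-A)^{-\gb}B$ (with $\norm{\tilde B}<\gd$). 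By Corollary~\ref{cor:normrels} the analogous graph norms for any smaller exponents $0<\tilde\gb<\gb$, $0<\tilde\gg<\gg$ are likewise of order $\gd$, which is what lets me redistribute powers freely.

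The heart of the argument is a uniform bound, over $\gl\in\Omega_k$ and over $k$, for the middle factor $(i\gw_k-A)^{\ga}R(\gl,A)$ once it is sandwiched between $\tilde C$ and $\tilde B$. This factor is genuinely unbounded on its own, and moreover for $\gb$ or $\gg$ exceeding $1$ the range conditions on $\tilde B,\tilde C^\ast$ are needed even to make the sandwiched expression well defined. I would therefore write $\ga=n+\tilde\ga_0$ with $n=\floor{\ga}$ and $0\le\tilde\ga_0<1$, and peel the $n$ integer powers of $(i\gw_k-A)$ using the resolvent identity
\eq{
(i\gw_k-A)R(\gl,A) = I - \gl_k R(\gl,A), \qquad \gl_k:=\gl-i\gw_k,
}
distributing them onto the ranges of $\tilde B$ and $\tilde C^\ast$ so that applications of positive powers always recombine into the bounded operators $B$ and $C^\ast$. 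The single remaining fractional power $(i\gw_k-A)^{\tilde\ga_0}R(\gl,A)$ is then estimated by Lemma~\ref{lem:ARfracbnd}, which furnishes the bound $M_1\abs{\gl_k}^{-n}$, while each factor $\gl_k$ produced by the resolvent identity and each application of Lemma~\ref{lem:Okbdd} contributes compensating powers of $\abs{\gl_k}$. Because $\abs{\gl_k}\le\eps_A\le1$ and $\ga\ge1$, these powers combine to a quantity bounded uniformly on $\Omega_k$, and the intermediate graph norms that appear are kept of order $\gd$ by the moment inequality of Lemma~\ref{lem:Momentineq}. The $k$-independence of the constants $M_0,M_1,\Ma,\Mb,\Mc$ in those lemmas gives the uniformity in $k\in\Igw$.

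Collecting the estimates produces $\norm{CR(\gl,A)B}\le K\gd^2$ on $\bigcup_{k}\Omega_k$, with $K$ depending only on $M_0,M_1,\Mb,\Mc,n$ and $\ga$, after which $\gd$ is fixed so that $K\gd^2\le c$. I expect the main obstacle to be precisely this uniform control near the singular points $i\gw_k$: the crude estimate $\norm{CR(\gl,A)B}\le\norm{C}\,\norm{R(\gl,A)}\,\norm{B}\sim\gd^2\abs{\gl_k}^{-\ga}$ blows up as $\gl\to i\gw_k$, so the whole point is to trade the resolvent growth of order $\abs{\gl_k}^{-\ga}$ against the smallness of the graph norms $\norm{(i\gw_k-A)^{-\gb}B}$ and $\norm{(-i\gw_k-A^\ast)^{-\gg}C^\ast}$ in an exactly balanced way. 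Making the bookkeeping of the integer powers, the fractional remainder, and the interpolation constants cancel uniformly in $\gl$ and $k$ is the delicate part, and the hypothesis $\gb+\gg=\ga$ is exactly what makes the two sides of this trade match.
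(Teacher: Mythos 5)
Your plan follows the paper's proof essentially step for step: peel off the integer parts of $\gb$ and $\gg$ with the identity $(i\gw_k-A)R(\gl,A)=I-\gl_k R(\gl,A)$, absorb the fractional remainder into $(i\gw_k-A)^{\tilde{\ga}}R(\gl,A)$ via Lemma~\ref{lem:ARfracbnd}, and control the intermediate graph norms with the moment inequality of Lemma~\ref{lem:Momentineq} and Corollary~\ref{cor:normrels}, all with $k$-independent constants, ending with a bound of order $\gd^2$. The one detail worth making explicit is the case where the fractional parts satisfy $\tilde{\gb}+\tilde{\gg}=\tilde{\ga}+1>1$, so that only $\floor{\ga}-1$ integer powers can be peeled and Lemma~\ref{lem:ARfracbnd} does not yet apply; the paper resolves this by peeling one further unit power split fractionally as $\gb_1+\gg_1=1$ with $\gb_1=\tilde{\gb}/(\tilde{\ga}+1)$ and $\gg_1=\tilde{\gg}/(\tilde{\ga}+1)$, which your remark about redistributing powers via Corollary~\ref{cor:normrels} covers in spirit.
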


\begin{proof}
  By Lemma~\ref{lem:Okbdd} there exists $M_0\geq 1$ such that $\abs{\gl-i\gw_k}^\ga \norm{R(\gl,A)}\leq M_0$ for all $k\in \Igw$ and $\gl\in\Omega_k$. Let $k\in \Igw$ be fixed.

  Choose $m,n\in \N_0$ and $\tilde{\ga},\tilde{\gb},\tilde{\gg}\in [0,1)$ such that $\ga= \floor{\ga}+\tilde{\ga}$, $\gb = m+ \tilde{\gb}$ and $\gg = n+\tilde{\gg}$. Since $\gb + \gg=\ga$, we have either $m+n=\floor{\ga}$ and $\tilde{\gb}+\tilde{\gg}=\tilde{\ga}$, or alternatively, $m+n+1=\floor{\ga}$ and $\tilde{\gb}+\tilde{\gg}=\tilde{\ga}+1$.
 
  For $0<r\leq \gb$ denote $B_r = (i\gw_k-A)^{-r}B$, and for $0<r\leq \gg$ denote $\tilde{C}_r= (-i\gw_k-A^\ast)^{-r}C^\ast$.

In order to shows the existence of an appropriate bound $\gd>0$, we begin by showing that the norms $\norm{CR(\gl,A)B}$ for $\gl\in\Omega_k$ can be estimated using the norms $\norm{B}$, $\norm{C}$, $\norm{(i\gw_k-A)^{-\gb}B}$ and $\norm{(-i\gw_k-A^\ast)^{-\gg}C^\ast}$.

Let $x,y\in \Y$ be such that $\norm{x}=\norm{y}=1$.
  For brevity, denote $R_\gl = R(\gl,A)$, $A_k = A-i\gw_k$, and $\gl_k=\gl-i\gw_k$. We have $(-A_k)R_\gl=(i\gw_k-A)R_\gl = (i\gw_k-\gl +\gl - A )R_\gl = (i\gw_k-\gl) R_\gl + I = -\gl_k R_\gl + I$. For all $\gl\in \Omega_k$ we have 
\eq{
\MoveEqLeft[1]\abs{\iprod{CR_\gl B x}{y}}
=\abs{\iprod{C(-A_k)R_\gl B_1x}{y}}
=\abs{\iprod{C( -\gl_k R_\gl + I )B_1x}{y}}\\
&\leq \abs{\gl_k}\abs{\iprod{ CR_\gl B_1x}{y}} + \norm{CB_1} 
\leq \abs{\gl_k}\abs{ \iprod{C(-A_k)R_\gl B_2x}{y}} + \norm{CB_1} \\
&\leq \abs{\gl_k}^2\abs{ \iprod{CR_\gl B_2 x}{y}} +\abs{\gl_k}\norm{CB_2} +  \norm{CB_1}\\
&\leq \dots \leq \abs{\gl_k}^m \abs{\iprod{CR_\gl B_m x}{y}} +  \sum_{l=1}^m \abs{\gl_k}^{l-1}\norm{CB_l}\\
&\leq \abs{\gl_k}^m \abs{\iprod{CR_\gl B_m x}{y}} +  \sum_{l=1}^m \norm{B_l}\norm{C}
}
since $\abs{\gl_k}\leq \eps_A\leq 1$.
We can further estimate the term $\abs{\iprod{CR_\gl B_m x}{y}}$ by 
\eq{
\MoveEqLeft
\abs{\iprod{CR_\gl B_m x}{y}}
=\abs{\iprod{R_\gl B_m x}{C^\ast y}}
=\abs{\iprod{(-A_k)R_\gl B_m x}{(-A_k^\ast)\inv C^\ast y}}\\
&=\abs{\iprod{(-\gl_kR_\gl+I) B_m x}{\tilde{C}_1 y}}
\leq \abs{\gl_k} \abs{\iprod{R_\gl B_m x}{\tilde{C}_1 y}} + \norm{B_m}\norm{\tilde{C}_1 } \\
&\leq \abs{\gl_k}^2 \abs{\iprod{R_\gl B_m x}{\tilde{C}_2 y}} + \abs{\gl_k}\norm{B_m}\norm{\tilde{C}_2} + \norm{B_m}\norm{\tilde{C}_1 } \\
&\leq \cdots\leq \abs{\gl_k}^n \abs{\iprod{R_\gl B_m x}{\tilde{C}_my}} + \sum_{l=1}^n \abs{\gl_k}^{l-1} \norm{B_m}\norm{\tilde{C}_l} \\
&\leq \abs{\gl_k}^n \abs{\iprod{R_\gl B_m x}{\tilde{C}_ny}} + \sum_{l=1}^n \norm{B_n}\norm{\tilde{C}_l}  .
}
Combining these estimates we get
\eqn{
\label{eq:Okbddsumest}
\MoveEqLeft\abs{\iprod{CR_\gl Bx}{y}} 
\leq  \abs{\gl_k}^{m+n} \abs{\iprod{R_\gl B_mx}{\tilde{C}_n y}} + \sum_{l=1}^n \norm{B_m}\norm{\tilde{C}_l} +  \sum_{l=1}^m \norm{B_l}\norm{C}.
}
We have from Lemma~\ref{lem:Momentineq} that for all $l$ the norms $\norm{B_l}$ and $\norm{\tilde{C}_l}$ can estimated using the norms $\norm{B}$, $\norm{C}$, $\norm{(i\gw_k-A)^{-\gb}B}$, and $\norm{(-i\gw_k-A^\ast)^{-\gg}C^\ast}$, and these estimates do not depend on $k\in \Igw$.
However, we need to consider the term $\abs{\gl_k}^{m+n} \abs{\iprod{R_\gl B_mx}{\tilde{C}_n y}}$ separately. 

If $\tilde{\gb}+\tilde{\gg}=\tilde{\ga}<1$, then we have from Lemma~\ref{lem:ARfracbnd} that there exists $M_1\geq 1$ (independent of $k\in\Igw$) such that $\abs{\gl_k}^{n+m} \norm{(-A_k)^{\tilde{\ga}}R_\gl}\leq M_1$ for all $\gl\in\Omega_k$, and thus
\eq{
\MoveEqLeft \abs{\gl_k}^{m+n} \abs{\iprod{R_\gl B_mx}{\tilde{C}_ny}}
=\abs{\gl_k}^{m+n} \abs{ \iprod{(-A_k)^{\tilde{\gb}+\tilde{\gg}} R_\gl B_\gb x }{\tilde{C}_\gg y}}\\
&\leq \abs{\gl_k}^{m+n}  \norm{(-A_k)^{\tilde{\ga}} R_\gl} \norm{B_\gb}\norm{\tilde{C}_\gg}
\leq M_1 \norm{B_\gb}\norm{\tilde{C}_\gg}.
}

Alternatively, if $\tilde{\gb}+\tilde{\gg}=\tilde{\ga}+1\geq 1$, then $\floor{\ga}=m+n+1$ and we necessarily have $0<\tilde{\gb},\tilde{\gg}<1$. We can choose $\gb_1 = \tilde{\gb}/(\tilde{\ga}+1)$ and $\gg_1 = \tilde{\gg}/(\tilde{\ga}+1)$, which satisfy $0<\gb_1\leq \tilde{\gb}$, $0<\gg_1\leq \tilde{\gg}$, and $\gb_1+\gg_1=1$. Now we can use $(\tilde{\gb}-\gb_1) + (\tilde{\gg}-\gg_1) = \tilde{\gb} + \tilde{\gg} -1 = \tilde{\ga}<1$ and estimate
\eq{
\MoveEqLeft \abs{\gl_k}^{m+n} \abs{\iprod{R_\gl B_mx}{\tilde{C}_ny}}
=\abs{\gl_k}^{m+n} \abs{\iprod{(-A_k) R_\gl B_{m+\gb_1} x}{\tilde{C}_{n+\gg_1} y}}\\
&\leq \abs{\gl_k}^{\floor{\ga}} \abs{\iprod{R_\gl B_{m+\gb_1}x}{\tilde{C}_{n+\gg_1} y}} + \abs{\gl_k}^{m+n} \norm{B_{m+\gb_1}}\norm{\tilde{C}_{n+\gg_1}}\\
&\leq \abs{\gl_k}^{\floor{\ga}} \abs{\iprod{(-A_k)^{\tilde{\gg}-\gg_1}R_\gl (-A_k)^{\tilde{\gb}-\gb_1} B_\gb x}{\tilde{C}_\gg y}} + \norm{B_{m+\gb_1}}\norm{\tilde{C}_{n+\gg_1}}\\
&= \abs{\gl_k}^{\floor{\ga}} \abs{\iprod{(-A_k)^{\tilde{\ga}} R_\gl B_\gb x}{\tilde{C}_\gg y}} + \norm{B_{m+\gb_1}}\norm{\tilde{C}_{n+\gg_1}}\\
&\leq \abs{\gl_k}^{\floor{\ga}} \norm{(-A_k)^{\tilde{\ga}} R_\gl} \norm{B_\gb }\norm{\tilde{C}_\gg } + \norm{B_{m+\gb_1}}\norm{\tilde{C}_{n+\gg_1}}\\
&\leq M_1 \norm{B_\gb }\norm{\tilde{C}_\gg } + \norm{B_{m+\gb_1}}\norm{\tilde{C}_{n+\gg_1}}.
}

We have
\eq{
\norm{CR(\gl,A)B} = \sup_{\norm{x}=\norm{y}=1} \abs{\iprod{CR(\gl,A)Bx}{y}},
}
and thus~\eqref{eq:Okbddsumest} together with the subsequent estimates shows that for $\gl\in \Omega_k$ the norm $\norm{CR(\gl,A)B}$ can be estimated independently of $\gl$. Moreover, it is clear that the bound can be made arbitrarily small (in particular, to be smaller than $c<1$) if 
$\norm{B}$, $\norm{C}$, $\norm{B_j}$, and $\norm{\tilde{C}_l}$ for all $j\in \List{m}$, $l\in \List{n}$ (plus $\norm{B_{m+\gb_1}},\norm{\tilde{C}_{n+\gg_1}}$ if $\tilde{\gb}+\tilde{\gg}>1$) are small enough. However, by Corollary~\ref{cor:normrels} we can see that each of these norms for $0<\gb_0<\gb$ and $0<\gg_0<\gg$ can be estimated as
\eq{
\norm{B_{\gb_0}} \leq M_{\gb_0/\gb} \norm{B}^{1-\gb_0/\gb} \norm{B_\gb}^{\gb_0/\gb}, 
\quad \mbox{and} \quad
\norm{\tilde{C}_{\gg_0}} \leq M_{\gg_0/\gg} \norm{B}^{1-\gg_0/\gg} \norm{\tilde{C}_\gg}^{\gg_0/\gg}, 
}
where the constants $M_{\gb_0/\gb}$ and $M_{\gg_0/\gg}$ only depend on the exponents, and not on $k\in\Igw$. This finally implies that for $0<c<1$ there exists $\gd>0$ in such a way that
\eq{
\sup_{\gl\in\Omega_k} \norm{CR(\gl,A)B}\leq c
}
whenever $\norm{B}<\gd$, $\norm{C}<\gd$, $\norm{(i\gw_k-A)^{-\gb}B}<\gd$, and $\norm{(-i\gw_k-A^\ast)^{-\gg}C^\ast}<\gd$.

Due to the fact that none of the used estimates depend on $k\in\Igw$, the same bound $\gd>0$ works for all indices $k\in\Igw$. This concludes the proof.
\end{proof}

\begin{lemma}
  \label{lem:RbddOcomp}
Let Assumption~\textup{\ref{ass:Astandass}} be satisfied. There exists $M_2\geq 1$ such that
\eq{
\sup_{\gl\in \overline{\C^+}\setminus (\cup_k \Omega_k  )} \norm{R(\gl,A)}\leq M_2.
}
\end{lemma}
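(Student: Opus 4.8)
The goal is to bound the resolvent $\norm{R(\gl,A)}$ uniformly on the set $\overline{\C^+}\setminus(\cup_k \Omega_k)$. The plan is to split this region into two pieces according to whether $\gl$ is close to or far from the imaginary axis, and to control each piece using the tools already established. Specifically, I would first handle the region where $\re\gl$ is bounded away from zero, and then the region where $\re\gl$ is small but $\gl$ stays outside all the sets $\Omega_k$.

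\textbf{Step 1: the region far from the imaginary axis.} For $\gl\in\overline{\C^+}$ with $\re\gl = \mu > 0$, the Hille--Yosida Theorem gives $\mu\norm{R(\gl,A)}\leq M$, where $M$ is the uniform bound on $\norm{T(t)}$. Hence $\norm{R(\gl,A)}\leq M/\mu$, which is uniformly bounded whenever $\mu \geq \mu_0$ for any fixed $\mu_0>0$. So the only delicate part is the strip $0\leq \mu < \mu_0$ with $\gl$ outside $\cup_k\Omega_k$.

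\textbf{Step 2: the strip near the imaginary axis.} On the imaginary axis itself, Assumption~\ref{ass:Astandass} already gives $\norm{R(i\gw,A)}\leq M_A$ for all $\gw$ with $\abs{\gw-\gw_k}>\eps_A$ for every $k$, i.e.\ precisely at the boundary points of the region that lie on $i\R$ outside the $\Omega_k$. The plan is to extend this bound into the strip $0\leq \mu<\mu_0$ using the resolvent identity $R(\gl,A)=R(i\gw,A)+\mu R(\gl,A)R(i\gw,A)$, exactly as in the proof of Lemma~\ref{lem:Okbdd}. Writing $\gl = \mu+i\gw$, this yields $\norm{R(\gl,A)}\leq \norm{R(i\gw,A)}(1+\mu\norm{R(\gl,A)})\leq M_A(1+M)$ once we combine it with the Hille--Yosida bound $\mu\norm{R(\gl,A)}\leq M$; the key point is that for $\gl\notin\cup_k\Omega_k$ with small $\mu$ the corresponding point $i\gw$ satisfies $\abs{\gw-\gw_k}>\eps_A$ for all $k$, so $\norm{R(i\gw,A)}\leq M_A$ is available. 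I would choose $\mu_0$ (for instance $\mu_0=\eps_A$, or any convenient small value) so that the geometry of $\Omega_k = \setm{\gl}{\re\gl\geq 0,\ 0<\abs{\gl-i\gw_k}\leq\eps_A}$ guarantees that $\gl\notin\cup_k\Omega_k$ together with $0<\mu<\mu_0$ forces $\abs{\gw-\gw_k}>$ (a fixed positive quantity), keeping the base point off the exceptional arcs.

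\textbf{Combining and main obstacle.} Taking $M_2 = \max\set{M/\mu_0,\ M_A(1+M),\ M_A}$ then gives the uniform bound on all of $\overline{\C^+}\setminus(\cup_k\Omega_k)$. The one point requiring care is the geometric bookkeeping in Step 2: I must verify that excluding the disks $\Omega_k$ really does keep the horizontal projection $i\gw$ of any admissible $\gl$ at distance $>\eps_A$ from every $\gw_k$, so that the resolvent bound on the imaginary axis applies to the base point of the resolvent identity. Since $\abs{\gw-\gw_k}\leq\abs{\gl-i\gw_k}$, a point with $\abs{\gl-i\gw_k}>\eps_A$ need not have $\abs{\gw-\gw_k}>\eps_A$; this is the subtlety, and it is resolved by shrinking $\mu_0$ appropriately (so that being outside the arc $\Omega_k$ but within the thin strip forces $\abs{\gw-\gw_k}$ to stay bounded below), after which everything reduces to the same elementary resolvent-identity estimate already used in Lemma~\ref{lem:Okbdd}.
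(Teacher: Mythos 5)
Your overall strategy (resolvent identity with a horizontal displacement plus the Hille--Yosida bound $\re\gl\,\norm{R(\gl,A)}\leq M$) is the same as the paper's, but your bookkeeping has a genuine gap in Step 2 that your proposed fix does not close. You want the base point $i\gw$ (the vertical projection of $\gl=\mu+i\gw$ onto $i\R$) to satisfy $\abs{\gw-\gw_k}>\eps_A$ for all $k$, so that the flat bound $\norm{R(i\gw,A)}\leq M_A$ from Assumption~\ref{ass:Astandass} applies. But if $\gl\notin\Omega_k$ and $0<\mu<\mu_0$, all you get is $\abs{\gw-\gw_k}^2>\eps_A^2-\mu^2$, i.e.\ $\abs{\gw-\gw_k}>\sqrt{\eps_A^2-\mu_0^2}$. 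No choice of $\mu_0>0$, however small, pushes this above $\eps_A$: for $\gl$ just outside the arc of $\Omega_k$ at height close to $\gw_k\pm\eps_A$, the projection $i\gw$ lands \emph{inside} the exceptional interval $0<\abs{\gw-\gw_k}\leq\eps_A$, where the flat bound $M_A$ is simply not part of the hypothesis. So the sentence ``so $\norm{R(i\gw,A)}\leq M_A$ is available'' is false for those base points, and ``shrinking $\mu_0$'' does not resolve it.

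The repair is short but uses a different part of the assumption than you invoke: on the set $\sqrt{\eps_A^2-\mu_0^2}<\abs{\gw-\gw_k}\leq\eps_A$ the polynomial bound~\eqref{eq:Aresolventgrowthorder} gives $\norm{R(i\gw,A)}\leq M_A\abs{\gw-\gw_k}^{-\ga}\leq M_A(\eps_A^2-\mu_0^2)^{-\ga/2}$, which is a uniform constant (just not $M_A$), and then your resolvent-identity step goes through with $M_2$ adjusted accordingly. The paper sidesteps this issue entirely by choosing the base point $\gl_0$ as the horizontal (not vertical) projection of $\gl$ onto the boundary of $\overline{\C^+}\setminus(\cup_k\Omega_k)$: that point is either on $i\R$ with $\abs{\im\gl_0-\gw_k}>\eps_A$ (so $M_A$ genuinely applies) or on the arc $\abs{\gl_0-i\gw_k}=\eps_A$, where Lemma~\ref{lem:Okbdd} gives $\norm{R(\gl_0,A)}\leq M_0/\eps_A^\ga$; the resolvent identity then transports the bound along the purely real increment $\gl-\gl_0$. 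Either route works, but as written your argument needs the extra appeal to~\eqref{eq:Aresolventgrowthorder} to be complete.
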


\begin{proof}
Let $\gl \in \overline{\C^+}\setminus \left( \bigcup_k \Omega_k \right)$ and let $\gl_0$ be such that $0\leq\re\gl_0\leq \re\gl$, $\im\gl_0=\im\gl$ and $\gl_0$ lies on the boundary of $\overline{\C^+}\setminus \left( \bigcup_k \Omega_k \right)$. Then either $\gl_0\in i\R$, which implies $\norm{R(\gl_0,A)}\leq M_A$ by Assumption~\ref{ass:Astandass}, or otherwise $\gl_0\in \Omega_k$ and $\abs{\gl_0-i\gw_k}=\eps_A$ for some $k\in \Igw$. By Lemma~\ref{lem:Okbdd} we have that there exists $M_0$ (independent of $k$) such that in this case we have
\eq{
\abs{\gl_0-i\gw_k}^\ga \norm{R(\gl_0,A)}\leq M_0 
\qquad \Leftrightarrow \qquad 
\norm{R(\gl_0,A)}\leq \frac{M_0}{\eps_A^\ga} .
}
Finally, if $M\geq 1$ is such that $\norm{T(t)}\leq M$, then $\re\gl \norm{R(\gl,A)}\leq M$ by the Hille--Yosida Theorem.
Using the resolvent identity $R(\gl,A) = R(\gl_0,A) + (\gl-\gl_0) R(\gl_0,A)R(\gl,A)$ we get
\eq{
\MoveEqLeft \norm{R(\gl,A)} 
\leq \norm{R(\gl_0,A)}(1 + \abs{\gl-\gl_0} \norm{R(\gl,A)})\\
&\leq \max\set{M_A,M_0/\eps_A^\ga} (1+(\re\gl-\re\gl_0)\norm{R(\gl,A)})\\
&\leq \max\set{M_A,M_0/\eps_A^\ga} (1+\re\gl\norm{R(\gl,A)})
\leq \max\set{M_A,M_0/\eps_A^\ga} (1+M)
=:M_2.
}
\end{proof}

\begin{lemma}
  \label{lem:ABCinj}
  Let Assumption~\textup{\ref{ass:Astandass}} be satisfied with $\gb+ \gg = \ga$.
  There exists $\gd>0$ such that if $\norm{B}<\gd$, $\norm{C}<\gd$, and
  \eq{
    \norm{(i\gw_k-A)^{-\gb} B}<\gd, \qquad  \norm{(-i\gw_k-A^\ast)^{-\gg}C^\ast}<\gd,
  }
  for all $k\in\Z$, then
  $i\gw_k\notin \gs_p(A + BC)$.
\end{lemma}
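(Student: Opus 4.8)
The plan is to argue by contradiction. Suppose $i\gw_k\in\gs_p(A+BC)$ for some $k\in\Igw$, with eigenvector $x\neq 0$, and set $u=Cx\in\Y$. The eigenvalue equation reads $(i\gw_k-A)x=BCx=Bu$. First I would dispose of the trivial obstruction: if $u=Cx=0$, then the equation collapses to $(i\gw_k-A)x=0$, which forces $x=0$ because $i\gw_k\in\gs_c(A)$ is not an eigenvalue of $A$. Hence $u\neq 0$, and it suffices to derive $u=0$ to reach a contradiction. The strategy is to show that $u$ satisfies $u=Mu$ for a $p\times p$ matrix $M$ with $\norm{M}<1$, which is possible only if $u=0$.

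To produce $M$, I would factor the (formally unbounded) solution operator $(i\gw_k-A)^{-1}$ appearing in $x=(i\gw_k-A)^{-1}Bu$ into two pieces, one absorbed by $B$ and one by $C^\ast$. Choose $\gg'=\min\set{\gg,1}$ and $\gb'=1-\gg'$. Using $\gb+\gg=\ga\geq 1$ one checks $0\leq\gb'\leq\gb$, $0\leq\gg'\leq\gg$, $\gg'\leq 1$, and $\gb'+\gg'=1$. By Lemma~\ref{lem:Momentineq} both $(i\gw_k-A)^{-\gb'}B$ and $C(i\gw_k-A)^{-\gg'}$ (the latter since $\bigl((-i\gw_k-A^\ast)^{-\gg'}C^\ast\bigr)^\ast=C(i\gw_k-A)^{-\gg'}$) are bounded. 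The claim is that
\eq{
Cx = \bigl[C(i\gw_k-A)^{-\gg'}\bigr]\,\bigl[(i\gw_k-A)^{-\gb'}B\bigr]\,u =: Mu .
}

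To justify this I would set $z=(i\gw_k-A)^{-\gb'}Bu$, which is well defined and equals $[(i\gw_k-A)^{-\gb'}B]u$ because $Bu\in\ran((i\gw_k-A)^\gb)\subset\ran((i\gw_k-A)^{\gb'})$. The key point is that $z$ also lies in $\ran((i\gw_k-A)^{\gg'})=\Dom((i\gw_k-A)^{-\gg'})$: since $x\in\Dom(i\gw_k-A)\subset\Dom((i\gw_k-A)^{\gg'})$ (as $\gg'\leq 1$) and $Bu=(i\gw_k-A)x$, the fractional power laws for the injective, densely ranged sectorial operator $i\gw_k-A$ give $z=(i\gw_k-A)^{-\gb'}(i\gw_k-A)x=(i\gw_k-A)^{1-\gb'}x=(i\gw_k-A)^{\gg'}x$, whence $(i\gw_k-A)^{-\gg'}z=x$. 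Applying the bounded operator $C(i\gw_k-A)^{-\gg'}$ then yields $Cx=[C(i\gw_k-A)^{-\gg'}]z=Mu$, proving the claim. Estimating $\norm{M}\leq\norm{(-i\gw_k-A^\ast)^{-\gg'}C^\ast}\,\norm{(i\gw_k-A)^{-\gb'}B}$ and invoking Corollary~\ref{cor:normrels} (with the boundary cases $\gb'\in\set{0,\gb}$, $\gg'=\gg$ handled directly by the hypotheses), each factor is at most a fixed constant times $\gd$, so $\norm{M}<1$ once $\gd$ is chosen small enough uniformly in $k\in\Igw$. Then $u=Cx=Mu$ with $\norm{M}<1$ forces $u=0$, the desired contradiction.

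The main obstacle is the rigorous bookkeeping in the factorization: verifying the identity $(i\gw_k-A)^{-\gb'}(i\gw_k-A)=(i\gw_k-A)^{\gg'}$ on $\Dom(i\gw_k-A)$ and that $[C(i\gw_k-A)^{-\gg'}]z=Cx$, both of which hinge on the composition rules for fractional powers of the injective sectorial operator $i\gw_k-A$ (of angle $\pi/2$, as established in the proof of Lemma~\ref{lem:Momentineq}) together with the domain inclusion guaranteed by $\gg'\leq 1$. The choice $\gg'=\min\set{\gg,1}$ is exactly what keeps the splitting within the admissible ranges of powers for all $\ga=\gb+\gg\geq 1$.
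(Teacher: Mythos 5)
Your proof is correct and follows essentially the same route as the paper: both split the exponent $1$ as $\gb'+\gg'$ with $0\le\gb'\le\gb$, $0\le\gg'\le\gg$, realize $C(i\gw_k-A)^{-\gg'}$ as a bounded operator via its adjoint, and use the Moment Inequality to make the product of the two graph norms less than $1$. The only (cosmetic) difference is that you derive the contradiction from the fixed-point equation $u=Mu$ for $u=Cx\in\C^p$, whereas the paper factors $i\gw_k-A-BC$ as $(i\gw_k-A)^{\gb'}(I-(i\gw_k-A)^{-\gb'}BC_{\gg'})(i\gw_k-A)^{\gg'}$ and concludes $\phi=0$ directly from injectivity -- the standard equivalence between invertibility of $I-ST$ and of $I-TS$.
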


\begin{proof}
  Choose $0\leq\gb_1<\gb$ and $0\leq \gg_1<\gg$ in such a way that $\gb_1+\gg_1=1$. Assume $\norm{(i\gw_k-A)^{-\gb_1}B}<1$ and $\norm{(-i\gw_k-A^\ast)^{-\gg_1}C^\ast}<1$. Since $0\leq \gg_1\leq 1$, we have $\ran( i\gw_k-A)\subset \ran( (i\gw_k-A)^{\gg_1}) \subset X$, which implies $\overline{\Dom( (i\gw_k-A)^{-\gg_1})} = X$ due to the fact that $i\gw_k\in \gs_c(A)$. Because of this, the operator $C(i\gw_k-A)^{-\gg_1}$ has a unique bounded extension $C_{\gg_1}\in \Lin(X,\Y)$ with norm $\norm{C_{\gg_1}}=\norm{(-i\gw_k-A^\ast)^{-\gg_1}C^\ast}<1$. 
  
  Let $\phi\in \ker(i\gw_k - A-BC)$.
  Because we have $\norm{(i\gw_k-A)^{-\gb_1}B C_{\gg_1}}<1$, the operator $I - (i\gw_k-A)^{-\gb_1}BC_{\gg_1}$ is boundedly invertible, and since $(i\gw_k-A)^{\gb_1}$ and $(i\gw_k-A)^{\gg_1}$ are injective, we have
  \eq{
  &(i\gw_k - A - BC)\phi = 0\\
  \Rightarrow \quad & 
  (i\gw_k - A)^{\gb_1} (I - (i\gw_k-A)^{-\gb_1}BC(i\gw_k-A)^{-\gg_1})(i\gw_k-A)^{\gg_1}\phi = 0\\
  \Rightarrow \quad & 
  (I - (i\gw_k-A)^{-\gb_1}BC_{\gg_1})(i\gw_k-A)^{\gg_1}\phi = 0\\
  \Rightarrow \quad & 
  (i\gw_k-A)^{\gg_1}\phi = 0
  \quad \Rightarrow \quad 
  \phi = 0.
  }
  Since $\phi \in \ker(i\gw_k -A-BC)$ was arbitrary, this concludes that $i\gw_k \notin \gs_p(A+BC)$.

  Finally, Lemma~\ref{lem:Momentineq} can be used to conclude that there exists $\gd>0$ such that the condition $\norm{(i\gw_k-A)^{-\gb_1}B}<1$ and $\norm{(-i\gw_k-A^\ast)^{-\gg_1}C^\ast}<1$ is satisfied for all $k\in\Igw$ whenever $\norm{B}<\gd$, $\norm{C}<\gd$, $\norm{(i\gw_k-A)^{-\gb} B}<\gd$, and $\norm{(-i\gw_k-A^\ast)^{-\gg}C^\ast}<\gd$ for all $k\in\Igw$.
\end{proof}

\begin{proof}[Proof of Theorem~\textup{\ref{thm:specpert}}]
Let $0<c<1$ and let $M_2\geq 1$ be as in Lemma~\ref{lem:RbddOcomp}.  
Choose $\gd_1>0$ as in Lemma~\ref{lem:ARboundzero}, and $\gd_2>0$ as in Lemma~\ref{lem:ABCinj}. We will show that the claims of the theorem are satisfied with the choice $\gd = \min \set{\gd_1,\gd_2,\sqrt{c/M_2}}$.
To this end, for the rest of the proof, we assume that the operators $B$ and $C$ satisfy $\norm{B}<\gd$, $\norm{C}<\gd$, $\norm{(i\gw_k-A)^{-\gb} B}<\gd$, and $\norm{(-i\gw_k-A^\ast)^{-\gg}C^\ast}<\gd$ for all $k\in\Igw$.

Since $\norm{B},\norm{C}<\gd\leq\sqrt{c/M_2}$, for all $\gl \in \overline{\C^+}\setminus \left( \bigcup_k \Omega_k \right)$ we have
\eq{
\norm{CR(\gl,A)B}\leq \norm{C}\norm{B}\norm{R(\gl,A)} 
<\frac{\sqrt{c}}{\sqrt{M_2}}\cdot\frac{\sqrt{c}}{\sqrt{M_2}}\cdot M_2
=c<1 .
}
Furthermore, since $\gd\leq\gd_1$, we have from Lemma~\ref{lem:ARboundzero} that 
$\norm{CR(\gl,A)B}\leq c<1$ also for $\gl\in \bigcup_k \Omega_k$. Combining these estimates, we can see that 
 $\norm{CR(\gl,A)B}\leq c<1$ and $1\in \rho(CR(\gl,A)B)$ for all $\gl\in \overline{\C^+}\setminus \set{i\gw_k}_{k\in\Igw}$.
The Shermann--Morrison--Woodbury formula in Lemma~\ref{lem:ShermanMorrisonWoodbury} therefore implies that $\overline{\C^+}\setminus \set{i\gw_k}_{k\in\Igw}\subset \rho(A+BC)$. 
Finally, since $\gd\leq \gd_2$, we have from Lemma~\ref{lem:ABCinj} that $i\gw_k\notin \gs_p(A+BC)$ for all $k\in\Igw$.

If $\gl\in \overline{\C^+}\setminus \set{i\gw_k}_k$, then $\norm{CR(\gl,A)B}\leq c<1$ implies
\eq{
\norm{(I-CR(\gl,A)B)\inv} 
= \Norm{\sum_{k=0}^\infty (CR(\gl,A)B)^n}
\leq \sum_{k=0}^\infty \norm{CR(\gl,A)B}^n
\leq \sum_{k=0}^\infty c^n
= \frac{1}{1-c},
}
which concludes the final claim of the lemma. 
\end{proof}

\section{Preservation of Strong Stability}
\label{sec:stabpert}

In this section we complete the proof of Theorem~\ref{thm:stabpert}. In particular, this requires showing that under the stated conditions the perturbed semigroup is uniformly bounded.
For this we use the following condition using the resolvent operators. The proof of the theorem can be found in~\cite[Thm. 2]{gomilkounifbdd}.

\begin{theorem}
  \label{thm:unifbddconds}
  Let $A$ generate a semigroup $T_A(t)$ on a Hilbert space $X$ and let $\gs(A)\subset \overline{\C^-}$. The semigroup $T_A(t)$ is uniformly bounded if and only if 
      for all $x,y\in X$ we have
      \eq{
      \sup_{\xi>0}\, \xi \int_{-\infty}^\infty \norm{R(\xi+i\eta,A)x}^2+ \norm{R(\xi+i\eta,A)^\ast y}^2 d\eta <\infty.
      } 
\end{theorem}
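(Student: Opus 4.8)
The plan is to prove the two implications separately, the engine in both cases being the Laplace-transform representation $R(\gl,A)x=\int_0^\infty \me^{-\gl t}T_A(t)x\,\md t$ for $\re\gl>0$ (legitimate since $\gs(A)\subset\overline{\C^-}$ forces $\Cp\subset\rho(A)$) together with the Plancherel theorem for $X$-valued $L^2$-functions, which is available precisely because $X$ is a Hilbert space.

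For necessity, suppose $\norm{T_A(t)}\le M$ for all $t\ge 0$. On a Hilbert space the adjoint semigroup $T_A(t)^\ast$ is again a \Csg, generated by $A^\ast$, with $\norm{T_A(t)^\ast}\le M$. Fixing $\xi>0$, the map $\eta\mapsto R(\xi+\mi\eta,A)x$ is the Fourier transform of $t\mapsto\me^{-\xi t}T_A(t)x$ (extended by zero to $t<0$), so Plancherel gives $\int_{-\infty}^\infty\norm{R(\xi+\mi\eta,A)x}^2\md\eta=2\pi\int_0^\infty\me^{-2\xi t}\norm{T_A(t)x}^2\md t\le\pi M^2\norm{x}^2/\xi$. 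Multiplying by $\xi$ and repeating the computation for $A^\ast$ and $y$ yields the integral condition, with the supremum bounded by $\pi M^2(\norm{x}^2+\norm{y}^2)$.

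The sufficiency direction is the substantial one. The goal is to bound $\abs{\iprod{T_A(t)x}{y}}$ by a constant multiple of $\norm{x}\norm{y}$, uniformly in $t$, for $x$ in the dense set $\Dom(A^2)$ and arbitrary $y\in X$. Starting from the inverse Laplace transform on a vertical line $\re\gl=\xi$ with $\xi$ larger than the growth bound $\gw_0$, and integrating by parts using $\tfrac{\md}{\md\gl}R(\gl,A)=-R(\gl,A)^2$ (the boundary terms vanishing because $R(\gl,A)x$ decays), I obtain
\[
t\,T_A(t)x=\frac{1}{2\pi\mi}\int_{\xi-\mi\infty}^{\xi+\mi\infty}\me^{\gl t}R(\gl,A)^2x\,\md\gl .
\]
For $x\in\Dom(A^2)$ one has $\norm{R(\gl,A)^2x}=\Omi(\abs{\im\gl}^{-2})$, so the integral converges absolutely and, by Cauchy's theorem, its value is independent of $\xi$ throughout $(0,\infty)$. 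Writing $\iprod{R(\gl,A)^2x}{y}=\iprod{R(\gl,A)x}{R(\gl,A)^\ast y}$ and applying Cauchy--Schwarz first in $X$ and then in $L^2(\md\eta)$ gives
\[
t\,\abs{\iprod{T_A(t)x}{y}}\le\frac{\me^{\xi t}}{2\pi}\Bigl(\int_{-\infty}^\infty\norm{R(\xi+\mi\eta,A)x}^2\md\eta\Bigr)^{1/2}\Bigl(\int_{-\infty}^\infty\norm{R(\xi+\mi\eta,A)^\ast y}^2\md\eta\Bigr)^{1/2}.
\]
Bounding the two integrals by $C_x^2/\xi$ and $C_y^2/\xi$, where $C_x^2$ and $C_y^2$ denote the (finite) suprema appearing in the hypothesis, and then choosing $\xi=1/t$ to minimize $\me^{\xi t}/\xi$, I arrive at $\abs{\iprod{T_A(t)x}{y}}\le\tfrac{\me}{2\pi}C_xC_y$, a bound free of $t$.

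It remains to replace $C_x,C_y$ by constant multiples of $\norm{x},\norm{y}$. The hypothesis only asserts that these quantities are finite for each individual vector, so I would invoke the uniform boundedness principle for the families $\set{x\mapsto\sqrt\xi\,R(\xi+\mi\cdot,A)x}_{\xi>0}$ and $\set{y\mapsto\sqrt\xi\,R(\xi+\mi\cdot,A)^\ast y}_{\xi>0}$ of operators from $X$ into $L^2(\R;X)$, obtaining $C_x\le M_1\norm{x}$ and $C_y\le M_2\norm{y}$. Density of $\Dom(A^2)$ and boundedness of each $T_A(t)$ then upgrade the estimate to $\norm{T_A(t)}\le\tfrac{\me}{2\pi}M_1M_2$ for all $t$. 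I expect the main obstacle to be the contour-shifting step: the optimal choice $\xi=1/t$ may fall below the growth bound $\gw_0$, so the representation must be pushed down to every $\xi>0$ without circularly presupposing the very boundedness we are proving, and this is exactly where the quadratic resolvent decay on $\Dom(A^2)$ is indispensable.
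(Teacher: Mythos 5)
First, a point of comparison: the paper does not prove this theorem at all --- it quotes it from Gomilko \cite[Thm. 2]{gomilkounifbdd} --- so your proposal is a reconstruction of the cited result rather than an alternative to an argument in the text. Your reconstruction follows the standard Gomilko route: Plancherel for necessity, and for sufficiency the inverse Laplace representation of $t\,T_A(t)x$ through $R(\gl,A)^2$, Cauchy--Schwarz in $X$ and in $L^2(d\eta)$, the choice $\xi=1/t$, and the uniform boundedness principle to convert the pointwise-finite suprema into operator bounds. The necessity half, the Cauchy--Schwarz estimate, the optimization in $\xi$, and the closed-graph/UBP step are all correct.

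The one genuine gap is the contour shift, which you flag as ``the main obstacle'' but do not close. To move the line of integration from $\xi_0>\gw_0$ down to an arbitrary $\xi\in(0,\gw_0]$ you need the horizontal segments to vanish and the integral over $\re\gl=\xi$ to converge absolutely, and for both you invoke $\norm{R(\gl,A)^2x}=\Omi(\abs{\im\gl}^{-2})$ for $x\in\Dom(A^2)$. But every identity expressing $R(\gl,A)^2x$ through $x$, $Ax$, $A^2x$ --- e.g.\ iterating $R(\gl,A)x=\gl\inv x+\gl^{-2}Ax+\gl^{-2}R(\gl,A)A^2x$ --- retains a term containing $R(\gl,A)$ itself, so the claimed decay presupposes $\sup_{\re\gl=\xi}\norm{R(\gl,A)}<\infty$ for each $\xi>0$. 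That is not a consequence of $\gs(A)\subset\overline{\C^-}$ alone: the resolvent of a generator may blow up arbitrarily fast as $\re\gl\rightarrow 0^+$, so the bound must be extracted from the hypothesis before the contour can be moved, on pain of circularity. The standard repair: after your UBP step one has $\int_{-\infty}^\infty\norm{R(\xi+i\eta,A)x}^2\,d\eta\leq M_1^2\norm{x}^2/\xi$; since $\mu\mapsto\norm{R(\mu,A)x}^2$ is subharmonic on $\Cp$, the mean value inequality over the disk of radius $\re\gl/2$ centered at $\gl$ (whose area integral is controlled by integrating the line bound over $\re\mu\in[\re\gl/2,3\re\gl/2]$, giving $M_1^2\norm{x}^2\log 3$) yields $\norm{R(\gl,A)}\leq 2M_1\sqrt{\log 3/\pi}\,/\re\gl$ on all of $\Cp$. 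With this Hille--Yosida-type bound in hand, your $\Omi(\abs{\im\gl}^{-2})$ estimate on each vertical line, the vanishing of the horizontal segments, and the $\xi$-independence of the representation all go through, and the remainder of your argument is sound.
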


We begin by proving two auxiliary lemmata used in proving the uniform boundedness of the perturbed semigroup, as well as in showing the polynomial growth of the perturbed resolvent operator near the points $i\gw_k$. 

\begin{lemma}
  \label{lem:BCfinrankint}
  If $\tilde{B}\in \Lin(\C^p,X)$ and $\tilde{C}\in \Lin(X,\C^p)$, then 
  \eq{
  &\sup_{\xi>0} \; \xi\int_{-\infty}^\infty \norm{R(\xi+i\eta,A)\tilde{B}}^2 d\eta <\infty,
  \qquad
  &\sup_{\xi>0} \; \xi\int_{-\infty}^\infty \norm{\tilde{C} R(\xi+i\eta,A)}^2 d\eta<\infty.
  } 
\end{lemma}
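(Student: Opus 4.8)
The plan is to reduce both estimates to the resolvent characterization of uniform boundedness in \refThm{thm:unifbddconds} and then exploit that $\tilde{B}$ and $\tilde{C}$ have finite rank. The starting point is that, under \refAss{ass:Astandass}, the operator $A$ generates a strongly stable and hence uniformly bounded semigroup: strong stability gives $\sup_{t\geq 0}\norm{T(t)x}<\infty$ for every $x\in X$ (the map $t\mapsto\norm{T(t)x}$ is continuous and has limit $0$ at infinity), so uniform boundedness follows from the Banach--Steinhaus theorem, and moreover $\gs(A)\subset\overline{\C^-}$. Consequently \refThm{thm:unifbddconds} applies, and taking first $y=0$ and then $x=0$ in its two-term bound yields, for every fixed $x,y\in X$, the finiteness of $\sup_{\xi>0}\xi\int_{-\infty}^\infty \norm{R(\xi+i\eta,A)x}^2\,d\eta$ and of $\sup_{\xi>0}\xi\int_{-\infty}^\infty \norm{R(\xi+i\eta,A)^\ast y}^2\,d\eta$.

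For the first integral I would fix the standard basis $\{e_j\}_{j=1}^p$ of $\C^p$ and use that the operator norm of a map out of a finite-dimensional space is dominated by its Hilbert--Schmidt norm, giving the pointwise bound $\norm{R(\xi+i\eta,A)\tilde{B}}^2 \le \sum_{j=1}^p \norm{R(\xi+i\eta,A)\tilde{B}e_j}^2$. Multiplying by $\xi$, integrating in $\eta$, interchanging the finite sum with the integral, and taking the supremum over $\xi>0$ then bounds the quantity of interest by $\sum_{j=1}^p \sup_{\xi>0}\xi\int_{-\infty}^\infty \norm{R(\xi+i\eta,A)(\tilde{B}e_j)}^2\,d\eta$, each summand of which is finite by \refThm{thm:unifbddconds} applied with $x=\tilde{B}e_j$.

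For the second integral I would pass to adjoints. Since $\norm{\tilde{C}R(\lambda,A)} = \norm{(\tilde{C}R(\lambda,A))^\ast} = \norm{R(\lambda,A)^\ast \tilde{C}^\ast}$ and $\tilde{C}^\ast\in\Lin(\C^p,X)$ is again finite rank, the same Hilbert--Schmidt estimate gives $\norm{\tilde{C}R(\xi+i\eta,A)}^2 \le \sum_{j=1}^p \norm{R(\xi+i\eta,A)^\ast (\tilde{C}^\ast e_j)}^2$. The supremum of the weighted integral is then controlled by $\sum_{j=1}^p \sup_{\xi>0}\xi\int_{-\infty}^\infty \norm{R(\xi+i\eta,A)^\ast (\tilde{C}^\ast e_j)}^2\,d\eta$, which is finite by \refThm{thm:unifbddconds} applied with $y=\tilde{C}^\ast e_j$.

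I do not expect a serious obstacle here: the entire content is the reduction of the finite-rank operator norms to finitely many vector-valued resolvent integrals, after which \refThm{thm:unifbddconds} supplies the bounds directly. The only points meriting a word of care are verifying that strong stability indeed forces uniform boundedness (so that the theorem is applicable) and observing that the supremum over $\xi$ may be pulled past the finite sum, since a finite sum of suprema always dominates the supremum of the sum.
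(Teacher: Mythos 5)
Your proof is correct and follows essentially the same route as the paper: the paper also reduces the operator norms to the finitely many vector norms $\norm{R(\gl,A)b_j}$ and $\norm{R(\gl,A)^\ast c_j}$ (with $b_j=\tilde{B}e_j$, $c_j=\tilde{C}^\ast e_j$) via exactly the Cauchy--Schwarz/Hilbert--Schmidt estimate you describe, and then invokes Theorem~\ref{thm:unifbddconds} termwise. Your additional remarks on why the theorem is applicable are a harmless (and correct) elaboration of what the paper leaves implicit.
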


\begin{proof}
  Let $\set{b_j}_{j=1}^p\subset X$ and $\set{c_j}_{j=1}^p\subset X$ be such that $\tilde{B}u= \sum_{j=1}^p u_jb_j $ for $u\in \C^p$ and $\tilde{C}=\left( \iprod{\cdot}{c_1}, \ldots, \iprod{\cdot}{c_p} \right)^T$. A straightforward estimate can be used to show that (see~\cite[Lem. 3]{Pau12} for the proof). 
  \eq{
  \norm{R(\gl,A) \tilde{B}}^2 
  \leq  \sum_{j=1}^p\, \norm{R(\gl,A) b_j}^2 ,
  \qquad
  \norm{\tilde{C} R(\gl,A) }^2 
  \leq \sum_{j=1}^p \norm{R(\gl,A)^\ast c_j}^2.
  }
  Together with Theorem~\ref{thm:unifbddconds} these estimate conclude
    \eq{
    &\sup_{\xi>0} \; \xi\int_{-\infty}^\infty \norm{R(\xi+i\eta,A)\tilde{B}}^2 d\eta
    \leq \sum_{j=1}^p \; \sup_{\xi>0} \; \xi\int_{-\infty}^\infty \norm{R(\xi+i\eta,A)b_j}^2 d\eta <\infty\\
    &\sup_{\xi>0} \; \xi\int_{-\infty}^\infty \norm{\tilde{C}R(\xi+i\eta,A)}^2 d\eta
    \leq \sum_{j=1}^p \; \sup_{\xi>0} \; \xi\int_{-\infty}^\infty \norm{R(\xi+i\eta,A)^\ast c_j}^2 d\eta <\infty.
    }
\end{proof}

The following lemma contains the most technically demanding estimates used in the proof of Theorem~\ref{thm:stabpert}.

\begin{lemma} 
  \label{lem:RBCRest}
  Let $\gd>0$ be chosen as in Theorem~\textup{\ref{thm:specpert}} and let $k\in\Igw$.
  There exists a function $f_k: \overline{\C^+}\setminus \set{i\gw_l}_{l\in\Igw}\rightarrow \R^+$ such that if $\norm{B}<\gd$, $\norm{C}<\gd$, $\norm{(i\gw_k-A)^{-\gb}B}<\gd$, and $\norm{(-i\gw_k-A^\ast)^{-\gg}C^\ast}<\gd$ we have 
  \eq{
  \norm{R(\gl,A)B}\norm{CR(\gl,A)}\leq f_k(\gl) \qquad \forall \gl\in\Omega_k,
  }
   and $f_k(\cdot)$ has properties $\sup_{0<\abs{\gw-\gw_k}\leq \eps_A}\abs{\gw-\gw_k}^\ga f_k(i\gw)<\infty$ and
  \eq{
  \sup_{\xi>0} \xi \int_{-\infty}^\infty f_k(\xi+i\eta)^2 d\eta<\infty.
  }
\end{lemma}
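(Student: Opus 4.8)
The plan is to reduce the operator norms to vector norms, peel off the integer parts of the resolvent by telescoping, and then confront a single genuinely bilinear Carleson-type integral, which is where all the work lies. First I would use the finite rank of $B$ and $C$: writing $BC=\sum_j\iprod{\cdot}{c_j}b_j$, the estimate already cited from \cite[Lem.~3]{Pau12} gives $\norm{R(\gl,A)B}^2\leq\sum_j\norm{R(\gl,A)b_j}^2$ and $\norm{CR(\gl,A)}^2\leq\sum_j\norm{R(\gl,A)^\ast c_j}^2$. Hence it suffices to bound $\norm{R_\gl b}\,\norm{R_\gl^\ast c}$ for fixed $b\in\ran((i\gw_k-A)^\gb)$, $c\in\ran((-i\gw_k-A^\ast)^\gg)$ (abbreviating $R_\gl=R(\gl,A)$), and to take $f_k$ to be a constant multiple of the finite sum of the resulting bounds, extended by $0$ outside $\Omega_k$. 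With $A_k=A-i\gw_k$, $\gl_k=\gl-i\gw_k$, $b_\gb=(i\gw_k-A)^{-\gb}b$, $c_\gg=(-i\gw_k-A^\ast)^{-\gg}c$ (bounded by Assumption~\ref{ass:Astandass}), I write $\gb=m+\tilde{\gb}$, $\gg=n+\tilde{\gg}$ with $m,n\in\N_0$, $\tilde{\gb},\tilde{\gg}\in[0,1)$, and iterate $(-A_k)R_\gl=I-\gl_k R_\gl$ exactly as in Lemma~\ref{lem:ARfracbnd} to obtain
\eq{
R_\gl b=\sum_{l=1}^{m}(-\gl_k)^{l-1}(-A_k)^{-l}b+(-\gl_k)^m R_\gl b_m,\qquad b_m=(-A_k)^{-m}b=(-A_k)^{\tilde{\gb}}b_\gb,
}
and symmetrically for $R_\gl^\ast c$ with $n,\tilde{\gg},c_n=(-i\gw_k-A^\ast)^{\tilde{\gg}}c_\gg$.

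Since $\abs{\gl_k}\leq\eps_A\leq1$ on $\Omega_k$, the telescoping sums are bounded, so $\norm{R_\gl b}\leq K_b+\abs{\gl_k}^m\norm{R_\gl b_m}$ and likewise for $c$. For property~1 I combine this with $\norm{R_\gl}\leq M_0\abs{\gl_k}^{-\ga}$ from Lemma~\ref{lem:Okbdd} and the fractional bound of Lemma~\ref{lem:ARfracbnd} (applied with the exponents $\tilde{\gb},\tilde{\gg}$), which yields $\norm{R_\gl b}=\Omi(\abs{\gl_k}^{-\gg})$ and $\norm{R_\gl^\ast c}=\Omi(\abs{\gl_k}^{-\gb})$, hence $f_k(\gl)=\Omi(\abs{\gl_k}^{-\ga})$ and property~1 follows. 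For property~2 I multiply the two bounds, square, and split $\norm{R_\gl b}^2\norm{R_\gl^\ast c}^2$ into four groups. Every group containing a bounded telescoping factor, or only a single resolvent, is controlled at once: a bounded factor times $\abs{\gl_k}^{2m}\norm{R_\gl b_m}^2$ is integrable because $\abs{\gl_k}^{2m}$ is bounded and Lemma~\ref{lem:BCfinrankint} gives $\sup_{\xi>0}\xi\int\norm{R_\gl b_m}^2\,d\eta<\infty$; the purely bounded group contributes $\Omi(\xi)$ on the finite $\eta$-segment $\{\eta:\xi+i\eta\in\Omega_k\}$.

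The one genuinely difficult contribution is the fully bilinear term
\eq{
\xi\int \abs{\gl_k}^{2(m+n)}\,\norm{R_\gl b_m}^2\,\norm{R_\gl^\ast c_n}^2\,d\eta .
}
Here the naive pointwise bound $\norm{R_\gl b_m}\norm{R_\gl^\ast c_n}=\Omi(\abs{\gl_k}^{-\ga})$ is fatal, since its square integrates to a multiple of $\xi^{2-2\ga}$, which is unbounded as $\xi\downarrow0$; this is exactly why one cannot mimic the single-resolvent argument of Lemma~\ref{lem:ARboundzero}, and why both resolvent factors must be retained with their joint decay in $\eta$. I would apply the Moment Inequality of Lemma~\ref{lem:Momentineq} to $\norm{R_\gl b_m}=\norm{(-A_k)^{\tilde{\gb}}R_\gl b_\gb}$ together with $(-A_k)R_\gl b_\gb=b_\gb-\gl_k R_\gl b_\gb$, bounding $\norm{R_\gl b_m}$ by a sum of a term $\norm{b_\gb}^{\tilde{\gb}}\norm{R_\gl b_\gb}^{1-\tilde{\gb}}$ and a term $\abs{\gl_k}^{\tilde{\gb}}\norm{R_\gl b_\gb}$, and symmetrically for $c$. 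Writing $p=\norm{R_\gl b_\gb}$, $q=\norm{R_\gl^\ast c_\gg}$, the integrand expands into products of powers of $p$ and $q$ carrying explicit powers of $\abs{\gl_k}$.

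In each such product I bound the excess of one factor by $p\leq M_0\norm{b_\gb}\abs{\gl_k}^{-\ga}$ (or the analogue for $q$) so that every resolvent survives at most quadratically; the quadratic survivors are integrated by Lemma~\ref{lem:BCfinrankint}, and the residual powers of $\abs{\gl_k}$ are absorbed because the hypothesis $\ga=\gb+\gg\geq1$ is precisely what makes those residual exponents nonnegative. The sole product in which $p$ and $q$ both survive at subquadratic order I estimate by Hölder's inequality on the finite-measure segment $\Omega_k\cap\{\re\gl=\xi\}$, choosing the exponents so that $p$ and $q$ each enter to the power $2$ (controlled by Lemma~\ref{lem:BCfinrankint}), the leftover exponent—again nonnegative thanks to $\ga\geq1$—falling on the constant $1$, whose integral over the segment is $\Omi(\xi)$. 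Uniformity in $k\in\Igw$ is inherited from the $k$-independence of the constants in Lemmas~\ref{lem:Okbdd}--\ref{lem:BCfinrankint}. I expect the main obstacle to be exactly this bilinear term: it carries two resolvent factors at the same point $\gl$, so it resists any single resolvent growth estimate and forces the simultaneous square-function control of both $R_\gl b_\gb$ and $R_\gl^\ast c_\gg$, balanced delicately against the $\abs{\gl_k}$-weights via the Moment Inequality and $\ga\geq1$.
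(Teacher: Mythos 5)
Your overall strategy---telescoping off the integer parts of $\gb$ and $\gg$ via $(-A_k)R_\gl=I-\gl_k R_\gl$, isolating the single bilinear term $\abs{\gl_k}^{2(m+n)}\norm{R_\gl b_m}^2\norm{R_\gl^\ast c_n}^2$, splitting each fractional factor with the Moment Inequality, and closing with H\"older against the square-function bounds of Lemma~\ref{lem:BCfinrankint}---is the same as the paper's, and your route to the first property (the pointwise bounds $\norm{R_\gl b}=\Omi(\abs{\gl_k}^{-\gg})$ and $\norm{R_\gl^\ast c}=\Omi(\abs{\gl_k}^{-\gb})$) is if anything cleaner than the paper's case-by-case verification. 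The cross terms and the terms in which one resolvent factor is eliminated entirely by $p\le M_0\norm{b_\gb}\abs{\gl_k}^{-\ga}$ also check out: the residual exponents there are $2\tilde{\gb}(\ga-1)$ and $2\tilde{\gg}(\ga-1)$, which are indeed nonnegative precisely because $\ga\ge1$.

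The gap is in the one term you single out, $\abs{\gl_k}^{2(m+n)}p^{2(1-\tilde{\gb})}q^{2(1-\tilde{\gg})}$. A H\"older application in which $p$ and $q$ ``each enter to the power $2$'' forces the exponents $1/(1-\tilde{\gb})$ and $1/(1-\tilde{\gg})$, whose reciprocals sum to $2-\tilde{\gb}-\tilde{\gg}$; the weight left over for the constant factor is therefore $\tilde{\gb}+\tilde{\gg}-1$, and its nonnegativity is governed by $\tilde{\gb}+\tilde{\gg}\ge1$, not by $\ga\ge1$. This fails, for instance, for $\gb=3/2$, $\gg=3/10$, $\ga=9/5$, where $\tilde{\gb}+\tilde{\gg}=4/5<1$ and the proposed three-factor H\"older has a negative exponent. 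The repair lies inside your own framework: when $\tilde{\gb}+\tilde{\gg}<1$, first downgrade $p^{2(1-\tilde{\gb})}$ to $p^{2(1-\tilde{\gb})-s}$ with $s=2(1-\tilde{\gb}-\tilde{\gg})$ at the cost of a factor $\abs{\gl_k}^{-\ga s}$, which is absorbed because $2(m+n)-\ga s=2(\tilde{\gb}+\tilde{\gg})(\ga-1)\ge0$; the surviving exponents then sum to exactly $2$ and a two-factor H\"older closes the estimate. (This is precisely what the paper's Case~2.3 accomplishes by running the Moment Inequality relative to $(-A_k)^{\tilde{\ga}}$ with weights $\tilde{\gb}/\tilde{\ga}+\tilde{\gg}/\tilde{\ga}=1$ and invoking Lemma~\ref{lem:ARfracbnd}.) With that one parameter range repaired the argument is complete.
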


\begin{proof} 
Choose $m,n\in \N_0$ and $\tilde{\ga},\tilde{\gb},\tilde{\gg}\in [0,1)$ such that $\ga= \floor{\ga}+\tilde{\ga}$, $\gb = m+ \tilde{\gb}$ and $\gg = n+\tilde{\gg}$. Since $\gb + \gg=\ga$, we have either $m+n=\floor{\ga}$ and $\tilde{\gb}+\tilde{\gg}=\tilde{\ga}$, or alternatively, $m+n+1=\floor{\ga}$ and $\tilde{\gb}+\tilde{\gg}-1=\tilde{\ga}$.

  Let $k\in\Igw$ and $\gl\in\Omega_k$. 
  For brevity, denote $R_\gl = R(\gl,A)$, $A_k = A-i\gw_k$, and $\gl_k=\gl-i\gw_k$. 
  Moreover, for $0<r\leq \gb$ denote $B_r = (i\gw_k-A)^{-r}B$, and for $0<r\leq \gg$ denote $\tilde{C}_r= (-i\gw_k-A^\ast)^{-r}C^\ast$.

  Repeatedly applying $(-A_k)R_\gl= (i\gw_k-\gl +\gl - A )R_\gl = (i\gw_k-\gl) R_\gl + I = -\gl_k R_\gl + I$ and using $\abs{\gl_k}=\abs{\gl-i\gw_k}\leq \eps_A\leq 1$ and Corollary~\ref{cor:normrels} we can see that
  \eq{
  \MoveEqLeft\norm{R_\gl B}
  = \norm{(-A_k)R_\gl B_1}
  \leq \abs{\gl_k}\norm{R_\gl B_1} + \norm{B_1}
  = \abs{\gl_k}\norm{(-A_k)R_\gl B_2} + \norm{B_1}\\
  &\leq \abs{\gl_k}^2\norm{R_\gl B_2} + \abs{\gl_k} \norm{B_2} + \norm{B_1}
  \leq \cdots 
  \leq \abs{\gl_k}^m \norm{R_\gl B_m} + \sum_{l=1}^m \abs{\gl_k}^{l-1} \norm{B_l}\\
  &\leq \abs{\gl_k}^m \norm{R_\gl B_m} + \sum_{l=1}^m M_{l/\gb}\norm{B}^{1-l/\gb}\norm{B_\gb}^{l/\gb}
  \leq \abs{\gl_k}^m \norm{R_\gl B_m} + \gd\sum_{l=1}^m M_{l/\gb},
  }
  since $\norm{B_l}\leq M_{l/\gb}\norm{B}^{1-l/\gb} \norm{B_\gb}^{l/\gb}$, and $\norm{B},\norm{B_\gb}<\gd$.

  Similarly using $(-A_k^\ast)R_\gl^\ast = - \conj{\gl_k}R_\gl^\ast + I$ we can estimate
  \eq{
  \MoveEqLeft\norm{CR_\gl}
  = \norm{R_\gl^\ast C^\ast}
  = \norm{(-A_k^\ast)R_\gl^\ast \tilde{C}_1}
  \leq \abs{\gl_k}\norm{R_\gl^\ast \tilde{C}_1} + \norm{\tilde{C}_1}\\
&\leq \abs{\gl_k}^2\norm{R_\gl^\ast \tilde{C}_2} + \abs{\gl_k} \norm{\tilde{C}_2} + \norm{\tilde{C}_1}
  \leq \cdots 
  \leq \abs{\gl_k}^n \norm{R_\gl^\ast \tilde{C}_n} + \sum_{l=1}^n \norm{\tilde{C}_l}\\
  &\leq \abs{\gl_k}^n \norm{R_\gl^\ast \tilde{C}_n} + \gd\sum_{l=1}^n M_{l/\gg}.
  } 
  We thus have 
  \eq{
  \MoveEqLeft\norm{R_\gl B} \norm{CR_\gl}
  \leq \left( \abs{\gl_k}^m \norm{R_\gl B_m} + \gd\sum_{l=1}^m M_{l/\gb} \right)
\norm{CR_\gl}\\
&\leq  \abs{\gl_k}^{m} \norm{R_\gl B_m} \norm{CR_\gl} + \gd\norm{CR_\gl}\sum_{l=1}^m M_{l/\gb}  \\
&\leq  \abs{\gl_k}^{m} \norm{R_\gl B_m} \left( \abs{\gl_k}^n \norm{R_\gl^\ast \tilde{C}_n} + \gd\sum_{l=1}^n M_{l/\gg} \right) + \gd\norm{CR_\gl}\sum_{l=1}^m M_{l/\gb}\\
&\leq \abs{\gl_k}^{m+n} \norm{R_\gl B_m}\norm{R_\gl^\ast \tilde{C}_n} +  \gd \norm{R_\gl B_m} \sum_{l=1}^m  M_{l/\gg}
  + \gd \norm{CR_\gl}\sum_{l=1}^m M_{l/\gb}  
}
If we denote $f_k^0(\gl)=  \gd \norm{R_\gl B_m} \sum_{l=1}^m  M_{l/\gg} + \gd\norm{CR_\gl}\sum_{l=1}^m M_{l/\gb}$, then $\sup_{\xi>0}\; \xi \int_{-\infty}^\infty f_k^0(\xi+i\eta)^2 d\eta<\infty$ by the scalar inequality $(a+b)^2\leq 2(a^2+b^2)$ and Lemma~\ref{lem:BCfinrankint}. Moreover, for any $\gw\in\R$ with $0<\abs{\gw-\gw_k}\leq \eps_A$
\eq{
\MoveEqLeft \abs{\gw-\gw_k}^\ga f_k^0(i\gw)
\leq \abs{\gw-\gw_k}^\ga \norm{R(i\gw,A)}  \left( \gd \norm{ B_m} \sum_{l=1}^m  M_{l/\gg} + \gd\norm{C}\sum_{l=1}^m M_{l/\gb} \right)\\
&\leq M_A  \gd \left( \norm{ B_m} \sum_{l=1}^m  M_{l/\gg} + \norm{C}\sum_{l=1}^m M_{l/\gb} \right)
}
by Assumption~\ref{ass:Astandass}.
Thus if we can find $f_k^1(\cdot)$ in such a way that $ \abs{\gl_k}^{m+n} \norm{R_\gl B_m}\norm{R_\gl^\ast \tilde{C}_n}\leq f_k^1(\gl)$ for all $\gl\in\Omega_k$ and 
\eqn{
\label{eq:f1intcond}
\sup_{0<\abs{\gw-\gw_k}\leq \eps_A}\abs{\gw-\gw_k}^\ga f_k^1(i\gw)<\infty, \quad \mbox{and}, \quad
\sup_{\xi>0}\; \xi \int_{-\infty}^\infty f_k^1(\xi+i\eta)^2 d\eta<\infty, 
}
then the claim of the lemma is clearly satisfied with $f_k(\gl) = f_k^1(\gl)+f_k^0(\gl)$.

  We need to consider several different situations corresponding to different values of the exponents $\ga$, $\gb$, and $\gg$. 
  We have from Lemmas~\ref{lem:Okbdd} and~\ref{lem:ARfracbnd} that there exist $M_0,M_1\geq 1$ such that  $\sup_{\gl\in\bigcup_k\Omega_k}\abs{\gl_k}^\ga \norm{R(\gl,A)}\leq M_0$ and $\sup_{\gl\in\bigcup_k\Omega_k}\abs{\gl_k}^{\floor{\ga}} \norm{(-A_k)^{\tilde{\ga}} R(\gl,A)}\leq M_1$.

  \textit{Case}~1: If $\tilde{\ga}=0$, then the different possibilities are:
  \begin{itemize}
    \item[\textup{1.1.}] If $\tilde{\gb}=\tilde{\gg}=0$, then
      \eq{
      \abs{\gl_k}^{m+n} \norm{R_\gl B_m}\norm{R_\gl^\ast \tilde{C}_n} 
      \leq \abs{\gl_k}^\ga \norm{R_\gl} \norm{B_\gb}\norm{R_\gl^\ast \tilde{C}_\gg} 
     \leq M_0 \gd\norm{R_\gl^\ast \tilde{C}_\gg} 
     =:f_k^1(\gl)
      }
      and $f_k^1(\cdot)$ satisfies~\eqref{eq:f1intcond} due to Lemma~\ref{lem:BCfinrankint} and Assumption~\ref{ass:Astandass} (since we in particular have $f_k^1(i\gw)\leq M_0\gd \norm{\tilde{C}_\gg}\norm{R(i\gw,A)}$).
    \item[\textup{1.2.}] If $\tilde{\gb}+\tilde{\gg}=1$, and $\tilde{\gb}=1$, $\tilde{\gg}=0$, then
      \eq{
      \MoveEqLeft \abs{\gl_k}^{m+n} \norm{R_\gl B_m}\norm{R_\gl^\ast \tilde{C}_n} 
      =\abs{\gl_k}^{\ga-1} \norm{(-A_k)R_\gl B_\gb}\norm{R_\gl^\ast \tilde{C}_\gg} \\
      &\leq\abs{\gl_k}^{\ga} \norm{R_\gl} \norm{B_\gb}\norm{R_\gl^\ast \tilde{C}_\gg}  +  \abs{\gl_k}^{\ga-1} \norm{ B_\gb}\norm{R_\gl^\ast \tilde{C}_\gg}\\
&\leq (M_0+1)\gd\norm{R_\gl^\ast \tilde{C}_\gg}
     =:f_k^1(\gl) 
      }
      and $f_k^1(\cdot)$ satisfies~\eqref{eq:f1intcond} due to Lemma~\ref{lem:BCfinrankint} and Assumption~\ref{ass:Astandass}.
    \item[\textup{1.3.}] If $\tilde{\gb}+\tilde{\gg}=1$, and $\tilde{\gb}=0$, $\tilde{\gg}=1$, then the situation can be handled analogously to the case 1.2. We get 
      \eq{
      \MoveEqLeft \abs{\gl_k}^{m+n} \norm{R_\gl B_m}\norm{R_\gl^\ast \tilde{C}_n} 
\leq (M_0+1)\gd\norm{R_\gl B_\gb}
     =:f_k^1(\gl) 
      } 
      and $f_k^1(\cdot)$ satisfies~\eqref{eq:f1intcond} due to Lemma~\ref{lem:BCfinrankint} and Assumption~\ref{ass:Astandass}.
    \item[\textup{1.4.}] If $\tilde{\gb}+\tilde{\gg}=1$, and $0<\tilde{\gb},\tilde{\gg}<1$, then the moment inequality in Lemma~\ref{lem:Momentineq} implies
      \eq{
      \MoveEqLeft \abs{\gl_k}^{m+n} \norm{R_\gl B_m}\norm{R_\gl^\ast \tilde{C}_n} 
      = \abs{\gl_k}^{m+n} \norm{(-A_k)^{\tilde{\gb}} R_\gl B_\gb}\norm{(-A_k^\ast)^{\tilde{\gg}} R_\gl^\ast \tilde{C}_\gg} \\
      &\leq \abs{\gl_k}^{m+n} M_{\tilde{\gb}} \norm{ R_\gl B_\gb}^{1-\tilde{\gb}}\norm{(-A_k) R_\gl B_\gb}^{\tilde{\gb}} M_{\tilde{\gg}} \norm{ R_\gl^\ast \tilde{C}_\gg}^{1-\tilde{\gg}}\norm{(-A_k^\ast) R_\gl^\ast \tilde{C}_\gg}^{\tilde{\gg}} \\
      &\leq \abs{\gl_k}^{m+n} \norm{(-A_k) R_\gl}^{\tilde{\gb}+\tilde{\gg}} \norm{ B_\gb}^{\tilde{\gb}}\norm{ \tilde{C}_\gg}^{\tilde{\gg}} M_{\tilde{\gb}} M_{\tilde{\gg}} \norm{ R_\gl B_\gb}^{1-\tilde{\gb}}  \norm{ R_\gl^\ast \tilde{C}_\gg}^{1-\tilde{\gg}}\\
      &\leq(\abs{\gl_k}^{m+n+1} \norm{R_\gl}+1) \gd^{\tilde{\gb}+\tilde{\gg}} M_{\tilde{\gb}} M_{\tilde{\gg}} \norm{ R_\gl B_\gb}^{1-\tilde{\gb}}  \norm{ R_\gl^\ast \tilde{C}_\gg}^{1-\tilde{\gg}}\\
      &\leq(M_0+1) \gd M_{\tilde{\gb}} M_{\tilde{\gg}}\norm{ R_\gl B_\gb}^{1-\tilde{\gb}}  \norm{ R_\gl^\ast \tilde{C}_\gg}^{1-\tilde{\gg}}
     =:f_k^1(\gl) ,
      } 
      since $\tilde{\gb}+\tilde{\gg}=1$ and $\abs{\gl_k}\leq 1$.
      If $\gl=i\gw$ with $0<\abs{\gw-\gw_k}\leq \eps_A$, then 
      \eq{
\MoveEqLeft \abs{\gw-\gw_k}^\ga\norm{ R_\gl B_\gb}^{1-\tilde{\gb}}  \norm{ R_\gl^\ast \tilde{C}_\gg}^{1-\tilde{\gg}}
\leq \norm{B_\gb}^{1-\tilde{\gb}} \norm{ \tilde{C}_\gg}^{1-\tilde{\gg}} \abs{\gw-\gw_k}^\ga \norm{ R(i\gw,A)}^{1-\tilde{\gb}+1-\tilde{\gg}}  \\
&\leq \norm{B_\gb}^{1-\tilde{\gb}} \norm{ \tilde{C}_\gg}^{1-\tilde{\gg}} M_A
      }
      and thus $f_k^1(\cdot)$ satisfies the first part of~\eqref{eq:f1intcond}.
      Moreover, if we denote $q=1/(1-\tilde{\gb})$, $r = 1/(1-\tilde{\gg})$, then $1/q+1/r=1$ and the Hölder inequality implies
      \eq{
      \MoveEqLeft\sup_{\xi>0}\; \xi\int_{-\infty}^\infty \norm{ R(\xi+i\eta,A) B_\gb}^{2(1-\tilde{\gb})} \norm{ R(\xi+i\eta,A)^\ast \tilde{C}_\gg}^{2(1-\tilde{\gg})} d\eta\\
      &\leq \left(\sup_{\xi>0}\; \xi\int_{-\infty}^\infty \norm{ R(\xi+i\eta,A) B_\gb}^2 d\eta \right)^\frac{1}{q}
      \left(\sup_{\xi>0}\; \xi\int_{-\infty}^\infty \norm{ R(\xi+i\eta,A)^\ast \tilde{C}_\gg}^2 d\eta \right)^\frac{1}{r}\\
      &<\infty
      }
      by Lemma~\ref{lem:BCfinrankint}. This concludes that $f_k^1(\cdot)$ satisfies~\eqref{eq:f1intcond}.
  \end{itemize}

  \textit{Case}~2: If $\tilde{\ga}>0$ and $\tilde{\gb}+\tilde{\gg}=\tilde{\ga}<1$, then the different possibilities are: 
  \begin{itemize}
    \item[\textup{2.1.}] 
 If $\tilde{\gb}=\tilde{\ga}$ and $\tilde{\gg}=0$, then $\floor{\ga}=m+n$, and
      \eq{
      \MoveEqLeft \abs{\gl_k}^{m+n} \norm{R_\gl B_m}\norm{R_\gl^\ast \tilde{C}_n} 
      \leq \abs{\gl_k}^{\floor{\ga}} \norm{(-A_k)^{\tilde{\ga}} R_\gl}\norm{ B_\gb}\norm{ R_\gl^\ast \tilde{C}_\gg} \\
      &\leq M_1\gd\norm{ R_\gl^\ast \tilde{C}_\gg} 
     =:f_k^1(\gl) 
      }
      and $f_k^1(\cdot)$ satisfies~\eqref{eq:f1intcond} due to Lemma~\ref{lem:BCfinrankint} and Assumption~\ref{ass:Astandass}.
    \item[\textup{2.2.}] 
    If  $\tilde{\gb}=0$ and $\tilde{\gg}=\tilde{\ga}$, can be handled analogously to the case 2.1.  
\item[\textup{2.3.}] 
 If $0<\tilde{\gb},\tilde{\gg}<\tilde{\ga}$,
  then $\tilde{\gb}/\tilde{\ga}+\tilde{\gg}/\tilde{\ga}=1$. 
  The Moment Inequality in Lemma~\ref{lem:Momentineq} implies that there exists constans $M_{\tilde{\gb}/\tilde{\ga}},M_{\tilde{\gg}/\tilde{\ga}}$ not depending on $k\in\Igw$ such that
    \eq{
    \MoveEqLeft\abs{\gl_k}^{m+n} \norm{R_\gl B_m} \norm{R_\gl^\ast \tilde{C}_n}
    =\abs{\gl_k}^{\floor{\ga}} \norm{(-A_k)^{\tilde{\gb}} R_\gl B_\gb} \norm{(-A_k^\ast)^{\tilde{\gg}} R_\gl^\ast \tilde{C}_\gg}\\
    &\leq\abs{\gl_k}^{\floor{\ga}} M_{\tilde{\gb}/\tilde{\ga}}  \norm{R_\gl B_\gb}^{1-\tilde{\gb}/\tilde{\ga}}\norm{(-A_k)^{\tilde{\ga}} R_\gl B_\gb}^{\tilde{\gb}/\tilde{\ga}}  \\
    &\phantom{\leq}\; \times M_{\tilde{\gg}/\tilde{\ga}}  \norm{R_\gl^\ast \tilde{C}_\gg}^{1-\tilde{\gg}/\tilde{\ga}} \norm{(-A_k^\ast)^{\tilde{\ga}} R_\gl^\ast \tilde{C}_\gg}^{\tilde{\gg}/\tilde{\ga}}\\
    &\leq\abs{\gl_k}^{\floor{\ga}} \norm{(-A_k)^{\tilde{\ga}} R_\gl} M_{\tilde{\gb}/\tilde{\ga}} M_{\tilde{\gg}/\tilde{\ga}} \norm{B_\gb}^{\tilde{\gb}/\tilde{\ga}} \norm{\tilde{C}_\gg}^{\tilde{\gg}/\tilde{\ga}}    \norm{R_\gl B_\gb}^{1-\tilde{\gb}/\tilde{\ga}}  \norm{R_\gl^\ast \tilde{C}_\gg}^{1-\tilde{\gg}/\tilde{\ga}} \\
    &\leq M_1 M_{\tilde{\gb}/\tilde{\ga}} M_{\tilde{\gg}/\tilde{\ga}} \gd  \norm{R_\gl B_\gb}^{1-\tilde{\gb}/\tilde{\ga}}  \norm{R_\gl^\ast \tilde{C}_\gg}^{1-\tilde{\gg}/\tilde{\ga}} 
    =:f_k^1(\gl).
    }
    If we denote $q=1/(1-\tilde{\gb}/\tilde{\ga})$ and $r=1/(1-\tilde{\gg}/\tilde{\ga})$, then $1/q+1/r=1$ and we can show that $f_k^1(\cdot)$ satisfies~\eqref{eq:f1intcond} exactly as in the case 1.4.
\end{itemize}

\textit{Case}~3:
If $\tilde{\ga}>0$ and $\tilde{\gb}+\tilde{\gg}=\tilde{\ga}+1>1$, then necessarily $0<\tilde{\gb},\tilde{\gg}<1$, and we can choose $\gb_1=\tilde{\gb}/(\tilde{\ga}+1)$ and $\gg_1=\tilde{\gg}/(\tilde{\ga}+1)$. Then $0<\gb_1< \tilde{\gb}$ and $0<\gg_1< \tilde{\gg}$, and $\gb_1+\gg_1=1$. Using the Moment Inequality in Lemma~\ref{lem:Momentineq} we get
    \eq{
    \MoveEqLeft\abs{\gl_k}^{m+n} \norm{R_\gl B_m} \norm{R_\gl^\ast \tilde{C}_n}
    =\abs{\gl_k}^{m+n} \norm{(-A_k)^{\gb_1} R_\gl B_{m+\gb_1}} \norm{(-A_k^\ast)^{\gg_1} R_\gl^\ast \tilde{C}_{n+\gg_1}}\\
    &\leq\abs{\gl_k}^{m+n} M_{\gb_1}  \norm{R_\gl B_{m+\gb_1}}^{1-\gb_1} \norm{(-A_k) R_\gl B_{m+\gb_1}}^{\gb_1}  \\
    &\phantom{\leq}\; \times M_{\gg_1}  \norm{R_\gl^\ast \tilde{C}_{n+\gg_1}}^{1-\gg_1} \norm{(-A_k^\ast) R_\gl^\ast \tilde{C}_{n+\gg_1}}^{\gg_1}\\
    &= M_{\gb_1}  \abs{\gl_k}^{\gb_1(m+n)}\norm{R_\gl B_{m+\gb_1}}^{1-\gb_1} \norm{(-A_k) R_\gl B_{m+\gb_1}}^{\gb_1}  \\
    &\phantom{\leq}\; \times M_{\gg_1}  \abs{\gl_k}^{\gg_1(m+n)}\norm{R_\gl^\ast \tilde{C}_{n+\gg_1}}^{1-\gg_1} \norm{(-A_k^\ast) R_\gl^\ast \tilde{C}_{n+\gg_1}}^{\gg_1}.
    }
    Now
    \eq{
    \MoveEqLeft \abs{\gl_k}^{\gb_1(m+n)} \norm{R_\gl B_{m+\gb_1}}^{1-\gb_1} \norm{(-A_k)R_\gl B_{m+\gb_1}}^{\gb_1}\\
    &\leq \abs{\gl_k}^{\gb_1(m+n)}\norm{R_\gl B_{m+\gb_1}}^{1-\gb_1}\left(   \abs{\gl_k} \norm{R_\gl B_{m+\gb_1}} + \norm{B_{m+\gb_1}} \right)^{\gb_1} \\
    &\leq 2^{\gb_1} \left(   \abs{\gl_k}^{\gb_1 \floor{\ga}} \norm{R_\gl B_{m+\gb_1}} + \norm{R_\gl B_{m+\gb_1}}^{1-\gb_1}\norm{B_{m+\gb_1}}^{\gb_1} \right) 
    }
    since $\floor{\ga}=m+n+1$ and $\abs{\gl_k}\leq 1$.
     Using the Moment Inequality and the fact that $(\tilde{\gb}-\gb_1)/\tilde{\ga} = \tilde{\gb}(1-1/(\tilde{\ga}+1))/\tilde{\ga} = \gb_1$ we get
    \eq{
    \MoveEqLeft \abs{\gl_k}^{\gb_1 \floor{\ga}} \norm{R_\gl B_{m+\gb_1}} 
    =\abs{\gl_k}^{\gb_1 \floor{\ga}} \norm{(-A_k)^{\tilde{\gb}-\gb_1}R_\gl B_\gb} \\
    &\leq  \abs{\gl_k}^{\gb_1 \floor{\ga} } M_{\gb_1}\norm{R_\gl B_\gb}^{1-\gb_1} \norm{(-A_k)^{\tilde{\ga}}R_\gl B_\gb}^{\gb_1} \\
    &\leq   M_{\gb_1} \norm{B_\gb}^{\gb_1} \norm{R_\gl B_\gb}^{1-\gb_1} \left( \abs{\gl_k}^{ \floor{\ga} }\norm{(-A_k)^{\tilde{\ga}}R_\gl } \right)^{\gb_1} 
    \leq   M_{\gb_1} M_1^{\gb_1} \gd^{\gb_1} \norm{R_\gl B_\gb}^{1-\gb_1} .
    } 
    Analogously for the terms with $\tilde{C}_{n+\gg_1}$ we have 
    \eq{
    \MoveEqLeft \abs{\gl_k}^{\gg_1(m+n)} \norm{R_\gl^\ast \tilde{C}_{n+\gg_1}}^{1-\gg_1} \norm{(-A_k^\ast)R_\gl^\ast \tilde{C}_{n+\gg_1}}^{\gg_1}\\
    &\leq 2^{\gg_1} \left(   \abs{\gl_k}^{\gg_1 \floor{\ga}} \norm{R_\gl^\ast \tilde{C}_{n+\gg_1}} + \abs{\gl_k}^{\gg_1(m+n)}\norm{R_\gl^\ast \tilde{C}_{n+\gg_1}}^{1-\gg_1}\norm{\tilde{C}_{n+\gg_1}}^{\gg_1} \right) \\
    &\leq 2^{\gg_1} \left( M_{\gg_1} M_1^{\gg_1} \gd^{\gg_1} \norm{R_\gl^\ast \tilde{C}_\gg}^{1-\gg_1} + \norm{R_\gl^\ast \tilde{C}_{n+\gg_1}}^{1-\gg_1}\norm{\tilde{C}_{n+\gg_1}}^{\gg_1} \right) .
    }

Combining these estimates we can see that if we choose
\eq{
\tilde{M} = 2M_{\gb_1}M_{\gg_1}  \max \set{M_{\gb_1} M_1^{\gb_1} \gd^{\gb_1},\norm{B_{m+\gb_1}}^{\gb_1}} \cdot \max\set{M_{\gg_1} M_1^{\gg_1} \gd^{\gg_1},\norm{\tilde{C}_{n+\gg_1}}^{\gg_1}} ,
}
 then (using $\gb_1+\gg_1=1$)
\eq{
    \MoveEqLeft\abs{\gl_k}^{m+n} \norm{R_\gl B_m} \norm{R_\gl^\ast \tilde{C}_n}\\
    &\leq 2^{\gb_1+\gg_1}M_{\gb_1}M_{\gg_1} \left(M_{\gb_1} M_1^{\gb_1} \gd^{\gb_1} \norm{R_\gl B_\gb}^{1-\gb_1}    + \norm{R_\gl B_{m+\gb_1}}^{1-\gb_1}\norm{B_{m+\gb_1}}^{\gb_1} \right) \\
&\phantom{\leq} ~\times\left( M_{\gg_1} M_1^{\gg_1} \gd^{\gg_1} \norm{R_\gl^\ast \tilde{C}_\gg}^{1-\gg_1} + \norm{R_\gl^\ast \tilde{C}_{n+\gg_1}}^{1-\gg_1}\norm{\tilde{C}_{n+\gg_1}}^{\gg_1} \right) \\
&\leq \tilde{M} \left(\norm{R_\gl B_\gb}^{1-\gb_1}    + \norm{R_\gl B_{m+\gb_1}}^{1-\gb_1} \right) 
\left( \norm{R_\gl^\ast \tilde{C}_\gg}^{1-\gg_1} + \norm{R_\gl^\ast \tilde{C}_{n+\gg_1}}^{1-\gg_1} \right) \\
&=  \tilde{M} \bigl(
\norm{R_\gl B_\gb}^{1-\gb_1} \norm{R_\gl^\ast \tilde{C}_\gg}^{1-\gg_1}   
+ \norm{R_\gl B_\gb}^{1-\gb_1} \norm{R_\gl^\ast \tilde{C}_{n+\gg_1}}^{1-\gg_1} \\
&\phantom{\leq} ~+ \norm{R_\gl B_{m+\gb_1}}^{1-\gb_1} \norm{R_\gl^\ast \tilde{C}_\gg}^{1-\gg_1} 
+ \norm{R_\gl B_{m+\gb_1}}^{1-\gb_1} \norm{R_\gl^\ast \tilde{C}_{n+\gg_1}}^{1-\gg_1} 
\bigr)
=:f_k^1(\gl).
}
Denoting $q=1/(1-\gb_1)$ and $r=1/(1-\gg_1)$, we have $1/q+1/r=1$, and we can use the scalar inequality $(\sum_{j=1}^4 a_j)^2\leq 4 \sum_{j=1}^4 a_j^2$ for $a_j\geq 0$ to show that $f_k^1(\gl)$ satisfies~\eqref{eq:f1intcond} similarly as in the case~1.4.
\end{proof}

\begin{proof}[Proof of Theorem~\textup{\ref{thm:stabpert}}]
  Let $\gd>0$ be chosen as in Theorem~\ref{thm:specpert} and assume $\norm{B}<\gd$, $\norm{C}<\gd$, and $\norm{(i\gw_k-A)^{-\gb}B}<\gd$, and $\norm{(-i\gw_k-A^\ast)^{-\gg}C^\ast}<\gd$ for all $k\in\Igw$. 
  By Theorem~\ref{thm:specpert} there exists $M_D\geq 1$ such that $\norm{(I-CR(\gl,A)B)\inv}\leq M_D$ for all $\gl\in \overline{\C^+}\setminus \set{i\gw_k}_{k\in\Igw}$. 
  We begin the proof by showing that the semigroup generated by $A+BC$ is uniformly bounded.

Let $x\in X$ and
for brevity denote $R_\gl = R(\xi+i\eta,A)$ and $D_\gl = I-CR(\xi+i\eta,A)B$. 
Using Shermann--Morrison--Woodbury formula in Lemma~\ref{lem:ShermanMorrisonWoodbury} and the scalar inequality $(a+b)^2\leq 2 (a^2+b^2)$ for $a,b\geq 0$
we get
\eq{
\MoveEqLeft \sup_{\xi>0}\; \xi \int_{-\infty}^\infty \norm{R(\xi+i\eta,A+BC)x}^2 d\eta
= \sup_{\xi>0}\;\xi \int_{-\infty}^\infty \norm{R_\gl x + R_\gl BD_\gl\inv CR_\gl x}^2 d\eta\\
&\leq 2\sup_{\xi>0}\;\xi \int_{-\infty}^\infty \norm{R_\gl x}^2 + \norm{R_\gl B}^2 \norm{D_\gl\inv}^2 \norm{ CR_\gl}^2 \norm{ x}^2 d\eta\\
&\leq 2\sup_{\xi>0}\;\xi \int_{-\infty}^\infty \norm{R_\gl x}^2 d\eta + 2M_D^2 \norm{x}^2 \sup_{\xi>0}\;\xi \int_{-\infty}^\infty \norm{R_\gl B}^2  \norm{ CR_\gl}^2 d\eta .
}
Similarly, using $\norm{(R_\gl BD_\gl\inv CR_\gl)^\ast} = \norm{R_\gl BD_\gl\inv CR_\gl}\leq M_D\norm{R_\gl B}\norm{CR_\gl}$ we get
\eq{
\MoveEqLeft \sup_{\xi>0}\; \xi \int_{-\infty}^\infty \norm{R(\xi+i\eta,A+BC)^\ast x}^2 d\eta
= \sup_{\xi>0}\;\xi \int_{-\infty}^\infty \norm{R_\gl^\ast x + (R_\gl BD_\gl\inv CR_\gl)^\ast x}^2 d\eta\\
&\leq 2\sup_{\xi>0}\;\xi \int_{-\infty}^\infty \norm{R_\gl^\ast x}^2 d\eta + 2M_D^2 \norm{x}^2 \sup_{\xi>0}\;\xi \int_{-\infty}^\infty \norm{R_\gl B}^2  \norm{ CR_\gl}^2 d\eta 
}
In both cases the first supremums are finite by Theorem~\ref{thm:unifbddconds}.
Therefore, Theorem~\ref{thm:unifbddconds} implies that the semigroup generated by $A+BC$ is uniformly bounded if 
\eqn{
\label{eq:stabpertRBCRint}
\sup_{\xi>0}\;\xi \int_{-\infty}^\infty \norm{R_\gl B}^2  \norm{ CR_\gl}^2 d\eta 
<\infty.
}

For all $k\in\Igw$ let $f_k(\cdot)$ be the functions in Lemma~\ref{lem:RBCRest}. By Lemma~\ref{lem:RbddOcomp} we can choose $M_2\geq 1$ such that $\norm{R(\gl,A)}\leq M_2$ for all $\gl\in \overline{\C^+}\setminus \left( \bigcup_k \Omega_k \right)$.

Let $\xi>0$. For each $k\in\Igw$ denote by $E_k^\xi\subset \R$ the interval such that $\xi+i\eta \in\Omega_k$ if and only if $\eta\in E_k^\xi$. Finally, denote $E^\xi = \R\setminus \left( \bigcup_k E_k^\xi \right)$. Now, since the set $\Igw$ of indices is finite, we have
\eq{
\MoveEqLeft\xi \int_{-\infty}^\infty \norm{R_\gl B}^2  \norm{ CR_\gl}^2 d\eta 
= \xi \int_{E^\xi} \norm{R_\gl B}^2  \norm{ CR_\gl}^2 d\eta 
+ \sum_{k\in\Igw} \xi \int_{E_k^\xi} \norm{R_\gl B}^2  \norm{ CR_\gl}^2 d\eta \\
&\leq \xi \int_{E^\xi} M_2^2 \norm{B}^2  \norm{ CR_\gl}^2 d\eta 
+ \sum_{k\in\Igw} \xi \int_{E_k^\xi} f_k(\xi+i\eta)^2 d\eta \\
&\leq M_2^2 \norm{B}^2 \xi \int_{-\infty}^\infty   \norm{ CR_\gl}^2 d\eta 
+ \sum_{k\in\Igw} \xi \int_{-\infty}^\infty f_k(\xi+i\eta)^2 d\eta \\
&\leq M_2^2 \norm{B}^2 \sup_{\xi>0}\;\xi \int_{-\infty}^\infty   \norm{ CR_\gl}^2 d\eta 
+ \sum_{k\in\Igw} \sup_{\xi>0}\;\xi \int_{-\infty}^\infty f_k(\xi+i\eta)^2 d\eta <\infty
}
by Lemmas~\ref{lem:BCfinrankint}~and~\ref{lem:RBCRest}. Since the bound is independent of $\xi>0$, this shows that~\eqref{eq:stabpertRBCRint} is satisfied, and thus concludes that the semigroup generated by $A+BC$ is uniformly bounded.

Since the perturbed semigroup is uniformly bounded and $X$ is a Hilbert space, the Mean Ergodic Theorem~\cite{arendtbatty} implies that $\gs(A+BC)\cap i\R\subset \gs_p(A+BC)\cup \gs_c(A+BC)$. However, by Theorem~\ref{thm:specpert} we have that
$i\gw_k\notin\gs_p(A+BC)$ for all $k\in \Igw$. This concludes that $i\gw_k\in \gs_c(A+BC)\cup \rho(A+BC)$ for all $k\in\Igw$.

Theorem~\ref{thm:specpert} shows that $\gs(A+BC)\cap i\R \subset \set{i\gw_k}_{k\in\Igw}$ is countable and $\gs_p(A+BC)\cap i\R = \varnothing$. The Arent--Batty--Lyubich--V\~{u} Theorem~\cite{arendtbattystrongstab,lyubichvu} therefore concludes that the semigroup generated by $A+BC$ is strongly stable.

It remains to show that for all $k\in\Igw$ the resolvent operator $R(\gl,A+BC)$ satisfies
\eqn{
\label{eq:stabpertgrowthest}
\sup_{0<\abs{\gw-\gw_k}\leq \eps_A} \abs{\gw-\gw_k}^\ga \norm{R(i\gw,A+BC)}<\infty.
} 
To this end, let $k\in \Igw$ be arbitary. By Lemma~\ref{lem:RBCRest} there exists $M_k\geq 1$ such that $\abs{\gw-\gw_k}^\ga f_k(i\gw)\leq M_k$ whenever $0<\abs{\gw-\gw_k}\leq\eps_A$. 
The Shermann--Morrison--Woodbury formula in Lemma~\ref{lem:ShermanMorrisonWoodbury} implies that for all $\gw\in\R$ satisfying $0<\abs{\gw-\gw_k}\leq \eps_A$ we have
\eq{
\MoveEqLeft\norm{R(i\gw,A+BC)}
=\norm{R(i\gw,A)  + R(i\gw,A) B(I-CR(i\gw,A)B)\inv CR(i\gw,A)}\\
&\leq\norm{R(i\gw,A)}  + \norm{R(i\gw,A) B} \norm{(I-CR(i\gw,A)B)\inv} \norm{CR(i\gw,A)} \\
&\leq\norm{R(i\gw,A)}  + M_D f_k(i\gw ),
}
and thus
\eq{
\abs{\gw-\gw_k}^\ga \norm{R(i\gw,A+BC)}
&\leq\abs{\gw-\gw_k}^\ga\norm{R(i\gw,A)}  + M_D \abs{\gw-\gw_k}^\ga  f_k(i\gw )
\leq M_A + M_D M_k.
}
This concludes that~\eqref{eq:stabpertgrowthest} is satisfied. 
On the other hand, if $\abs{\gw-\gw_k}>\eps_A$ for all $k\in\Igw$, then
\eq{
\MoveEqLeft\norm{R(i\gw,A+BC)}
\leq\norm{R(i\gw,A)}  + \norm{R(i\gw,A) B} \norm{(I-CR(i\gw,A)B)\inv} \norm{CR(i\gw,A)} \\
&\leq\norm{R(i\gw,A)}  + M_D \norm{B}\norm{C}\norm{R(i\gw,A)}^2
\leq M_A+M_D \norm{B} \norm{C} M_A^2,
}
and thus $\norm{R(i\gw,A+BC)}$ is uniformly bounded for $\gw\in\R$ satisfying $\abs{\gw-\gw_k}>\eps_A$ for all $k\in\Igw$.
This concludes the proof.  
\end{proof}

\section{Preservation of Polynomial Stability}
\label{sec:polpert}

In this section we prove Theorem~\ref{thm:Apolpert}, which gives conditions for the preservation of polynomial stability of a semigroup under finite rank perturbations.

\begin{proof}[{Proof of Theorem~\textup{\ref{thm:Apolpert}}}]
  If $\gb=0$ or $\gg=0$, the claim follows directly from~\cite[Thm. 5]{Pau12}. We can therefore assume $\gb,\gg>0$.

  Choose $\gd = \sqrt{\gd_1}>0$, where $\gd_1>0$ is chosen as in \cite[Cor. 7]{Pau13a}. Assume $B\in \Lin(\C^p,X)$ and $C\in \Lin(X,\C^p)$ satisfy~\eqref{eq:BCpolstabrancond} and $\norm{(-A)^\gb B}<\gd$  and $\norm{(-A^\ast)^{\gg} C^\ast}<\gd$. Since $\norm{(-A)^\gb B}\cdot\norm{(-A^\ast)^{\gg} C^\ast}<\gd_1$, we have from \cite[Cor. 7]{Pau13a} that $\gs(A+BC)\subset \C^-$, $1\in \rho(CR(\gl,A)B)$ for all $\gl\in \overline{\C^+}$,  and if we denote $D_\gl = I-CR(\gl,A)B$, then there exists $M_D\geq 1$ such that
  \eq{
  \sup_{\gl\in \overline{\C^+}} \norm{D_\gl\inv}\leq M_D<\infty.
  }
  Since $A$ generates a polynomially stable semigroup, Theorem 2.4 and Lemma 2.3 in~\cite{BorTom10} show that we can choose $M_R\geq 1$ in such a way that $\norm{R(\gl,A)(-A)^{-\ga}}\leq M_R$ for all $\gl\in \overline{\C^+}$.
  
  We begin by showing that 
  \eqn{
  \label{eq:polRBCRint}
  \sup_{\xi>0} \; \xi\int_{-\infty}^\infty \norm{R(\xi+i\eta,A)B}^2 \norm{CR(\xi+i\eta,A)}^2 d\eta<\infty
  }
  and 
  \eqn{
  \label{eq:polRBCRpolbdd}
  \norm{R(i\gw,A)B} \norm{CR(i\gw,A)}=\Omi(\abs{\gw}^\ga).
  } 
  To this end, choose $0<\gb_1\leq \gb$ and $0<\gg_1\leq \gg$ such that $\gb_1+\gg_1=\ga$.
  Let $\gl\in \overline{\C^+}$ and denote $R_\gl = R(\gl,A)$, $B_{\gb_1} = (-A)^{\gb_1} B$, and $\tilde{C}_{\gg_1} =(-A^\ast)^{\gg_1} C^\ast$. 
  The Moment Inequality~\cite[Prop. 6.6.4]{haasefuncalc} implies that there exist constants $M_{\gb_1/\ga}$ and $M_{\gg_1/\ga}$ such that
  \eq{
  \norm{R_\gl B}
  &=\norm{R_\gl (-A)^{-\gb_1}(-A)^{\gb_1} B}
  =\norm{(-A)^{-\gb_1}R_\gl B_{\gb_1}}\\
  &\leq M_{\gb_1/\ga} \norm{R_\gl B_{\gb_1}}^{1-\gb_1/\ga}\norm{(-A)^{-\ga}R_\gl B_{\gb_1}}^{\gb_1/\ga}\\
  &\leq M_{\gb_1/\ga} \norm{R_\gl B_{\gb_1}}^{1-\gb_1/\ga}\norm{(-A)^{-\ga}R(\gl,A)}^{\gb_1/\ga} \norm{ B_{\gb_1}}^{\gb_1/\ga}\\
  &\leq M_{\gb_1/\ga} \norm{R_\gl B_{\gb_1}}^{1-\gb_1/\ga}M_R^{\gb_1/\ga}  \norm{ B_{\gb_1}}^{\gb_1/\ga}
  }
  and
  \eq{
  \norm{CR_\gl }
  &=\norm{R_\gl^\ast C^\ast}
  =\norm{R_\gl^\ast (-A^\ast)^{-\gg_1}(-A^\ast)^{\gg_1} C^\ast}
  =\norm{(-A^\ast)^{-\gg_1}R_\gl^\ast \tilde{C}_{\gg_1}}\\
  &\leq M_{\gg_1/\ga} \norm{R_\gl^\ast \tilde{C}_{\gg_1}}^{1-\gg_1/\ga}\norm{(-A^\ast)^{-\ga}R_\gl^\ast \tilde{C}_{\gg_1}}^{\gg_1/\ga}\\
  &\leq M_{\gg_1/\ga} \norm{R_\gl^\ast \tilde{C}_{\gg_1}}^{1-\gg_1/\ga}\norm{(-A)^{-\ga}R(\gl,A)}^{\gg_1/\ga} \norm{\tilde{C}_{\gg_1}}^{\gg_1/\ga}\\
  &\leq M_{\gg_1/\ga} \norm{R_\gl^\ast \tilde{C}_{\gg_1}}^{1-\gg_1/\ga} M_R^{\gg_1/\ga} \norm{\tilde{C}_{\gg_1}}^{\gg_1/\ga}.
  }
  If we choose $\tilde{M} = M_{\gb_1/\ga} M_{\gg_1/\ga} M_R^{(\gb_1+\gg_1)/\ga}  \norm{ B_{\gb_1}}^{\gb_1/\ga}\norm{ \tilde{C}_{\gg_1}}^{\gg_1/\ga}$, then
  \eq{
  \norm{R_\gl B} \norm{CR_\gl}
  \leq \tilde{M} \norm{R_\gl B_{\gb_1}}^{1-\gb_1/\ga} \norm{R_\gl^\ast \tilde{C}_{\gg_1}}^{1-\gg_1/\ga}.
  }
 Choose $q=1/(1-\gb_1/\ga)$ and $r=1/(1-\gg_1/\ga)$. Then
 $1/q+1/r
= 2-(\gb_1+\gg_1)/\ga = 1$, and 
 using the Hölder inequality we get
      \eq{
      \MoveEqLeft\sup_{\xi>0}\; \xi\int_{-\infty}^\infty \norm{ R(\xi+i\eta,A) B}^2 \norm{C R(\xi+i\eta,A)}^2 d\eta\\
      &\leq \tilde{M}^2\sup_{\xi>0}\; \xi\int_{-\infty}^\infty \norm{ R(\xi+i\eta,A) B_{\gb_1}}^{2(1-\gb_1/\ga)} \norm{ R(\xi+i\eta,A)^\ast \tilde{C}_{\gg_1}}^{2(1-\gg_1/\ga)} d\eta\\
      &\leq \tilde{M}^2 \left(\sup_{\xi>0}\; \xi\int_{-\infty}^\infty \norm{ R(\xi+i\eta,A) B_{\gb_1}}^2 d\eta \right)^q  \\
      &\qquad\quad \times\left(\sup_{\xi>0}\; \xi\int_{-\infty}^\infty \norm{ R(\xi+i\eta,A)^\ast \tilde{C}_{\gg_1}}^2 d\eta \right)^r
      <\infty
      }
      by Lemma~\ref{lem:BCfinrankint}. This concludes~\eqref{eq:polRBCRint}. Moreover, for $\gw\in\R$ with large $\abs{\gw}$ we have
      \eq{
  \MoveEqLeft\norm{R(i\gw,A) B} \norm{CR(i\gw,A)}
  \leq \tilde{M} \norm{R(i\gw,A) B_{\gb_1}}^{1-\gb_1/\ga} \norm{R(i\gw,A)^\ast \tilde{C}_{\gg_1}}^{1-\gg_1/\ga}  \\
  &\leq \tilde{M} \norm{R(i\gw,A)}^{1-\gb_1/\ga} \norm{ B_{\gb_1}}^{1-\gb_1/\ga} \norm{R(i\gw,A)^\ast}^{1-\gg_1/\ga} \norm{\tilde{C}_{\gg_1}}^{1-\gg_1/\ga}  \\
  &= \tilde{M} \norm{ B_{\gb_1}}^{1-\gb_1/\ga}  \norm{\tilde{C}_{\gg_1}}^{1-\gg_1/\ga}\norm{R(i\gw,A)}^{1-\gb_1/\ga+1-\gg_1/\ga}  
  = \Omi(\abs{\gw}^\ga),
      }
      since $1-\gb_1/\ga +1-\gg_1/\ga = 2-(\gb_1+\gg_1)/\ga = 1$ and $\norm{R(i\gw,A)}=\Omi(\abs{\gw}^\ga)$ by~\cite[Thm. 2.4]{BorTom10}. This concludes~\eqref{eq:polRBCRpolbdd}.

      We can now show that the semigroup generated by $A+BC$ is uniformly bounded.
  Let $x\in X$. 
  Using the Shermann--Morrison--Woodbury formula in Lemma~\ref{lem:ShermanMorrisonWoodbury} 
  we can estimate (exactly as in the proof of Theorem~\ref{thm:stabpert}) 
\eq{
\MoveEqLeft \sup_{\xi>0}\; \xi \int_{-\infty}^\infty \norm{R(\xi+i\eta,A+BC)x}^2 d\eta
= \sup_{\xi>0}\;\xi \int_{-\infty}^\infty \norm{R_\gl x + R_\gl BD_\gl\inv CR_\gl x}^2 d\eta\\
&\leq 2\sup_{\xi>0}\;\xi \int_{-\infty}^\infty \norm{R_\gl x}^2 + \norm{R_\gl B}^2 \norm{D_\gl\inv}^2 \norm{ CR_\gl}^2 \norm{ x}^2 d\eta\\
&\leq 2\sup_{\xi>0}\;\xi \int_{-\infty}^\infty \norm{R_\gl x}^2 d\eta + 2M_D^2 \norm{x}^2 \sup_{\xi>0}\;\xi \int_{-\infty}^\infty \norm{R_\gl B}^2  \norm{ CR_\gl}^2 d\eta 
<\infty
}
due to Theorem~\ref{thm:unifbddconds} and~\eqref{eq:polRBCRint}.
Analogously, we have
\eq{
\MoveEqLeft \sup_{\xi>0}\; \xi \int_{-\infty}^\infty \norm{R(\xi+i\eta,A+BC)^\ast x}^2 d\eta\\
&\leq 2\sup_{\xi>0}\;\xi \int_{-\infty}^\infty \norm{R_\gl^\ast x}^2 d\eta + 2M_D^2 \norm{x}^2 \sup_{\xi>0}\;\xi \int_{-\infty}^\infty \norm{R_\gl B}^2  \norm{ CR_\gl}^2 d\eta 
<\infty
}
again due to Theorem~\ref{thm:unifbddconds} and~\eqref{eq:polRBCRint}.
Since $x\in X$ was arbitrary, Theorem~\ref{thm:unifbddconds} concludes that the semigroup generated by $A+BC$ is uniformly bounded.

Finally, the Shermann-Morrison--Woodbury formula in Lemma~\ref{lem:ShermanMorrisonWoodbury} together with~\eqref{eq:polRBCRpolbdd} implies that for $\gw\in\R$ with large $\abs{\gw}$ we have
\eq{
\MoveEqLeft\norm{R(i\gw,A+BC)} = 
\norm{R(i\gw,A) + R(i\gw,A) B(I- CR(i\gw,A)B)\inv C  R(i\gw,A)}\\
&\leq \norm{R(i\gw,A)} + \norm{ R(i\gw,A) B}  \norm{(I- CR(i\gw,A)B)\inv} \norm{C R(i\gw,A) }\\
&\leq \norm{R(i\gw,A)} + M_D\norm{ R(i\gw,A) B} \norm{C R(i\gw,A) }
= \Omi(\abs{\gw}^\ga).
}
By Theorem~2.4 in~\cite{BorTom10} this concludes that the semigroup generated by $A+BC$ is polynomially stable with exponent $\ga$.
\end{proof}

\section{Perturbation of a Strongly Stable Multiplication Semigroup}
\label{sec:multex}

In this section we apply our theoretic results in considering the preservation of strong stability of a multiplication 
semigroup~\cite[Par. II.2.9]{engelnagel}
\eq{
(T_A(t) f)(\mu ) = e^{t\cdot \mu} f(\mu)
}
on $X=\Lp[2](\Omega)$, where $\Omega = \setm{\gl}{\abs{\gl+1}\leq 1}$ is a disk centered at $-1$ and with radius~$1$ (see Figure~\ref{fig:Multexdom}).
The generator $A$ of the semigroup $T_A(t)$ is a bounded multiplication operator  $(A f)(\mu) = \mu f(\mu)$. 

\begin{figure}[ht]
  \begin{center}
    \includegraphics[width=0.4\linewidth]{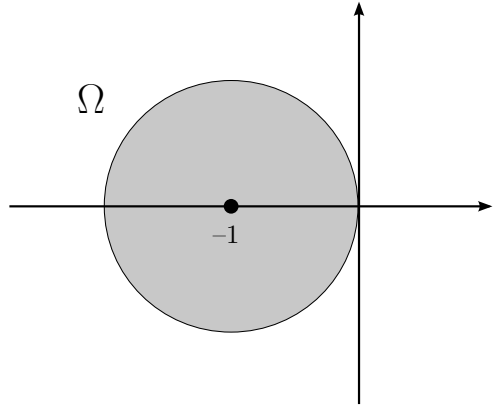}
%    \begin{overpic}[width=200pt]{Multexdomain}
%      \put(14,61){\mbox{\Large$\Omega$}}
%      \put(42,34){\mbox{\small--$1$}}
%    \end{overpic}
  \end{center}
  \caption{The domain $\Omega$.}
  \label{fig:Multexdom}
\end{figure}

The spectrum of $A$ is given by $\gs(A)=\gs_c(A)=\Omega$, $\gs(A)\cap i\R = \set{0}\subset \gs_c(A)$, and the semigroup is uniformly bounded. Due to the Arendt--Batty--Lyubich--V\~{u} Theorem~\cite{arendtbattystrongstab} the semigroup $T_A(t)$ is strongly stable. The operator $-A$ has an unbounded inverse $(-A)\inv$ with domain
\eq{
\Dom( (-A)\inv) = \Bigl\{f\in \Lp[2](\Omega)\, \Bigm| \,\int_\Omega \abs{\mu}^{-2} \abs{f(\mu)}^2 d\mu < \infty\Bigr\}. 
} 

We begin by finding a suitable value for $\ga\geq 1$ in Assumption~\ref{ass:Astandass}. Due to the geometry, for all $\gw\in\R$ with $0<\abs{\gw}\leq 1=:\eps_A$ we have
\eq{
\MoveEqLeft\abs{\gw}^\ga \norm{R(i\gw,A)}
= \frac{\abs{\gw}^\ga}{\dist(i\gw,\Omega)} 
= \frac{\abs{\gw}^\ga}{\dist(i\gw,-1)-1} 
= \frac{\abs{\gw}^\ga}{\sqrt{\gw^2+1}-1} 
= \frac{\abs{\gw}^\ga(\sqrt{\gw^2+1}+1)}{\gw^2+1-1} \\
&= \frac{\abs{\gw}^\ga}{\gw^2}(\sqrt{\gw^2+1}+1)\leq M_A<\infty
}
if and only if $\ga\geq 2$.  Thus we can choose $\ga=2$ in Assumption~\ref{ass:Astandass}.

For $\gb \geq 0$ the domains of the operators $(-A)^{-\gb}$ and $(-A^\ast)^{-\gb}$ are given by 
\eq{
\Dom( (-A)^{-\gb}) = 
\Dom( (-A^\ast)^{-\gb}) = 
\Bigl\{f\in \Lp[2](\Omega)\, \Bigm| \,\int_{\Omega} \abs{\mu}^{-2\gb} \abs{f(\mu)}^2 d\mu <\infty\Bigr\}.
} 
If $0<\tilde{\gb}<\gb$ and $f\in \Dom( (-A)^{-\gb})$, then the Hölder inequality with exponents $q=1/(1-\tilde{\gb}/\tilde{\gb})$ and $r=1/(\tilde{\gb}/\gb)$ implies
\eq{
\MoveEqLeft \norm{(-A)^{-\tilde{\gb}}f}^2
=  \int_{\Omega} \abs{\mu}^{-2\tilde{\gb}} \abs{f(\mu)}^2 d\mu
=  \int_{\Omega} \abs{f(\mu)}^{2(1-\tilde{\gb}/\gb)} \cdot (\abs{\mu}^{-2\gb} \abs{f(\mu)}^2)^{\tilde{\gb}/\gb} d\mu\\
&\leq \left(  \int_{\Omega} \abs{f(\mu)}^{2} d\mu \right)^{1-\tilde{\gb}/\gb}  \left( \int_{\Omega}\abs{\mu}^{-2\gb} \abs{f(\mu)}^2 d\mu \right)^{\tilde{\gb}/\gb},
}
or equivalently $\norm{(-A)^{-\tilde{\gb}}f}\leq \norm{f}^{1-\tilde{\gb}/\gb}\norm{(-A)^{-\gb}f}^{\tilde{\gb}/\gb}$. In particular, this shows that if $\norm{B}<\gd$ and $\norm{(-A)^{-\gb}B}<\gd$, then for all $0<\tilde{\gb}<\gb$ we have
\eqn{
\label{eq:multexMoment}
\norm{(-A)^{-\tilde{\gb}}B}
\leq \norm{B}^{1-\tilde{\gb}/\gb}\norm{(-A)^{-\gb}B}^{\tilde{\gb}/\gb}
< \gd^{1-\tilde{\gb}/\gb} \gd^{\tilde{\gb}/\gb}
=\gd,
}
and similarly for $\norm{(-A^\ast)^{-\tilde{\gg}}C^\ast}$ with $0<\tilde{\gg}<\gg$.

We consider bounded finite rank perturbations $A+BC$.
Since the operator $A$ is bounded, we can approach the preservation of the strong stability of $T_A(t)$ more directly than in the proof of Theorem~\ref{thm:stabpert}. In particular, if $0<c<1$, then the theory presented in the earlier sections shows that the strong stability is preserved for all $B$ and $C$ for which
\eq{
\norm{CR(\gl,A)B}\leq c<1
}
for all $\gl\in \overline{\C^+}\setminus \set{0}$. In particular, if $\gb,\gg\geq 0$ are such that $\gb+\gg=2$ and Assumption~\ref{ass:Astandass} is satisfied, then 
\eq{
\MoveEqLeft\norm{CR(\gl,A)B}
= \sup_{\norm{x}=\norm{y}=1} \abs{\iprod{R(\gl,A)Bx}{C^\ast y}}\\
&= \sup_{\norm{x}=\norm{y}=1} \abs{\iprod{(-A)^2R(\gl,A)(-A)^{-\gb}Bx}{(-A^\ast)^{-\gg} C^\ast y}}\\
&\leq \norm{(-A)^2 R(\gl,A)}\norm{(-A)^{-\gb}B} \norm{(-A^\ast)^{-\gg}C^\ast}
\leq c<1
}
if $\norm{(-A)^{-\gb}B}<\sqrt{c/M_1}$ and $\norm{(-A^\ast)^{-\gg}C^\ast}<\sqrt{c/M_1}$ 
where $M_1\geq 1$ is such that
$\norm{(-A)^2 R(\gl,A)}\leq M_1$ for all $\gl\in \overline{\C^+}\setminus\set{0}$. In the following we will search for a suitable constant $M_1\geq 1$.

If $\gl\in \overline{\C^+}\setminus\set{0}$ is such that $\abs{\gl}\geq 2$, then
 for all $f\in X$ with $\norm{f}=1$ we have
\eq{
\MoveEqLeft\norm{(-A)^2 R(\gl,A)f}
=\left(  \int_{\Omega} \frac{\abs{\mu}^4 \abs{f(\mu)}^2}{\abs{\gl-\mu}^2}d\mu \right)^{1/2}
\leq \sup_{\mu\in\Omega} \frac{\abs{\mu}^2 }{\abs{\gl-\mu}}
\left( \int_{\Omega} \abs{f(\mu)}^2d\mu \right)^{1/2}\\
&= \sup_{\mu\in\Omega} \frac{\abs{\mu}^2 }{\abs{\gl-\mu}}
\leq \sup_{\mu\in\Omega} \frac{4 }{\abs{\gl-\mu}}
= \frac{4 }{\dist(\gl,\Omega)}
=\frac{4 }{\abs{\gl+1}-1}
\leq 4,
}
since $\abs{\gl+1}-1\geq \abs{\gl}-1\geq 2-1=1$. On the other hand, if $\gl\in \overline{\C^+}\setminus\set{0}$ and $\abs{\gl}< 2$, then
\eq{
\MoveEqLeft\norm{(-A)^2 R(\gl,A)}
=\norm{(-A)(I-\gl R(\gl,A))}
\leq\norm{A}+\abs{\gl}\norm{(-A) R(\gl,A)}\\
&\leq\norm{A}+\abs{\gl}+\abs{\gl}^2\norm{ R(\gl,A)}
\leq 2+2+\abs{\gl}^2 \dist(\gl,\Omega)^{-1}
}
where
\eq{
\MoveEqLeft\frac{\abs{\gl}^2 }{\dist(\gl,\Omega)}
=\frac{\abs{\gl}^2 }{\abs{\gl+1}-1}
=\frac{\abs{\gl}^2(\abs{\gl+1}+1) }{\abs{\gl+1}^2-1}
\leq\frac{\abs{\gl}^2(\abs{\gl}+2) }{(\re\gl+1)^2 +(\im\gl)^2-1}\\
&\leq\frac{4\abs{\gl}^2 }{(\re\gl)^2 +\re\gl +(\im\gl)^2}
\leq\frac{4\abs{\gl}^2 }{(\re\gl)^2  +(\im\gl)^2}
=4.
}
Together these estimates imply that if we choose $M_1 = 8$, then $\norm{(-A)^2R(\gl,A)}\leq M_1$ for all $\gl\in \overline{\C^+}\setminus \set{0}$.

In order to guarantee $0\notin \gs_p(A+BC)$, the choice for $\gd>0$ must satisfy the conditions of Lemma~\ref{lem:ABCinj}. 
The proof of the lemma shows that the appropriate condition for $B$ and $C$ is that $\norm{(-A)^{-\gb_1}B}\cdot \norm{(-A^\ast)^{-\gg_1}C^\ast}<1$ for some $0\leq \gb_1\leq \gb$ and $0\leq \gg_1\leq \gg$ satsifying $\gb_1+\gg_1=1$. Due to the property~\eqref{eq:multexMoment}, this is true whenever $\gd\leq 1$ and the perturbation satisfies $\norm{B}<\gd$, $\norm{C}<\gd$, $\norm{(-A)^{-\gb} B}<\gd$, and $\norm{(-A^\ast)^{-\gg}C^\ast}<\gd$.

Together the above properties conclude that we can choose, for example, $c=4/5<1$ and $\gd = \sqrt{c/M_1}=1/\sqrt{10}$. In particular, the bound is independent of the values of $\gb$ and~$\gg$, as long as they satisfy $\gb+\gg=2$.
As in Theorem~\ref{thm:stabpert} we can now conclude that if $B$ and $C$ are such that for $\gb+\gg=2$ we have $\norm{B}<\gd$, $\norm{C}<\gd$, $\norm{(-A)^{-\gb} B}<\gd$, and $\norm{(-A^\ast)^{-\gg}C^\ast}<\gd$, then the semigroup generated by $A+BC$ is strongly stable.
In particular,
$\norm{CR(\gl,A)B}\leq 4/5<1$ for all $\gl\in \overline{\C^+}\setminus \set{0}$ and $\sup_{\abs{\gw}\leq 1} \abs{\gw}^\ga \norm{R(i\gw,A+BC)}<\infty$.

For rank one perturbations we have $C f = \iprod{f}{c}_{\Lp[2]}$ for a function $c \in \Lp[2](\Omega)$ and $B = b(\cdot) \in \Lp[2](\Omega)$. The perturbed semigroup is strongly stable if $\norm{b}_{\Lp[2]}<1/\sqrt{10}$, $\norm{c}_{\Lp[2]}<1/\sqrt{10}$, 
\eq{
\int_{\Omega} \abs{\mu}^{-2\gb}\abs{b(\mu)}^2 d\mu < \frac{1}{10},
\qquad \mbox{and} \qquad
\int_{\Omega} \abs{\mu}^{-2\gg}\abs{c(\mu)}^2 d\mu < \frac{1}{10}
}
for some $\gb,\gg\geq 0$ satisfying $\gb+\gg=2$.

\section{Conclusions}
\label{sec:conclusions}

In this paper we have studied the preservation of strong stability of a semigroup whose generator has spectrum on the imaginary axis. We have shown that if the growth of the resolvent operator is polynomial near the spectral points $i\gw_k$, then the stability of the semigroup is indeed robust with respect to classes of finite rank perturbations. 

The results concerning the change of the spectrum of $A$ are also valid in the case where the operator $A$ has an infinite number of uniformly separated spectral points on the imaginary axis, and they can also be applied for perturbations that are not of finite rank. However, the additional standing assumptions were required to show the preservation of the uniform boundedness of the semigroup. Therefore, generalizing the conditions on the preservation of uniform boundedness would also immediately improve the results on the preservation of strong stability.

%\bibliographystyle{plain}
%\bibliography{../../../reference}

\begin{thebibliography}{10}

\bibitem{arendtbattystrongstab}
W.~Arendt and C.~J.~K. Batty.
\newblock Tauberian theorems and stability of one-parameter semigroups.
\newblock {\em Trans. Amer. Math. Soc.}, 306:837--841, 1988.

\bibitem{arendtbatty}
Wolfgang Arendt, Charles J.~K. Batty, Matthias Hieber, and Frank Neubrander.
\newblock {\em Vector-Valued Laplace Transforms and Cauchy Problems}.
\newblock Birkh{\"a}user, Basel, 2001.

\bibitem{BorTom10}
Alexander Borichev and Yuri Tomilov.
\newblock Optimal polynomial decay of functions and operator semigroups.
\newblock {\em Math. Ann.}, 347(2):455--478, 2010.

\bibitem{caramanrobstrongstab}
S\^{a}nziana Caraman.
\newblock Robust stability of compact {$C_0$}-semigroups on {B}anach spaces.
\newblock {\em Studia Univ. "Babe\c{s}-Bolyai", Mathematica}, XLVIII(2):35--37,
  June 2003.

\bibitem{engelnagel}
Klaus-Jochen Engel and Rainer Nagel.
\newblock {\em One-Parameter Semigroups for Linear Evolution Equations}.
\newblock Springer-Verlag, New York, 2000.

\bibitem{gomilkounifbdd}
A.M. Gomilko.
\newblock Conditions on the generator of a uniformly bounded ${C}_0$-semigroup.
\newblock {\em Funct. Anal. Appl.}, 33(4):294--296, 1999.

\bibitem{haasefuncalc}
Markus Haase.
\newblock {\em Functional Calculus for Sectorial Operators}.
\newblock Birkh{\"a}user, 2006.

\bibitem{lyubichvu}
Yu.~I. Lyubich and V{\~u}~Qu{\`o}c Ph{\^o}ng.
\newblock Asymptotic stability of linear differential equations in {B}anach
  spaces.
\newblock {\em Studia Mathmatica}, 88:37--42, 1988.

\bibitem{Pau12}
L.~Paunonen.
\newblock Robustness of strong and polynomial stability of semigroups.
\newblock {\em J. Funct. Anal.}, 263:2555--2583, 2012.

\bibitem{Pau13a}
L.~Paunonen.
\newblock Robustness of polynomial stability with respect to unbounded
  perturbations.
\newblock {\em Systems Control Lett.}, 62:331--337, 2013.

\end{thebibliography}

\end{document}